\newcounter{bullet}
\newtheorem{thm}{Theorem}[section]
\newtheorem{prop}[thm]{Proposition}
\newtheorem{lem}[thm]{Lemma}
\newtheorem{conj}[thm]{Conjecture}
\theoremstyle{definition}
\newtheorem{mydef}[thm]{Definition}
\newtheorem{claim}[thm]{Claim}
\newtheorem{remark}[thm]{Remark}
\crefname{lem}{lemma}{lemmas}
\crefname{thm}{theorem}{theorems}
\newcommand{\gO}{\Omega}
\newcommand{\gl}{\lambda}
\newcommand{\Z}{\mathbb{Z}}
\newcommand{\cO}{\mathcal{O} }
\newcommand{\cC}{\mathcal{C} }
\newcommand{\cD}{\mathcal{D} }
\newcommand{\cE}{\mathcal{E} }
\newcommand{\cF}{\mathcal{F} }
\newcommand{\cG}{\mathcal{G} }
\newcommand{\cI}{\mathcal{I} }
\newcommand{\cM}{\mathcal{M} }
\newcommand{\cP}{\mathcal{P} }
\newcommand{\cU}{\mathcal{U} }
\newcommand{\cV}{\mathcal{V} }
\newcommand{\cW}{\mathcal{W} }
\newcommand{\beq}[1]{\begin{equation}\label{#1}}
\newcommand{\enq}[0]{\end{equation}}
\newcommand{\eps}{\epsilon}
\newcommand{\gd}[0]{\delta }
\newcommand{\nin}[0]{\noindent}
\newcommand{\sub}[0]{\subseteq}
\newcommand{\lam}{\lambda}
\renewcommand{\eta}{\left(\left(\frac{q}{2}\right)^2\right)}
\newcommand{\qr}{\gamma}
\begin{document}

\title{A refined graph container lemma and applications to the hard-core model on bipartite expanders}

\author[M. Jenssen]{Matthew Jenssen}
\address{Department of Mathematics, King's College London}
\email{matthew.jenssen@kcl.ac.uk}
\author[A. Malekshahian]{Alexandru Malekshahian}
\address{Mathematical Institute, University of Oxford}
\email{alex.malekshahian@maths.ox.ac.uk}
\author[J. Park]{Jinyoung Park}
\address{Department of Mathematics, Courant Institute of Mathematical Sciences, New York University}
\email{jinyoungpark@nyu.edu}

\begin{abstract}
    We establish a refined version of a graph container lemma due to Galvin and discuss several applications related to the hard-core model on bipartite expander graphs.  Given a graph $G$ and $\lam>0$, the hard-core model on $G$ at activity $\lambda$ is the probability distribution $\mu_{G,\lam}$ on independent sets in $G$ given by $\mu_{G,\lam}(I)\propto \lam^{|I|}$. As one of our main applications, we show that the hard-core model at activity $\lambda$ on the hypercube $Q_d$ exhibits a `structured phase' for $\lambda= \Omega( \log^2 d/d^{1/2})$ in the following sense: in a typical sample from $\mu_{Q_d,\lam}$, most vertices are contained in one side of the bipartition of $Q_d$. This improves upon a result of Galvin which establishes the same for $\lam=\Omega(\log d/ d^{1/3})$. As another application, we establish a fully polynomial-time approximation scheme (FPTAS) for the hard-core model on a $d$-regular bipartite $\alpha$-expander, with $\alpha>0$ fixed, when $\lambda= \Omega( \log^2 d/d^{1/2})$. This improves upon the bound $\lam=\Omega(\log d/ d^{1/4})$ due to the first author, Perkins and Potukuchi. 
     We discuss similar improvements to results of Galvin-Tetali, Balogh-Garcia-Li and Kronenberg-Spinka. 
\end{abstract}

\maketitle

\section{Introduction}
Given a graph $G$, let $\mathcal{I}(G)$ denote the collection of independent sets in $G$.
 The hard-core model on $G$ at activity $\lambda>0$ is the probability distribution on $\mathcal I(G)$ given by
 \begin{align}\label{eqHCUnidef}
 \mu_{G,\lambda}(I)=\frac{\lambda^{|I|}}{Z_G(\lambda)}
 \end{align}
for $I\in \mathcal{I}(G)$, where the normalising constant
 \begin{align}\label{eqHCPartdef}
Z_G(\lambda)= \sum_{I\in \mathcal{I}(G)}\lambda^{|I|}
\end{align}
is known as the hard-core model partition function. 
The hard-core model originated in statistical physics as a simple model of a gas. 
The vertices of the graph $G$ are to be thought of as `sites' that can be occupied by particles, and neighbouring sites cannot both be occupied.  
This constraint models a system of particles with `hard cores' that cannot overlap. In statistical physics, a major motivation for studying the hard-core model is that it provides a setting where the notion of \emph{phase transition} can be rigorously investigated. In this context, the most common host graph of study is (the nearest neighbour graph on) the integer lattice $\mathbb Z^d$ (see Section~\ref{conclusion} for a more precise discussion of phase transitions and hard-core measures on $\mathbb Z^d$). For now a phase transition can be loosely thought of as follows: as $\lam$ increases, a typical sample from the hard-core model on (a large box in) $\mathbb Z^d$ transitions from being disordered to being structured, in the sense that it prefers vertices from either the odd or even sublattice. 
Motivated by this phenomenon, Kahn~\cite{kahn2001entropy} initiated the study of the hard-core model on the hypercube $Q_d$ (and regular bipartite graphs in general). Here $Q_d$ denotes the graph on vertex set $\{0,1\}^d$ where two vertices are adjacent if and only if they have Hamming distance $1$. $Q_d$ is a bipartite graph with bipartition $\cE \cup \cO$, where $\cE$ {and} $\cO$ consist of the vertices with even {and} odd Hamming weight, respectively. Kahn showed that for fixed $\lam, \eps>0$ and $\mathbf I$ sampled according to $\mu_{Q_d,\lam}$ both
\[
\left||\mathbf I| - \frac{\lam}{1+\lam}2^{d-1} \right|\leq \frac{2^d}{d^{1-\eps}}
\]
and 
\[
\min\{|\mathbf I \cap \cE|, |\mathbf I \cap \cO |\}\leq  \frac{2^d}{d^{1/2-\eps}}
\]
hold whp (that is, with probability tending to $1$ as $d\to\infty$). Roughly speaking, these results show that for $\lam>0$ fixed, a sample from $\mu_{Q_d,\lam}$ resembles a random subset of either $\cE$ or $\cO$ where each element is chosen independently with probability $\lam/(1+\lam)$. In other words, samples from $\mu_{Q_d,\lam}$ exhibit a significant degree of structure. This lies in stark contrast to the regime $\lam\leq c/d$ ($c$ small) where a sample $\mathbf I$ from $\mu_{Q_d,\lam}$ resembles a $\lam/(1+\lam)$-random subset of $\cE \cup \cO$; in particular, $|\mathbf I \cap \cE|=(1+o(1))|\mathbf I \cap \cO|$ whp -- see \cite{weitz}.

Galvin~\cite{galvin_threshold} later refined Kahn's results, showing that the structured regime holds all the way down to $\lam=\tilde \Omega(d^{-1/3})$. More precisely, he showed that there exists $C>0$ so that if $ C \log d/d^{1/3}\leq \lam \leq \sqrt{2}-1$ and $\mathbf I$ is a sample from $\mu_{Q_d,\lam}$, then whp
\[
\left||\mathbf I| - \frac{\lam}{1+\lam}2^{d-1} \right|\leq d \log d \left(\frac{2}{1+\lam} \right)^d
\]
and 
\[
\frac{1}{4 \log m}\frac{\lam}{2}\left(\frac{2}{1+\lam} \right)^d\leq \min\{|\mathbf I \cap \cE|, |\mathbf I \cap \cO |\}\leq em^2\frac{\lam}{2}\left(\frac{2}{1+\lam} \right)^d\, 
\]
for some $m=m(\lam,d)=o(d/\sqrt{\log d})$. Similar bounds hold when $\lam>\sqrt{2}-1$, but they take a slightly different form (see~\cite[Theorem 1.1]{galvin_threshold}). 

 Galvin's proof is based on Sapozhenko's graph container method~\cite{sapocontainer} (see Section~\ref{subsec:containers} for more on the container method). This influential method has now enjoyed numerous applications in the combinatorics literature. The restriction that $\lam=\tilde \Omega(d^{-1/3})$ in Galvin's result is an artifact of a graph container lemma and as a consequence, the same restriction appears in many other applications of similar lemmas~\cite{balogh2021independent,galvin2006slow,struct,jenssen2023approximately,kronenberg2022independent}.

 Galvin's results~\cite{galvin_threshold} were extended by the first author and Perkins~\cite{hypercube}, who combined the graph container method with a method based on the theory of polymer models and cluster expansion from statistical physics. This allows for a very precise description of the hard-core measure on $Q_d$ as a `perturbation' of the measure which selects a side $\cE,\cO$ uniformly at random and then selects a $\lam/(1+\lam)$-subset of that side. As a consequence one can obtain detailed asymptotics for the partition function $Z_{Q_d}(\lam)$ and determine the asymptotic distribution of $|\mathbf I|$ and $\min\{|\mathbf I \cap \cE|, |\mathbf I \cap \cO |\}$. These results rely on Galvin's container lemma and are therefore also limited to the regime $\lam=\tilde\Omega(d^{-1/3})$.

Here we prove a refined graph container lemma (\Cref{ML} below), which allows us to extend these structure theorems to the range $\lam=\tilde \Omega(d^{-1/2})$. One can make similar improvements to several other applications of the graph container method and we discuss these further applications in \Cref{subsec:furtherapp}. 

\begin{thm}\label{thm:main} There exists $C>0$ so that the following holds.  Let $C\log^2d /d^{1/2}\leq \lam \leq \sqrt{2}-1$ and let $\mathbf{I}$ be sampled according to $\mu_{Q_d,\lam}$. Then, with high probability,
 \beq{expbig} \left| |\mathbf{I}| - \frac{\lam}{1+\lam}2^{d-1}\right| \leq \omega(1)\cdot\frac{d\lam^2 2^d}{(1+\lam)^d} \enq
 and
    \beq{expcube} \min\{ |\mathbf{I}\cap \cE|, |\mathbf{I}\cap \cO|\} =(1+o(1))\frac{\lam}{2}\cdot\left(\frac{2}{1+\lam}\right)^d\enq
    where $\omega(1)$ is any function tending to infinity as $d\rightarrow\infty$.
\end{thm}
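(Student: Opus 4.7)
The plan is to follow Galvin's strategy~\cite{galvin_threshold}, substituting the refined container estimate \Cref{ML} for its Sapozhenko-type predecessor---this substitution is precisely what yields the improved range $\lam=\tilde\Omega(d^{-1/2})$. By the $\cE\leftrightarrow\cO$ symmetry of $Q_d$ we may condition on the event $|\mathbf{I}\cap \cO|\leq |\mathbf{I}\cap \cE|$ and set $B:=\mathbf{I}\cap \cO$. Summing over the majority side $\mathbf{I}\cap \cE\sub \cE\sm N(B)$ yields
\[
\pr[\mathbf{I}\cap \cO = B]\propto \lam^{|B|}(1+\lam)^{-|N(B)|}\, ,
\]
and, given $B$, the majority side $\mathbf{I}\cap \cE$ is a $(\lam/(1+\lam))$-Bernoulli subset of $\cE\sm N(B)$. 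The theorem therefore reduces to showing, under this marginal on $B$, that $|B|=(1+o(1))\tfrac{\lam}{2}(2/(1+\lam))^d$ and $|N(B)|=\tilde O(d\lam 2^d/(1+\lam)^d)$ whp.

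The ``Poisson heuristic'' for the marginal on $B$ is that $B$ behaves like a $\lam(1+\lam)^{-d}$-Bernoulli subset of $\cO$: each singleton $\{v\}\sub \cO$ has weight $\lam(1+\lam)^{-d}$, summing over $v\in \cO$ gives $\tfrac{\lam}{2}(2/(1+\lam))^d$, and configurations $B$ with pairwise disjoint neighbourhoods contribute mass consistent with this approximation. The remaining task is to show that ``clustered'' configurations (those with $|N(B)|<d|B|$) contribute negligibly. Following Galvin, I would stratify by $(a,g):=(|B|,|N(B)|)$ and apply the container bound \Cref{ML} to each stratum: it yields a family of containers covering all $B$ with $(|B|,|N(B)|)=(a,g)$ at a cost much smaller than the trivial $\binom{|\cO|}{a}(1+\lam)^{-g}$. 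Combined with the vertex isoperimetric inequality of $Q_d$, which forces $g$ close to $da$ unless $a$ is large, the total defect mass $\sum_{a\ge 1,\, g<da}\lam^a\cdot(\text{container mass})\cdot(1+\lam)^{-g}$ should be bounded by $o(1)\cdot\tfrac{\lam}{2}(2/(1+\lam))^d$ throughout $\lam\ge C\log^2d/d^{1/2}$. This will simultaneously establish the concentration of $|B|$ demanded by \eqref{expcube} and the bound $|N(B)|\le d|B|\le \omega(1)\cdot d\lam 2^{d-1}/(1+\lam)^d$ whp.

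For \eqref{expbig}, observe that conditional on $B$, $|\mathbf{I}\cap \cE|$ is Binomial with parameters $|\cE|-|N(B)|$ and $\lam/(1+\lam)$, so a standard Chernoff bound gives
\[
|\mathbf{I}\cap \cE|=\tfrac{\lam}{1+\lam}\cdot 2^{d-1}-\tfrac{\lam}{1+\lam}|N(B)|\pm O\!\left(\sqrt{\lam\cdot 2^d\log d}\right)\quad\text{whp.}
\]
Adding $|\mathbf{I}\cap \cO|=|B|$ and plugging in the above bounds, each error term is comfortably below $\omega(1)\cdot d\lam^2 2^d/(1+\lam)^d$ throughout the range of $\lam$, giving \eqref{expbig}. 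The main obstacle lies in the middle paragraph: aligning the refined container count of \Cref{ML} with the isoperimetric profile of $Q_d$ so that the gain in the container count genuinely cancels the weight $\lam^a(1+\lam)^{-g}$ for every $(a,g)$ with $g<da$, uniformly over both small $a$ (where Galvin's original lemma is tight only down to $\lam=\tilde\Omega(d^{-1/3})$) and large $a$ (where one must separately argue that $|B|$ being this large is exponentially unlikely). Everything else is either Galvin's argument transplanted essentially verbatim or a routine conditional concentration calculation.
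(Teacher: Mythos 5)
Your outer scaffolding (conditioning on the minority side, the marginal $\propto\lam^{|B|}(1+\lam)^{-|N(B)|}$, and the Chernoff/Chebyshev treatment of the majority side for \eqref{expbig}) is fine, but the middle paragraph — which you yourself flag as the main obstacle — is where the actual content of the theorem lives, and the route you sketch does not deliver it. A Galvin-style stratification by $(a,g)$ plus the container bound of \Cref{ML} shows that \emph{clustered} configurations carry negligible weight, but it cannot by itself produce the sharp $(1+o(1))$ asymptotics in \eqref{expcube}: this is exactly the shape of Galvin's own theorem, which only pins $\min\{|\mathbf I\cap\cE|,|\mathbf I\cap\cO|\}$ down to within factors like $1/(4\log m)$ and $em^2$. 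Even after you know that essentially all minority vertices are isolated, the \emph{number} of such vertices is exponentially large and its concentration requires a genuine first- and second-moment computation under the conditional measure; "transplanting Galvin verbatim" does not supply this. The paper's proof instead runs the polymer-model/cluster-expansion machinery of \cite{hypercube,struct}: \Cref{ML} is used only to verify the Koteck\'y--Preiss criterion for polymers of size larger than $d^4$ (small polymers are handled by hypercube isoperimetry alone, which only needs $\lam=\tilde\Omega(1/d)$), one then gets $\|\mu_{Q_d,\lam}-\hat\mu_\lam\|_{TV}=O(e^{-2^d/d^4})$, computes $\mathbb E|\bar\Lambda|$ and $\mathrm{Var}|\bar\Lambda|$ to $(1+o(1))$ precision via the cluster expansion, and concludes by Chebyshev.

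Two further technical points you gloss over. First, \Cref{ML} is a statement about \emph{2-linked} sets $\cG(a,g)$, whereas your set $B$ is an arbitrary union of 2-linked components; to stratify by the global $(a,g)$ of $B$ you must decompose into components and control the recombination of their weights — this bookkeeping is precisely what the polymer/cluster formalism is designed to do, and it is not automatic in your setup. Second, \Cref{ML} requires $g-a\geq\max\{\delta' g/d_Y,\,cd_X/\log^2 d_X\}$, so it simply does not apply to small components (say $|A|\leq d^4$) whose boundary excess can be below $cd/\log^2 d$; those must be handled by a separate isoperimetry-plus-counting argument, as the paper does. So the proposal as written has a genuine gap at its core step; to close it along the paper's lines you would need either the cluster-expansion argument or an explicit second-moment computation for the isolated-vertex count, neither of which your sketch contains.
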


We reiterate that the regime $\lam\geq C \log d/d^{1/3}$ was already treated in detail in~\cite{galvin_threshold,hypercube} and the regime $\lam > \sqrt{2}-1$ was treated in~\cite{galvin_threshold}. \Cref{thm:main} shows that the `structured regime', where a typical independent set is imbalanced, persists all the way down to $\lam=\tilde \Omega(d^{-1/2})$. Theorem~\ref{thm:main} is an easy consequence of a detailed description of $\mu_{Q_d,\lam}$ (see Theorem~\ref{clusterapprox} below) analogous to the one obtained in~\cite{hypercube}.  This structure theorem has several other consequences (many of which are elaborated upon in~\cite{hypercube}), but here we focus on Theorem~\ref{thm:main} as our main application for brevity. 

We describe our new graph container lemma (\Cref{ML}) in the following section, which is our main technical contribution. We then go on to discuss further applications of the lemma. 

\subsection{An improved graph container lemma}\label{subsec:containers}

The container method is a classical tool that has seen widespread use in the context of studying independent sets in graphs. Its roots can be traced back to the work of Kleitman and Winston \cite{kleitman1982number} and Sapozhenko \cite{sapocontainer}. In recent years, the method has been generalized and developed into a powerful approach for studying independent sets in hypergraphs in the celebrated work of Balogh, Morris and Samotij \cite{balogh2015independent} and Saxton and Thomason \cite{saxton2015hypergraph}. This method has enjoyed a wealth of applications in extremal, enumerative and probabilistic combinatorics and beyond.

In this paper, we are interested in the graph container method specialized to the case of bipartite graphs $\Sigma=X \cup 
 Y$. A first result of this type was established by Sapozhenko \cite{sapocontainer}. Sapozhenko's method was elaborated upon by Galvin~\cite{Galvin_independent} and the form of our container lemma is closely modelled after his results (see also \cite{galvin_threshold, galvin2006slow}).  These results have seen numerous applications which we discuss further in~\Cref{subsec:furtherapp}.

We formulate our new container lemma in enough generality to encompass all of the applications outlined in~\Cref{subsec:furtherapp} (not just Theorem~\ref{thm:main}) and with a view to future applications. The statement is somewhat technical and so we first set up some notation.

For $\delta\geq 1$ and $d_Y \le d_X$, we say a bipartite graph $\Sigma=X \sqcup Y$ is \emph{$\delta$-approximately $(d_X, d_Y)$-biregular} if it satisfies 
\[d(v) \in \begin{cases}
    [d_X, \delta d_X] &\forall v\in X, \\
    [\delta^{-1}d_Y, d_Y]  & \forall v\in Y
    \end{cases}\]
where $d(v)$ denotes the degree of $v$.
For the rest of the paper, we assume $\Sigma$ is $\gd$-approximately $(d_X, d_Y)$-biregular. 

For $A \sub X$, the \textit{closure} of $A$ is defined to be  $[A]:=\{x \in X: N(x) \sub N(A)\}.$ A set $S \sub V(\Sigma)$ is \textit{2-linked} if $\Sigma^2[S]$ is connected, where $\Sigma^2$ is the square of $\Sigma$ (i.e. $V(\Sigma^2)=V(\Sigma)$ and two vertices $x,y$ are adjacent in $\Sigma^2$ iff their distance in $\Sigma$ is at most 2). Given $a, g\in\mathbb{N}$, define
\begin{align}\label{eq:GagDef}
\cG(a,g)=\cG(a,g,\Sigma)=\{A \subseteq X \text{ 2-linked }: |[A]|=a \text{ and } |N(A)|=g\},
\end{align}
and set $t\coloneqq g-a$ and $w\coloneqq gd_Y-ad_X$.
The technical-looking definition below is a measurement of the expansion of $\Sigma$.  Given $1\leq \varphi \leq \delta^{-1}d_Y-1$, let
\[m_\varphi=m_\varphi(\Sigma)=\min\{|N(K)|: y \in Y, K \sub N(y), |K| >\varphi\}.\]
The proof of \Cref{ML} crucially relies on the expansion of $\Sigma$, and the lower bounds on $m_\varphi$ and $g-a$ below provide quantification of the expansion that we need. All the graphs that we consider in our applications will satisfy these lower bounds (except in \Cref{algos}, where we require a stronger bound on $g-a$ and we relax the assumption on $m_{\varphi}$).

\begin{lem}\label{ML} Let $d_Y \le d_X$ be sufficiently large integers and let $\gd \ge 1$, $\gd',\gd''>0$. Then there exist $c=c(\gd, \gd',\gd'')>0$ and $C=C(\gd, \gd',\gd'')>0 $ such that the following holds. Let $\Sigma=X\sqcup Y$ be $\delta$-approximately $(d_X, d_Y)$-biregular  such that ${m_\varphi} \ge \delta''\cdot (\varphi d_X)$, where $\varphi=d_Y/(2\gd)$. If $a,g \in \mathbb N$ satisfy $g-a \ge \max\left\{\frac{\delta' g}{d_Y}, \frac{cd_X}{(\log d_X)^2}\right\}$ and $\gl>\frac{C\log^2 d_X}{(d_X)^{1/2}}$, then
\beq{target}\sum_{A \in \cG(a,g)} \gl^{|A|} \le |Y|(1+\gl)^g e^{-(g-a)\log^2 d_X/(6d_X)}.\enq
\end{lem}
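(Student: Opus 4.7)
My plan is to refine the graph container method of Sapozhenko~\cite{sapocontainer} and Galvin~\cite{Galvin_independent,galvin_threshold} to the $\lambda$-weighted setting, using the expansion hypothesis $m_\varphi \ge \delta''(\varphi d_X)$ at the threshold $\varphi = d_Y/(2\delta)$ to obtain a sharper container count than in previous versions. The scheme is to associate to each $A \in \cG(a,g)$ an \emph{approximating pair} $(F,S)$ with $F \subseteq N(A) \subseteq Y$ and $[A] \subseteq S \subseteq X$, so that
\[
\sum_{A \in \cG(a,g)} \lambda^{|A|} \;\le\; \sum_{(F,S)} \sum_{A \mapsto (F,S)} \lambda^{|A|},
\]
and to bound the two factors separately.

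For the construction of $(F,S)$, I would first pick a seed $y_0 \in N(A)$ (contributing the factor $|Y|$), then grow $F$ by a greedy algorithm: iteratively add a vertex $y \in N(A) \setminus F$ until the derived set $S := \{x \in X : |N(x) \setminus F| \le \varphi\}$ satisfies $[A] \subseteq S$. Since $A$ is 2-linked, each new $y$ must lie in the 2-neighborhood of the current $F$, and the expansion hypothesis $m_\varphi$ further restricts the candidate pool at each step. The inclusion $[A] \subseteq S$ at the threshold $\varphi = d_Y/(2\delta)$ follows from the definition of closure combined with the $\delta$-approximate biregularity and the expansion. For each fixed pair $(F,S)$, the inner weighted sum is then at most $\sum_{A \subseteq S} \lambda^{|A|} = (1+\lambda)^{|S|}$, and an edge-counting argument using biregularity yields $|S| \leq g$, producing the $(1+\lambda)^g$ factor in~\eqref{target}.

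The main obstacle, I expect, is the bookkeeping for the greedy algorithm: the number of steps and the number of candidates at each step must be calibrated so that the total container count, combined with the $\lambda^{|A|}$ weights, yields the target decay $e^{-(g-a)\log^2 d_X/(6 d_X)}$. The improvement over Galvin's $\tilde{\Omega}(d^{-1/3})$ regime comes from exploiting the stronger hypothesis $g-a \ge c d_X/\log^2 d_X$: each step can be designed to consume roughly $d_X/\log^2 d_X$ units of the wasted-edge parameter $w = g d_Y - a d_X$ while introducing only $\exp(O(\log^2 d_X))$ choices, a cost compensated by the lower bound $\lambda \ge C\log^2 d_X/d_X^{1/2}$. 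A secondary technical point is propagating the invariants $F \subseteq N(A)$ and $[A] \subseteq S$, together with the weights $\lambda^{|A|}$, correctly through the multi-step construction; this is where the precise form of the expansion hypothesis $m_\varphi \ge \delta''(\varphi d_X)$ plays its role, since weaker expansion would force a coarser threshold and degrade the exponent from $1/2$ back toward Galvin's $1/3$.
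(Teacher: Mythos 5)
Your accounting has no mechanism that can produce the decay factor $e^{-(g-a)\log^2 d_X/(6d_X)}$, and this is a fatal gap rather than bookkeeping. In your scheme the inner sum over $A\mapsto (F,S)$ is bounded by $(1+\lambda)^{|S|}\le(1+\lambda)^g$, and the outer sum over containers multiplies this by a count that is at least $1$; the product can therefore never beat $|Y|(1+\lambda)^g$, whereas the target is smaller than this by an exponential factor. The container count is a \emph{cost}, not a source of savings, so "calibrating the greedy algorithm" cannot rescue the argument. In the actual proof the decay comes from the reconstruction step inside a container, in one of two ways: (i) if $F$ misses a substantial part of $N(A)$ (say $g-|F|\ge t/\sqrt{d_X}$ with $t=g-a$), then a sufficiently fine approximating pair forces $|S|\le g-0.9(g-|F|)$, so the inner sum is $(1+\lambda)^{g}(1+\lambda)^{-\Omega(t/\sqrt{d_X})}$; (ii) if $F$ nearly fills $N(A)$, one instead specifies $N(A)\setminus F$ inside $N(S)$ (a binomial cost of roughly $\binom{\delta d_X^2 g}{\le 2t/\sqrt{d_X}}$), which determines $[A]$, and then pays only $(1+\lambda)^{a}=(1+\lambda)^g(1+\lambda)^{-t}$ for $A\subseteq[A]$. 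Your single coarse pass at threshold $\varphi=d_Y/(2\delta)$ supplies neither: with such a large threshold the derived $S$ can have size essentially $g$ or more (in general only $|S|\le d_X g$ holds; the edge-counting inequality is $|S|\le |F|+\frac{1}{d_X}[(g-|F|)\psi_Y+(|S|-a)\psi_X]$, which does not give $|S|\le g$, let alone $|S|\le g-\Omega(t)$), so $(1+\lambda)^{|S|}$ yields no gain over $(1+\lambda)^g$.

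You are also missing the idea that actually pushes the threshold from $\tilde\Omega(d_X^{-1/3})$ to $\tilde\Omega(d_X^{-1/2})$, and your attribution of the improvement to the hypothesis $g-a\ge cd_X/\log^2 d_X$ is not correct (that hypothesis only keeps error terms bounded away from zero). The paper's proof is two-stage: a $\varphi$-approximation at $\varphi=d_Y/(2\delta)$ (this is where $m_\varphi\ge\delta''\varphi d_X$ is used, and only there), followed by $(\psi_X,\psi_Y)$-approximations with separate cutoffs, and crucially these are \emph{iterated}: $\psi_X$ is halved at each step, $\psi^X_i=d_X/(2^i\gamma)$, and the refinement is re-run only on containers with $|F|\ge g-t/2^i$. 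For such containers the re-run is cheap --- its cost is governed by $(g-|F|)/\psi^X_{i+1}$ greedy steps, giving only $\exp\bigl(O(t\log(gd_X^2/t)/d_X)\bigr)$ extra choices per iteration --- so after $\kappa\approx\tfrac12\log_2 d_X$ rounds one either has $g-|S|\gtrsim t/\sqrt{d_X}$ (case (i) above) or $g-|F|\le 2t/\sqrt{d_X}$ (case (ii)), and in both cases the savings $e^{-\bar\lambda t/(3\sqrt{d_X})}$ beat the accumulated container cost $\exp(O(t\log^2 d_X/d_X))$ precisely when $\lambda\gtrsim\log^2 d_X/\sqrt{d_X}$. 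A single-pass construction of the kind you describe, however its parameters are tuned, faces the same fineness-versus-count trade-off as Galvin's argument and lands back at $\lambda=\tilde\Omega(d_X^{-1/3})$; without the iteration (or some substitute for it) and without a reconstruction step that exploits $(1+\lambda)^{-t}$ or $(1+\lambda)^{-(g-|S|)}$, the claimed bound cannot be reached.
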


We note that the main contribution of the above lemma is the improved lower bound on $\gl$, which was previously (essentially\footnote{Galvin considers regular graphs only, so that $d_X=d_Y=d$.}) $\tilde\gO(d_X^{-1/3})$ in \cite{galvin_threshold}. As is typical with existing results of this type, the proof of \Cref{ML} consists of two parts: an algorithmic procedure for constructing graph containers efficiently, and a `reconstruction' argument that allows us to bound the sum on the left hand-side of \eqref{target} given the family of containers we have constructed. The main driving force behind our improvement in the range of $\lam$ is a novel approach to the {container} construction algorithm, \Cref{heart}.

We conjecture that a bound of the type given in \Cref{ML} should hold for $\lam=\Tilde{\Omega}(1/d_X)$.
\begin{conj}\label{conj}
    There exists a constant $\kappa$ and a function $\lam^\ast:\mathbb N\to\mathbb R$ with $\lam^\ast(d)=\Tilde{\Omega}(1/d)$ as $d\to\infty$ such that, under the assumptions of \Cref{ML}, if $\lam\geq\lam^\ast(d_X)$, then
    \[ \sum_{A\in\cG(a, g)}\lam^{|A|} \leq |Y| (1+\lam)^g \exp \left\{ -(g-a)/d_X^{\kappa}\right\}.\]
\end{conj}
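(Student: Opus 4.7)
The plan is to follow the bipartite container paradigm of Sapozhenko and Galvin. For each 2-linked $A\in\cG(a,g)$ I would construct a compact ``fingerprint'' $(F,S)$, with $F\subseteq Y$ approximating $N(A)$ and $S\subseteq X$ approximating the closure $[A]$, in such a way that $A$ is determined by $(F,S)$ up to a set whose $\lambda^{|A|}$-weighted count is bounded by $(1+\lambda)^{|[A]|}=(1+\lambda)^a$. Writing $\phi\colon A\mapsto (F,S)$ for this encoding, the proof reduces to the container-count estimate
\[
\bigl|\phi(\cG(a,g))\bigr| \;\le\; |Y|\,(1+\lambda)^{g-a}\, e^{-(g-a)\log^2 d_X/(6d_X)}.
\]

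First I would build $(F,S)$ by an iterative algorithm seeded at a single vertex of $Y$, which supplies the $|Y|$ prefactor. Using the 2-linkedness of $A$, the exploration walks along a spanning structure of $N(A)$ in $\Sigma^2$. At each step the algorithm places a vertex $y\in Y$ into $F$ automatically whenever enough of its $d_Y$ neighbors have already been certified to lie in $S$; otherwise $y$ is recorded as an ``exception'' that must be written down explicitly at cost $\approx \log d_X$ bits. The expansion hypothesis $m_\varphi \ge \delta''\varphi d_X$ with $\varphi = d_Y/(2\delta)$, together with $\delta$-approximate biregularity and $g-a\ge \delta' g/d_Y$, guarantees that the total neighborhood-expansion budget $w=gd_Y-ad_X$ is $O(t d_Y)$, so that the number of exceptions can be charged against $t=g-a$.

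The threshold $\lambda\gtrsim\log^2 d_X/d_X^{1/2}$ arises from the following accounting. The surplus $(1+\lambda)^t \approx e^{t\lambda}$ on the right-hand side provides a ``budget'' of roughly $t\lambda/\log d_X$ exceptional vertices that the container count is allowed to enumerate explicitly, after also absorbing the savings $e^{-t\log^2 d_X/(6 d_X)}$. If the new algorithm produces at most $N_\mathrm{exc} = O(t\log d_X/d_X^{1/2})$ exceptions, then this budget matches the encoding cost precisely at $\lambda \asymp \log^2 d_X/d_X^{1/2}$. For comparison, Galvin's algorithm yields $N_\mathrm{exc} = O(t/d_X^{1/3})$, which is why the previous threshold was $\tilde\Omega(d_X^{-1/3})$.

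The main obstacle — and what the novel container construction must accomplish — is exactly the improved exception-count bound $N_\mathrm{exc}=O(t\log d_X/d_X^{1/2})$. I expect this to require a two-scale analysis of the exploration: handling vertices with very many already-certified neighbors ``for free'' through the $m_\varphi$-expansion at a coarse scale, while giving a finer-grained treatment of the intermediate regime where Galvin's original algorithm loses information. Once the refined container count is established, combining it with the straightforward per-fingerprint bound $(1+\lambda)^a$ on the inner weighted sum yields the target \eqref{target}.
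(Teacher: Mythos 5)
There is a genuine gap here, and it is a fundamental one: the statement you were asked to prove is \Cref{conj}, the paper's \emph{open conjecture} that a container bound holds all the way down to $\lam=\tilde\Omega(1/d_X)$, whereas your proposal is (a sketch of) an argument for \Cref{ML}, whose threshold is $\lam=\tilde\Omega(d_X^{-1/2})$. Your own accounting makes this explicit: you budget the surplus against an exception count $N_{\mathrm{exc}}=O(t\log d_X/d_X^{1/2})$ and observe that "this budget matches the encoding cost precisely at $\lam\asymp \log^2 d_X/d_X^{1/2}$", and your final sentence says the argument "yields the target \eqref{target}" --- which is the bound of \Cref{ML}, not the conjectured bound for $\lam\geq\lam^\ast(d_X)=\tilde\Omega(1/d_X)$. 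Nothing in the proposal produces savings when $\lam$ is of order $\mathrm{polylog}(d_X)/d_X$: with this style of encoding the cost of the container family is $\exp\big(\Theta(t\log^2 d_X/d_X)\big)$ (already at the coarse $\varphi$-approximation stage, \Cref{lem.varphi}, and again in the exception bookkeeping you describe), while the weighted savings extracted per fingerprint is only of order $\exp\big(-O(\bar\lam t/\sqrt{d_X})\big)$, because the fingerprint localizes $A$ only to within a set $S$ whose size exceeds $a$ by order $t/\sqrt{d_X}$ (or else one pays $\exp\big(\Theta(t\log d_X/\sqrt{d_X})\big)$ to enumerate the leftover). Balancing these two quantities is exactly what forces $\lam\gtrsim\log^2 d_X/\sqrt{d_X}$; to reach $\tilde\Omega(1/d_X)$ one would need the localization error (or exception count) to be of order $t\cdot\mathrm{polylog}(d_X)/d_X$, which neither your two-scale exploration nor the paper's iterated $\psi$-approximation (\Cref{heart}) achieves.

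The paper itself does not prove \Cref{conj}; it proves \Cref{ML} and remarks in the concluding section that the present methods hit a natural barrier at $\lam=\tilde\Omega(d_X^{-1/2})$, so that further progress on the conjecture appears to require genuinely new ideas. As a secondary point, even read as a proof sketch of \Cref{ML} your proposal omits the paper's key mechanism --- the iterative refinement in which $\psi_X$ is halved at each stage, additional runs are cheap only for containers with $|F|\geq g-t/2^i$, and the remaining "bad" containers are handled by a separate reconstruction --- but the decisive issue is that the conjectured range of $\lam$ is simply not addressed.
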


\begin{remark}
We note that the assumption $g-a =\Omega(g/d_Y)$ in \Cref{ML} can be relaxed to $g-a= \Omega(\frac{g\log^2 d_X}{\sqrt{d_X} d_Y \gl})$ (which is weaker for $\gl \gg \frac{\log^2 d_X}{(d_X)^{1/2}}$) as long as $\gl$ grows at most polynomially fast in $d_X$. We stick to the simpler lower bound $g/d_Y$ for simplicity, as this lower bound is enough for all of the current applications.
\end{remark}

\subsection{Further applications}\label{subsec:furtherapp}

\subsubsection*{Approximation algorithms for the hard-core model on expanders}
Given $Z,\hat Z, \epsilon>0$, we say that $\hat Z$ is an $\epsilon$-relative approximation to $Z$ if $ e^{-\eps} Z \leq \hat Z \leq  e^{\eps} Z$. Recall that for two probability measures $\mu, \nu$ on a measurable space $(\Omega,\mathcal{F})$, the total variation distance between $\mu$ and $\nu$ is
\[
\|\mu- \nu\|_{TV}= \sup_{A\in \mathcal{F}}|\mu(A)-\nu(A)|\, .
\]
Two natural computational tasks arise when considering the hard-core model on $G$ at activity $\lam$:
\begin{enumerate}
\item  Compute an $\epsilon$-relative approximation to $Z_G(\lam)$; \label{FPTAS}
\item Output an independent set with distribution $\hat \mu_{G,\lam}$ such that $\|\hat \mu_{G,\lam}- \mu_{G,\lam}\|_{TV}\leq \eps$.\label{sample}
\end{enumerate}
A deterministic algorithm which does Task~\ref{FPTAS} in time polynomial in $n$ and $1/\eps$ is known as a \emph{fully polynomial time approximation scheme} (FPTAS).
An algorithm which does Task~\ref{sample} is known as an \emph{efficient sampling scheme}.

Intuitively, one might expect the problem of approximating $Z_G(\lam)$ to be easier on the class of bipartite graphs; for one, there is a polynomial-time algorithm to find a maximum-size independent set in  a bipartite graph while the corresponding problem is NP-hard for general graphs. The problem of approximating $Z_G(\lam)$ for bipartite $G$ belongs to the complexity class  \#BIS introduced by Dyer, Goldberg, Greenhill, and Jerrum~\cite{dyer2000relative}. They showed that several natural combinatorial counting problems are as hard to approximate as \#BIS. Resolving the complexity of \#BIS remains a major open problem and in recent years there has been an effort to design approximation algorithms for $Z_G(\lam)$ that exploit bipartite structure. The line of work most relevant here is that which followed the breakthrough of Helmuth, Perkins and Regts~\cite{HelmuthAlgorithmic2} who designed efficient approximation algorithms for  $Z_G(\lam)$ on $\Z^d$ in the previously intractable `low temperature' (i.e.\ large $\lam$) regime. Their method was based on tools from statistical physics, namely Pirogov-Sinai Theory, polymer models and cluster expansions. Soon after the first author, Keevash and Perkins~\cite{JKP2} gave an FPTAS and efficient sampling scheme for the low-temperature hard-core model in bounded-degree, bipartite expander graphs. This work was followed by several improvements, extensions, and generalizations including~\cite{cannon2020counting,chen2021fast,chen2021sampling,friedrich2020polymer,galanis2020fast,galanis2021fast,liao2019counting}.  Many of these algorithms exploit the fact that on a bipartite graph $G$ with sufficient expansion, typical samples from $\mu_{G,\lam}$ are imbalanced (preferring one side of the bipartition to the other). The container method is a particularly powerful tool for detecting such structure. This fact was exploited by the first author, Perkins and Potukuchi~\cite{jenssen2023approximately} who combined the cluster expansion method with graph containers to extend the range of $\lam$ for which efficient approximation algorithms on bipartite expanders were known to exist. Our new container lemma (\Cref{ML}) can be used to extend the range further still. Given $\alpha>0$, we say that a bipartite graph with parts $X,Y$ is a \emph{bipartite $\alpha$-expander} if $|N(A)|\geq (1+\alpha)|A|$ for all $A\subseteq X$ with $|A|<|X|/2$ and $A\subseteq Y$ with $|A|<|Y|/2$. 

\begin{thm}\label{algos}
    For every $\alpha>0$ there exists a constant $C>0$ such that for all $d\geq 3$ and $\lam\geq \frac{C\log^2 d}{d^{1/2}}$ there is an FPTAS for $Z_G(\lam)$ and a polynomial-time sampling scheme for $\mu_{G, \lam}$ for the class of $d$-regular, bipartite $\alpha$-expanders.
\end{thm}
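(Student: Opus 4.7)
The plan is to set up a polymer model on each side of the bipartition and feed a variant of \Cref{ML} into the abstract polymer / cluster expansion algorithmic framework of~\cite{HelmuthAlgorithmic2, JKP2}, following the strategy laid out in~\cite{jenssen2023approximately}. Writing $V(G)=X\sqcup Y$ for the bipartition, for each side $D\in\{X,Y\}$ I would define a \emph{polymer} to be a $2$-linked set $\gamma\subseteq D$ with $|[\gamma]|\leq|D|/2$, of weight
\[ w_\lam(\gamma)=\frac{\lam^{|\gamma|}}{(1+\lam)^{|N(\gamma)|}}, \]
declaring two polymers to be compatible iff their union fails to be $2$-linked. A standard defect-representation argument then gives
\[ Z_G(\lam)=(1+o(1))\bigl((1+\lam)^{|Y|}\Xi_X+(1+\lam)^{|X|}\Xi_Y\bigr),\]
where $\Xi_D$ denotes the polymer partition function on side $D$; the $o(1)$ error, coming from `balanced' independent sets, is exponentially small in $d$ by $\alpha$-expansion.

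The crux of the proof would be verifying a Koteck\'y--Preiss-type condition
\[ \sum_{\gamma\ni v} w_\lam(\gamma)\,e^{|\gamma|}\leq 1 \qquad \text{for every } v\in D,\]
which both ensures convergence of the cluster expansion of $\log\Xi_D$ and provides the exponential tail estimates needed to truncate it at depth $O(\log n)$. Grouping polymers by $(a,g)=(|[\gamma]|,|N(\gamma)|)$, this sum decomposes into pieces of the form
\[ \sum_{a,g}\frac{1}{(1+\lam)^g}\sum_{A\in\cG(a,g):\,v\in[A]}(e\lam)^{|A|},\]
precisely the shape handled by \Cref{ML}. For a $d$-regular $\alpha$-expander, any $2$-linked $A\subseteq D$ with $|[A]|\leq|D|/2$ satisfies $g-a\geq\alpha a/(1+\alpha)=\Omega(g)$, which is far stronger than the $g-a\geq cd/(\log d)^2$ hypothesis of \Cref{ML}. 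As flagged in the paragraph preceding the lemma, this extra strength should let me dispense with the $m_\varphi$ assumption (which $\alpha$-expanders in general do not satisfy). Granted this variant, the saving $e^{-(g-a)(\log d)^2/(6d)}$ decays exponentially in $g$ and absorbs both the entropy factor of $|Y|$ containers per class and the $(e\lam)^{|A|}$ factor, verifying Koteck\'y--Preiss once $\lam\geq C(\log d)^2/d^{1/2}$.

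Once convergence is established, the FPTAS follows from the by-now standard polymer algorithm: the exponential tail on cluster weights lets one enumerate all clusters of size at most $K=O(\log n)$ in polynomial time, and the truncated cluster expansion approximates $\log\Xi_D$ to within $\eps$; combining the two sides gives the approximation to $Z_G(\lam)$. The polynomial-time sampler then follows from the standard counting-to-sampling reduction for polymer models~\cite{chen2021sampling}. The main obstacle I foresee is the variant of \Cref{ML}: one must revisit the container construction algorithm behind \Cref{heart} and check that each growing step can be charged to the global deficit $g-a=\Omega(g)$ in place of the local neighborhood expansion $m_\varphi$. I expect this to go through because $\alpha$-expansion is a genuinely stronger global assumption, but verifying it rigorously will require carefully retracing the iterative construction with the amortized deficit in place of the per-step $m_\varphi$ bound---exactly the `stronger bound on $g-a$, relaxed $m_\varphi$' trade-off signalled by the authors.
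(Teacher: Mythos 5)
There is a genuine gap, and it sits exactly where the new content of the theorem lies. Your whole argument funnels through a ``variant of \Cref{ML} with the $m_\varphi$ hypothesis dropped,'' which you defer with ``I expect this to go through.'' That variant \emph{is} the theorem's technical substance; without it your sketch adds nothing beyond what is already in \cite{jenssen2023approximately}. Moreover, your proposed route to it is misdirected: the hypothesis $m_\varphi=\Omega(\varphi d_X)$ is never used in \Cref{heart} or in the reconstruction step (\Cref{small.F}, \Cref{lem_large F}) --- those only need approximate biregularity and the lower bound on $t=g-a$. It enters solely through the coarse $\varphi$-approximation stage, \Cref{lem.varphi}, where it controls the term $g\exp(-p m_\varphi)$. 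So ``retracing the iterative construction of \Cref{heart} and charging each growing step to the global deficit'' attacks a stage that needs no repair, while leaving untouched the stage that does. The correct fix (the one the paper uses) is to replace \Cref{lem.varphi} by the expander-specific construction of $d/2$-approximations from \cite[Lemma 7]{jenssen2023approximately} (\Cref{lem.ess}), whose cost $|Y|\exp(O(g\log^2 d/d))$ plays the role of \eqref{v.total}; one then runs \Cref{heart} and the reconstruction verbatim with $\delta=1$, $\delta'=\alpha/(1+\alpha)$ (noting $g\geq d$ for any nonempty $A$, so $t=\Omega(d)$, which is why the relaxed $m_\varphi$/strengthened $t$ trade-off is available), obtaining \Cref{lem.algos}; finally one substitutes \Cref{lem.algos} for \cite[Lemma 20]{jenssen2023approximately} in their derivation. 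None of this is in your proposal.

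A second, quantitative problem: your Koteck\'y--Preiss condition with decoration $e^{|\gamma|}$ cannot be verified from a container bound of the shape \eqref{target} in this regime of $\lam$. For a polymer with $|[\gamma]|=a$ of order $|D|/2$, an $\alpha$-expander guarantees only $g-a=\Theta_\alpha(a)$, so the container saving $e^{-\Theta((g-a)\log^2 d/d)}=e^{-\Theta_\alpha(a\log^2 d/d)}$ is overwhelmed by the factor $e^{a}$ coming from $(e\lam)^{|A|}\leq e^{a}\lam^{|A|}$; the claimed absorption fails by a factor exponential in $a$. This is precisely why the decorations used in this setting are of size $e^{|\gamma|/\mathrm{poly}(d)}$ (compare $f(A)=|A|/d^{3/2}$ in \Cref{lem15}), and the truncation depth and cluster-enumeration bookkeeping in \cite{jenssen2023approximately} are tuned to this weaker, $d$-dependent decay rather than to decay at rate $1$ per vertex; your ``truncate at $K=O(\log n)$'' step would need to be redone accordingly. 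Relatedly, the defect representation with an error ``exponentially small in $d$'' is not right as stated (the error should be quantified in $n$ and itself rests on the container estimates); this is handled inside the machinery of \cite{jenssen2023approximately} that you are quoting, but as written it is another place where your sketch asserts more than the tools you invoke deliver.
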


\nin The above result extends \cite[Theorem 2]{jenssen2023approximately}, which assumes that $\lam=\tilde \Omega(d^{-1/4})$. 

In the remainder of this section, we discuss further applications more briefly and informally {for the sake of brevity and in order to avoid excessive} repetition of previous work. For these applications we do not provide formal proofs, and only indicate where improvements can be made.

\subsubsection*{Slow mixing of Glauber dynamics.} Given a graph $G$, the \emph{Glauber dynamics} for the hardcore model $\mu_{G,\lam}$ is the following Markov chain on state space $\cI(G)$, the family of independent sets of $G$:
\begin{enumerate}
\item Begin with an arbitrary $I\in\cI(G)$, e.g. $I=\emptyset$.
\item Choose a vertex $v\in V(G)$ uniformly at random.
\item {Sample} $X\sim \text{Ber}(\lam/(1+\lam))$. If $X=1$ and $v$ has no neighbours in $I$ then update $I\leftarrow I\cup \{v\}$. If $X=0$ update $I\leftarrow I\backslash \{v\}$.
\end{enumerate}
This Markov chain has stationary distribution $\mu_{G,\lam}$. If the chain mixes rapidly, it gives an efficient sampling scheme for the hard-core model on $G$. Galvin and Tetali in \cite{galvin2006slow} investigated the mixing time $\tau_{\cM_{\lam}(G)}$ of the Glauber dynamics for $\mu_{G,\lam}$ on a bipartite expander $G$. They showed that for $\lam$ sufficiently large the Glauber dynamics mixes slowly. The driving force behind this slow mixing is that, as alluded to in previous sections, a typical sample from $\mu_{G,\lam}$ is highly imbalanced. Balanced independent sets therefore create a small `bottleneck' in the state space. As a concrete example, Galvin and Tetali prove the following. 

\begin{thm}[{\cite[Corollary 1.4]{galvin2006slow}}] 
    There exists $C>0$ such that if $d$ is sufficiently large and $\frac{C\log^{3/2} d}{d^{1/4}}\leq \lam\leq O(1)$, we have
    \[ \tau_{\cM_\lam (Q_d)} \geq \exp\left\{\Omega\left(\frac{2^d \log^3(1+\lam)}{\sqrt{d}\log^2 d} \right)\right\}.\]
\end{thm}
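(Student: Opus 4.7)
The plan is a standard conductance (bottleneck) argument for the single-site Glauber chain. I would partition $\cI(Q_d)=\cI_\cE\sqcup S\sqcup \cI_\cO$, where $\cI_\cE$ consists of configurations in which $I\cap\cO$ is small enough that $N(I\cap\cO)$ occupies a negligible fraction of $\cE$, $\cI_\cO$ is defined symmetrically, and $S$ is the ``interface'' of all remaining configurations. By the structure theorem of Galvin~\cite{galvin_threshold}, valid throughout the stated range of $\lam$, a typical sample from $\mu_{Q_d,\lam}$ is highly imbalanced, so $\mu_{Q_d,\lam}(\cI_\cE),\mu_{Q_d,\lam}(\cI_\cO)\ge 1/3$. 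Since each Glauber move changes $|I|$ by at most one, every trajectory from $\cI_\cE$ to $\cI_\cO$ must pass through $S$, and the Cheeger/conductance bound gives
\[
\tau_{\cM_\lam(Q_d)}\ge \Omega\!\left(\frac{\min\{\mu_{Q_d,\lam}(\cI_\cE),\mu_{Q_d,\lam}(\cI_\cO)\}}{\mu_{Q_d,\lam}(S)}\right),
\]
reducing the task to establishing the upper bound $\mu_{Q_d,\lam}(S)\le \exp\{-\Omega(2^d\log^3(1+\lam)/(\sqrt d\log^2 d))\}$.

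To bound $\mu_{Q_d,\lam}(S)$, I would apply the graph container lemma to $\Sigma=Q_d$ with $d_X=d_Y=d$, $\delta=1$. For each $I\in S$, let $A$ be the minority side of $I$ (say $A=I\cap\cE$), decomposed into its 2-linked components $A_1,\ldots,A_k$ with closures of sizes $a_i$ and neighbourhoods of sizes $g_i$, so that $A_i\in\cG(a_i,g_i)$. The hypercube satisfies the required expansion $m_\varphi\ge \delta''\varphi d$ for $\varphi=d/2$ via Harper's vertex-isoperimetric inequality. Multiplying the container bound \eqref{target} across the components and then summing over the independent choice of $I\cap\cO\subseteq V(Q_d)\setminus N(A)$ (contributing a factor $(1+\lam)^{2^{d-1}-|N(A)|}$ to $Z_{Q_d}(\lam)$) produces the claimed upper bound on $\mu_{Q_d,\lam}(S)$ after normalising by $Z_{Q_d}(\lam)\ge (1+\lam)^{2^{d-1}}$.

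The main obstacle is careful calibration of parameters. The total exponent $\sum_i (g_i-a_i)\log^2 d/(6d)$ arising from \eqref{target} must be shown to exceed the target rate $2^d\log^3(1+\lam)/(\sqrt d\log^2 d)$ uniformly over admissible decompositions; this requires the definition of $S$ to force $\sum_i a_i$ (and hence $\sum_i g_i$) to be suitably large, together with the expansion conditions $g_i-a_i\ge \delta' g_i/d_Y$ and $g_i-a_i\ge cd_X/\log^2 d_X$ appearing in \Cref{ML}. Tiny 2-linked components for which the latter hypothesis fails must be handled by a separate, brute-force enumeration, and a union bound over the admissible statistics $\{(a_i,g_i)\}$ is needed. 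Substituting \Cref{ML} for Galvin's original container lemma in this strategy extends the slow-mixing regime down to $\lam=\tilde\Omega(d^{-1/2})$, strictly improving the range $\lam\ge C\log^{3/2}d/d^{1/4}$ stated above.
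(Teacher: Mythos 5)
This statement is not proved in the paper at all: it is quoted verbatim from Galvin--Tetali \cite{galvin2006slow}, and the paper's only contribution here is the remark that \Cref{ML} can be substituted for \cite[Theorem 2.1]{galvin2006slow} to push the slow-mixing range down to $\lam=\tilde\Omega(d^{-1/2})$. So the relevant comparison is with Galvin--Tetali's original argument, and your outline follows exactly that route (conductance/bottleneck bound plus a container-type estimate on balanced independent sets); there is nothing genuinely different in the strategy.

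As a proof, however, the proposal has a real gap: everything you defer to ``careful calibration'' is the actual content of the theorem. Concretely, applying \Cref{ML} componentwise costs a factor $|Y|=2^{d-1}$ per $2$-linked component plus the entropy of choosing the statistics $\{(a_i,g_i)\}$, while the penalty $e^{-(g_i-a_i)\log^2 d/(6d)}$ is only $e^{-\Theta(\log^2 d)}$ for a small component (even one comfortably satisfying the hypotheses --- e.g.\ a singleton has $t_i=d-1$), so the container bound cannot beat the per-component entropy unless the component is very large. The dominant contribution to the bottleneck set comes precisely from configurations whose minority side is a union of many small components; these must be handled by a direct first-moment/binomial computation rather than by containers, and it is this part of the trade-off, together with the choice of an order parameter that changes by at most one per Glauber move and of the threshold defining $S$, that produces the specific rate $2^d\log^3(1+\lam)/(\sqrt d\,\log^2 d)$. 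Your sketch never specifies the order parameter or threshold and never explains where $\log^3(1+\lam)$ comes from; note also that the penalty in the stated form of \Cref{ML} is $\lam$-free, so it cannot by itself yield a $\log^3(1+\lam)$ exponent --- one needs the $\lam$-dependent bound $e^{-\bar\lam t/(6\sqrt{d_X})}$ implicit in the proof of \Cref{ML} (or Galvin--Tetali's Theorem 2.1, whose penalty depends on $\lam$). In short: right strategy, but the reduction you present leaves the essential quantitative steps --- the bottleneck definition, the small-component regime, and the optimization giving the stated exponent --- unproven.
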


Our improved container lemma (\Cref{ML}) used in place of \cite[Theorem 2.1]{galvin2006slow}
establishes the same slow mixing phenomenon all the way down to $\lam=\tilde \Omega(d^{-1/2})$. More generally, Galvin and Tetali establish slow mixing results for $d$-regular bipartite $\alpha$-expanders provided $\lam$ is sufficiently large as a function of $\alpha$ and $d$. As above, \Cref{ML} can be used to improve the range of $\lam$ for which these results hold.

\subsubsection*{Antichains in the Boolean lattice.} Given $n\in\mathbb N$, let $B_n$ denote the Boolean lattice of dimension $n$, i.e. the power set of $[n]=\{1,\ldots,n\}$.
Recall that $\cF \subseteq B_n$ is called an \emph{antichain} if it is inclusion-free (i.e. $A \not \subseteq B$ for all distinct $A,B\in \cF$). Dedekind's problem, dating back to 1897, asks for the total number of antichains in $B_n$. In recent work~\cite{sparsesperner}, the current authors applied the graph container method to study Dedekind's problem in detail. As part of this study, the authors count and study the typical structure of antichains in $B_n$ of a \emph{given} size. For $0\leq k\leq n$, we call the family $L_k=\binom{[n]}{k}\subseteq B_n$ the $k^{th}$ $\emph{layer}$ of $B_n$. We say that $L_{k-1},L_{k}, L_{k+1}$ are three \emph{central layers} if $k=\lfloor n/2 \rfloor$ or $k= \lceil n/2 \rceil$.
One of the {main} results of \cite{sparsesperner} is the following.

\begin{thm}[{\cite[Theorem 1.5]{sparsesperner}}]\label{thm:antichain}
    There exists $C>0$ such that if  $\frac{C\log^2 n}{\sqrt{n}}<\beta\leq 1$, then almost all antichains of size $\beta \displaystyle \binom{n}{\lfloor n/2\rfloor}$ in $B_n$ are a subset of three central layers.
\end{thm}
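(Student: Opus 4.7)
The plan is to rerun the proof strategy of \cite[Theorem 1.5]{sparsesperner}, substituting Lemma \ref{ML} for the container lemma used there, which extends the applicable range from $\beta = \tilde\Omega(n^{-1/3})$ to $\beta = \tilde\Omega(n^{-1/2})$. Set $k := \lfloor n/2 \rfloor$, $M := L_{k-1} \cup L_k \cup L_{k+1}$, and $s := \beta \binom{n}{k}$. Write $\cC(s)$ for the set of antichains of size $s$ in $B_n$ and $\cC^{\star}(s) \subseteq \cC(s)$ for those contained in $M$ (similarly for the alternative set of three central layers when $n$ is odd). The goal reduces to showing $|\cC(s) \setminus \cC^{\star}(s)| = o(|\cC^{\star}(s)|)$, and the lower bound $|\cC^{\star}(s)| \ge \binom{|L_k|}{s}$ is immediate since every $s$-subset of $L_k$ is an antichain.

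For the upper bound, partition $\cC(s) \setminus \cC^{\star}(s)$ by the defect $D := \cA \setminus M$. For each $D \neq \emptyset$, the remainder of $\cA$ must lie in $M$ and be incomparable with every element of $D$. I would decompose $D$ across the layers $L_{k+j}$ ($|j| \ge 2$) and, within each such layer, into its 2-linked components in the square of the bipartite comparability graph $\Sigma_j$ between $L_{k+j}$ and the adjacent layer closer to $M$. For bounded $j$, $\Sigma_j$ is approximately $(d_X, d_Y)$-biregular with $d_X \asymp d_Y \asymp n$ and satisfies the expansion hypothesis $m_\varphi \ge \delta'' \varphi d_X$ of Lemma \ref{ML} by Kruskal--Katona (equivalently, Kleitman's normalized matching property). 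The natural weighting on defects, arising from the binomial coefficients that count completions to size $s$, is $\lam^{|D|}$ with $\lam = \beta/(1-\beta)$; the threshold $\lam \ge C \log^2 d_X / d_X^{1/2}$ then becomes precisely $\beta \ge C' \log^2 n / \sqrt n$, matching the hypothesis. Applying Lemma \ref{ML} yields an exponential suppression $e^{-(g-a)\log^2 d_X / (6 d_X)}$ per 2-linked defect component, and summing over defect profiles via the polymer-style expansion of \cite{sparsesperner} gives $|\cC(s) \setminus \cC^{\star}(s)| = o(|\cC^{\star}(s)|)$.

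The main obstacle is piecing the layer-wise bounds into a coherent global estimate, since a defect may touch many layers simultaneously. Layers $L_{k+j}$ with $|j|$ large have very small size and can be controlled by crude entropy bounds rather than Lemma \ref{ML}, so one only needs to verify the expansion and biregularity hypotheses for bounded $j$. Additional care is required for 2-linked defect components $A$ with $g - a < c d_X / (\log d_X)^2$, which fall outside the scope of Lemma \ref{ML}; as in prior container applications, such \emph{small} components contribute negligibly and can be absorbed at polynomial cost in $n$.
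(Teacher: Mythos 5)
The first thing to note is that this paper does not actually prove \Cref{thm:antichain}: it is quoted from \cite{sparsesperner}, and the only proof indication given here is that one studies the hard-core model on the \emph{global} comparability graph $G_n$ on all of $B_n$ (whose independent sets are antichains) and applies \Cref{ML} \emph{and its proof} to that model. Your sketch takes a different, layer-local route --- decomposing the defect $D=\cA\setminus M$ into 2-linked pieces inside individual layers and charging each piece via the bipartite graph $\Sigma_j$ between $L_{k+j}$ and the adjacent layer --- and then defers the assembly (multi-layer defects, the fixed-size conditioning, the comparison ensemble on three layers) to ``the polymer-style expansion of \cite{sparsesperner}''. Since that is the very paper whose theorem you are proving, and since its stated Theorem 1.5 already has the $\log^2 n/\sqrt n$ threshold (it is proved \emph{using} \Cref{ML}, so there is no weaker $n^{-1/3}$ version to upgrade), what you have written is closer to a pointer to the source than an independent argument.

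Beyond that, the one concrete piece of your sketch that differs from the indicated architecture contains a genuine gap: charging a defect component $A\subseteq L_{k+j}$, $|j|\ge 2$, only against its neighbourhood in the adjacent layer gives essentially no gain. For $\beta=\tilde O(n^{-1/2})$ a typical antichain of size $\beta\binom{n}{\lfloor n/2\rfloor}$ occupies $L_{k\pm1}$ at density roughly $\beta e^{-\Theta(\beta n)}$ (the analogue of $\lam(1+\lam)^{-d}$ on the cube), which is $e^{-\tilde\Omega(\sqrt n)}$; excluding the $1$-shadow $N_{\Sigma_2}(A)\subseteq L_{k+1}$ therefore costs the ambient ensemble almost nothing, so the factor $(1+\lam)^{g}$ with $\lam=\beta/(1-\beta)$ supplied by \Cref{ML} for $\Sigma_2$ does not represent a real entropy loss and cannot beat the positional entropy of the defect. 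The loss actually incurred by such a defect is the exclusion of its iterated shadow in the middle layer $L_k$ (of size $\asymp n^{|j|-1}$ per defect vertex, where the density is $\asymp\beta$), and capturing it requires either the global graph $G_n$ (as the paper indicates, and which needs extra care since $G_n$ is nowhere near $\delta$-approximately biregular for constant $\delta$) or container estimates for the non-adjacent bipartite graphs between $L_{k+j}$ and the central layers, together with an honest conversion from $\lam$-weighted sums to counts of antichains of exactly size $s$ (a tilting/local-limit step that the heuristic ``$\lam=\beta/(1-\beta)$ from binomial ratios'' does not replace, especially for $\beta$ close to $1$, where your lower bound $\binom{|L_k|}{s}$ on $|\cC^{\star}(s)|$ is far too weak a benchmark). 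Until these points are repaired, the claimed exponential suppression per component does not yield $|\cC(s)\setminus\cC^{\star}(s)|=o(|\cC^{\star}(s)|)$.
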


For the proof, we study the hard-core model on the graph $G_n$ whose vertex set is $B_n$ and $u\sim v$ if and only if {$u\neq v$ and} $u$ is contained in $v$ or vice versa. Note that independent sets in $G_n$ are precisely antichains in $B_n$. We use our container lemma, \Cref{ML} (and its proof), to study the hard-core measure on $G_n$ and to prove Theorem~\ref{thm:antichain}.

Recently Balogh, Garcia and Li \cite{balogh2021independent} studied the hard-core model on the subgraph $M_n\subseteq G_n$ induced by the middle two layers $L_{(n-1)/2}, L_{(n+1)/2}$ when $n$ is odd. In particular, they obtain detailed asymptotics for the partition function $Z_{M_n}(\lam)$ when $\lam\geq C \log n/n^{1/3}$ (see \cite[Theorem 5.1]{balogh2021independent}). The now familiar $\tilde\Omega(n^{-1/3})$ bottleneck comes from the graph container method. Using \Cref{ML} one can easily extend \cite[Theorem 5.1]{balogh2021independent} to the regime $\lam= \tilde\Omega(n^{-1/2})$.

\subsubsection*{Independent sets in the percolated hypercube.}

Given $d\in \mathbb{N}$ and $p\in [0,1]$ we let $Q_{d,p}$ denote the random subgraph of the hypercube $Q_d$ where each edge is retained independently with probability $p$. Recently, Kronenberg and Spinka~\cite{kronenberg2022independent} studied the hard-core model on $Q_{d,p}$. In particular, they find asymptotic formulae for the expected value (and higher moments) of the partition function $Z_{Q_{d,p}}(\lam)$. For example, they prove the following.

\begin{thm}[{\cite[Theorem 1.1]{kronenberg2022independent}}]\label{thm:KS}
	For $d$ sufficiently large and $p \ge \frac{C\log d}{d^{1/3}}$,
	\[ \mathbb E |\cI(Q_{d,p})| =
	2 \cdot 2^{2^{d-1}} \exp\left[ \tfrac12 (2-p)^d + \left(a(p)\tbinom d2 -\tfrac14 \right) 2^d (1-\tfrac p2)^{2d} + O\left(d^4 2^d (1- \tfrac p2)^{3d}\right) \right], \]
	where
	\[ a(p) := \frac{(1+(1-p)^2)^2 }{(2-p)^4} - \frac14 .\]
\end{thm}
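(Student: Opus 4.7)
The natural approach is to view $\mathbb{E}|\cI(Q_{d,p})|$ as the partition function of a polymer model and apply cluster expansion, with a graph container estimate supplying the convergence input. Setting $q = 1-p$ and decomposing each subset $I = A \sqcup B$ with $A \subseteq \cE$, $B \subseteq \cO$, the probability that $I$ is independent in $Q_{d,p}$ is $q^{e_{Q_d}(A,B)}$, so
\[
\mathbb{E}|\cI(Q_{d,p})| = \sum_{B \subseteq \cO}\sum_{A \subseteq \cE} q^{e_{Q_d}(A,B)} = \sum_{B \subseteq \cO}\prod_{u \in \cE}\bigl(1 + q^{|N(u) \cap B|}\bigr) = 2^{2^{d-1}} Z^{\cE},
\]
where $Z^{\cE} := \sum_{B \subseteq \cO}\prod_{u \in \cE}(1 + q^{|N(u) \cap B|})/2$. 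The summand of $Z^{\cE}$ concentrates near $B = \emptyset$ and (by the involution $B \mapsto \cO \setminus B$, or equivalently the symmetric expansion in $A$) near $B = \cO$, producing the two phases whose combined contribution yields the leading factor of $2$ in the asymptotic formula.

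I would then install a standard polymer model for the even phase. Declare a polymer to be a $2$-linked subset $\gamma \subseteq \cO$ with weight $w(\gamma) = \prod_{u \in N(\gamma)}(1+q^{|N(u) \cap \gamma|})/2$, and call two polymers compatible iff they share no neighbour in $\cE$. The even-phase partial sum of $Z^{\cE}$ factors over compatible polymer collections, and the standard cluster expansion gives
\[
\log Z^{\cE}_{\text{even}} = \sum_{\Gamma \text{ cluster}} \phi(\Gamma)\prod_{\gamma \in \Gamma}w(\gamma),
\]
with $\phi$ the Ursell function. A symmetric expansion around $B = \cO$ handles the odd phase.

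The main technical obstacle is verifying convergence of the cluster expansion (the Koteck\'y--Preiss criterion) in the stated range of $p$. Singleton polymers have weight $w(\{v\}) = (1-p/2)^d$, which is already small for $p \gg 1/d$, so the serious work lies in controlling the aggregate weight of large polymers. Using the crude bound $w(\gamma) \le (1-p/2)^{|N(\gamma)|}$, one applies a Sapozhenko/Galvin-type container lemma to the family $\cG(a,g)$ of $2$-linked subsets of $\cO$ with prescribed closure and neighbourhood sizes, showing that their total contribution decays exponentially in $g - a$. It is precisely at this step that the threshold $p \ge C\log d/d^{1/3}$ enters, via Galvin's container estimate; replacing that input by \Cref{ML} would reduce the threshold to $p = \tilde\Omega(d^{-1/2})$.

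Once convergence is in hand, the three displayed terms of the exponent are extracted by truncating the cluster expansion. The $2^{d-1}$ singleton polymers contribute $\tfrac{1}{2}(2-p)^d$. The $2^{d-2}\binom{d}{2}$ size-$2$ polymers (pairs at distance $2$ in $Q_d$, which are the only nontrivial $2$-linked pairs in $\cO$) together with the order-$2$ Ursell corrections from incompatible pairs of distinct singletons and from doubled singletons combine, after direct algebraic simplification using the interaction factor $\tfrac{2(1+q^2)}{(1+q)^2}$ per common neighbour, into $\bigl(a(p)\binom{d}{2} - \tfrac14\bigr) 2^d(1-p/2)^{2d}$. All clusters of total polymer size $\geq 3$ are aggregated into the error term, whose $d^4$ factor reflects the number of combinatorial shapes of $3$-vertex clusters in $Q_d$ (governed by triples of vertices at mutual distance $2$, of which there are $O(d^4)$ per base vertex).
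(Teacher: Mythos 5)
First, note that the paper you are working from does not prove this theorem at all: it is quoted from Kronenberg--Spinka in the ``further applications'' section, and the paper only remarks that the container input inside their proof can be upgraded. Your high-level plan (two phases, a polymer model on one side of $Q_d$ with weights $w(\gamma)=\prod_{u\in N(\gamma)}\tfrac{1+q^{d_\gamma(u)}}{2}$, Koteck\'y--Preiss convergence via containers, then truncation at order two) is indeed the route of the original proof, and your order-two bookkeeping is correct: the singletons give $\tfrac12(2-p)^d$ and the distance-$2$ pairs together with the Ursell corrections give $\bigl(a(p)\tbinom d2-\tfrac14\bigr)2^d(1-\tfrac p2)^{2d}$. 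The genuine gap is in the convergence step. Bounding $w(\gamma)\le(1-p/2)^{|N(\gamma)|}$ reduces the problem to bounding the \emph{number} of sets in $\cG(a,g)$, but Galvin's lemma and \Cref{ML} bound the weighted sum $\sum_{A\in\cG(a,g)}\lambda^{|A|}$; converting such a bound into a bound on $|\cG(a,g)|$ costs a factor $\lambda^{-a}$, and in the critical regime $a=\Theta(2^d)$, $g-a=O(a/\sqrt d)$ this factor $e^{a\log(1/\lambda)}$ swamps the container gain $e^{-(g-a)\log^2 d/(6d)}$, so the estimate you need does not follow. Put differently, the crude bound discards exactly the structure that makes the container trade-off work here: vertices $u\in N(\gamma)$ with $d_\gamma(u)\ge 2$ contribute $\tfrac{1+q^2}{2}$ rather than $\tfrac{1+q}{2}$, and it is this degeneration (the analogue of the activity factor $\lambda^{|A|}$) that must be retained. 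Kronenberg--Spinka prove a Sapozhenko/Galvin-type container lemma directly for the weights $\prod_{u\in N(\gamma)}\tfrac{1+q^{d_\gamma(u)}}{2}$, and it is inside that weighted lemma that the threshold $p\ge C\log d/d^{1/3}$ arises (and where the iterated $\psi$-approximation of this paper would lower it to $p=\tilde\Omega(d^{-1/2})$); your sketch as written cannot simply quote the hard-core container lemma after the crude bound.

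A secondary inaccuracy: the $B$-sum does not concentrate ``near $B=\emptyset$ and near $B=\cO$'', and $B\mapsto\cO\setminus B$ is not a symmetry of the summand — at $B=\cO$ the term is $\prod_{u\in\cE}\tfrac{1+q^d}{2}\approx 2^{-2^{d-1}}$ times the $B=\emptyset$ term, i.e.\ exponentially small. The second factor of $2^{2^{d-1}}$ comes from the aggregate of essentially \emph{all} $B$ with the even side nearly empty, which is captured by the symmetric expansion with defect polymers on $\cE$ (your parenthetical alternative is the correct mechanism). To get the prefactor $2$ rigorously you must cap polymer sizes (say $|[\gamma]|\le 2^{d-2}$) so each phase is well defined, show every configuration is covered by at least one phase, and bound the overlap; this is standard but it is part of the argument, not a consequence of concentration at the two extreme points of the $B$-sum.
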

The restriction on $p$ here arises from an application of the graph container method. Our refined container method can be used to extend the above theorem to the range $p=\tilde\Omega(d^{-1/2})$. Kronenberg and Spinka prove several results about the hard-core model on $Q_{d,p}$ at activity $\lam$. The range of $\lam$ and $p$ for which these results hold may similarly be extended using the refined container methods of this paper. 

\subsection{Organization}  We collect preliminary results in \Cref{prelims} and prove our core result, \Cref{heart}, in \Cref{cont.proof}. We then give the reconstruction argument that enables us to deduce \Cref{ML} in \Cref{mlproofsec}. We prove \Cref{thm:main} in \Cref{pfthmmain} and \Cref{algos} in \Cref{pfthmalgos}. Finally, we conclude with some brief remarks in \Cref{conclusion}. 

\subsection{Notation and usage} We use $\Sigma=X\sqcup Y$ for a bipartite graph with parts $X$ and $Y$, and  $V(\Sigma)$ and $ E(\Sigma)$ for the set of vertices and edges of $\Sigma$, respectively. As usual, we write $N(x)$ for $\{y \in V(\Sigma): \{x,y\} \in E(\Sigma)\}$ and $d_A(x):=|N(x) \cap A|$. We use $d(x)$ for $d_\Sigma(x)$ for simplicity. For $S \sub V(\Sigma)$, set $N(S):= \cup_{x \in S} N(x)$ and let $\Sigma[S]$ denote the induced subgraph of $\Sigma$ on $S$. For $A, B \sub V(\Sigma)$, let $\nabla(A,B)=\{\{x,y\} \in E(\Sigma):x \in A, y \in B\},$ and $\nabla(A)=\nabla(A, V(\Sigma) \setminus A)$.

We write $\binom{n}{\leq k}$ for $\sum_{i \le k}\binom{n}{i}.$ We will make frequent use of the basic binomial estimate 
\beq{binom} {n \choose \le k} \le \exp\left\{k \log \left(\frac{en}{k}\right)\right\} \quad \text{ for } k \le n, \enq

\nin where here and throughout the paper $\log$ is used for the natural logarithm.

For two functions $f,g:\mathbb{N}\to \mathbb{R}$ we write $f(d)= \tilde O(g(d))$ if there exists $C\in \mathbb{R}$ such that $|f(d)|\leq (\log d)^C |g(d)|$ for $d$ sufficiently large. We use $\tilde \Omega$ analogously. Throughout the paper $\Sigma$ will denote a $\gd$-approximately $(d_X, d_Y)$-biregular graph (for some choice of  $d_Y\leq d_X$ and absolute constant $\delta$) and all asymptotic notation is to be understood with respect to the limits $d_X,d_Y\to\infty$. 
 
The \emph{cost} of a choice means the logarithm (in base two) of the number of its possibilities.  We employ a common abuse of notation by often omitting floor and ceiling symbols for notational convenience.
\newline

\section{Preliminaries for the proof of Lemma \ref{ML}}\label{prelims}

\subsection{Proof outline}\label{sec.sketch} We first provide a brief outline of the key ideas in the proof of \Cref{ML}. Our proof follows the outline of the original proof of the graph container result of Sapozhenko \cite{sapocontainer}, with a couple of key improvements that we outline below. We refer the reader to \cite{Galvin_independent} for more details of the original graph container lemma.

 The original proof of Sapozhenko's result for counting independent sets roughly consists of two phases, with the first phase splitting into two subphases:
\begin{enumerate}[I.]
\item Constructing containers
\begin{enumerate}[(i)]
    \item algorithmically produce `coarse' containers called $\varphi$-approximations (\Cref{lem.varphi}),
    \item algorithmically refine the coarse containers from the previous step into $\psi$-approximations (\Cref{heart}).
\end{enumerate}
\item Reconstructing independent sets from the containers produced by the $\psi$-approximating algorithm.
\end{enumerate}

Galvin's proof \cite{galvin_threshold} of the weighted graph container lemma follows the above structure of Sapozhenko's proof. Our proof also follows it at a high level, but the key to our improvement lies in improving the $\psi$-approximation algorithm. In the implementation of $\psi$-approximation, the choice of the value for $\psi$ has two implications: the cost and the fineness of the containers. A smaller value of $\psi$ will produce finer containers at higher cost. Then in the reconstruction phase, it is cheaper to reconstruct independent sets from finer containers. So there is a trade-off, and the lower bound on $\lambda=\tilde \Omega(d_X^{-1/3})$ in \cite{galvin_threshold} is obtained by optimizing all of those costs.

An ingredient that is new in our proof is that we iterate the $\psi$-approximating algorithm, with $\psi_i$ decreasing at every step. As mentioned above, decreasing the value of $\psi_i$ at each iteration increases the fineness of containers (and thus reduces their reconstruction cost). Here the key observation is that, for certain containers produced by the $\psi$-approximating algorithm, we can rerun a $\psi'$-approximating algorithm for some $\psi'<\psi$ without paying very much for this additional run. This means that (under certain circumstances) we can obtain finer containers for a cost much lower than a single implementation of $\psi$-approximation. \Cref{heart} in the next subsection, whose proof is deferred to \Cref{cont.proof}, provides a quantitative statement for the (reduced) cost of the iterative runs of $\psi$-approximating algorithm. 

Unfortunately, there are some `bad' containers for which the additional run of $\psi'$-approximation (as above) is as expensive as a single run. So in our iteration, we will have to choose the right values $\psi_i$ so that we balance the cost for iterative implementation of $\psi_i$-approximations against the reconstruction cost for the bad containers. The corresponding computation will be presented in  \Cref{mlproofsec}.

We note that the idea of iteratively applying a container algorithm appears in several applications of the (hypergraph) container method in the literature. We refer the reader to the excellent survey~\cite{balogh2018method} on the hypergraph container method for some further examples of this idea in action. 

\subsection{Preliminary results}
We first collect necessary definitions and claims for the proof of \Cref{ML}, and provide the statement of \Cref{heart} at the end as promised. We follow the notation of Galvin \cite{Galvin_independent}.

For $A \sub X$ and $\varphi\geq 0$ define $N(A)^\varphi:=\{y \in N(A):d_{[A]}(y)>\varphi\}$. Define a \textit{$\varphi$-approximation} for $A \sub X$ to be an $F \sub Y$ satisfying 
    \[N(A)^\varphi \sub F \sub N(A) \text{ and } N(F) \supseteq [A].\]
Below is a version of Sapozhenko's $\varphi$-approximation lemma which is extended for our approximate biregular setting. Recall that $\Sigma=X \sqcup Y$ is $\gd$-approximately $(d_X, d_Y)$-biregular, $\cG(a,g)=\{A \subseteq X \text{ 2-linked }: |[A]|=a \text{ and } |N(A)|=g\}$, 
$t=g-a$, $w=gd_Y-ad_X$, and $m_\varphi=\min\{|N(K)|: y \in Y, K \sub N(y), |K| >\varphi\}.$ In the remainder of this paper, we also assume that $d_X$ and $d_Y$ are sufficiently large for all of our computations to go through. 

We note that the degree assumption $d_Y \le d_X$ is crucially used in \Cref{SF_size} (see the sentences following \Cref{SF_size}). On the other hand, the proofs of \Cref{lem.varphi} and \Cref{heart} go through without the assumption $d_Y \le d_X$, however, the conclusions would need to be modified slightly depending on the relative size of $d_Y$ and $d_X$.

\begin{lem}[$\varphi$-approximation for approximately biregular graphs]\label{lem.varphi} Fix $1 \le \varphi \le \frac{d_Y}{\gd}-1$, and let $\gamma'>0$ satisfy $\gamma'\log d_X/(\varphi d_X)<1.$ 
Then there is a family $\cV=\cV(a,g,\varphi) \sub 2^Y$ with
\beq{varphi_bd}|\cV|\le |Y|\exp\left\{\frac{54g\gamma'\log d_X \log (\delta d_Xd_Y)}{\varphi d_X}+\frac{54g\log (\delta d_Xd_Y)}{(d_X)^{\gamma'm_\varphi/(\varphi d_X)}}+\frac{54w\log d_Y \log (\delta d_Xd_Y)}{d_X(d_Y/\gd-\varphi)}\right\}{\frac{3g\gamma'd_Y\log d_X}{\varphi d_X}\choose \le \frac{3w\gamma'\log d_X}{\varphi d_X}}\enq
such that each $A \in \cG(a,g)$ has a $\varphi$-approximation in $\cV.$    
\end{lem}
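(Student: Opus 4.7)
The plan is to adapt Sapozhenko's $\varphi$-approximation algorithm (in the form presented by Galvin in \cite{Galvin_independent, galvin_threshold}) from the $d$-regular setting to our $\delta$-approximately $(d_X,d_Y)$-biregular setting. Given $A \in \cG(a,g)$, I would construct a $\varphi$-approximation $F=F(A)$ together with a sequence of auxiliary choices that uniquely determine $F$; bounding the number of such sequences then gives the claimed bound on $|\cV|$.

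First, I would pick an arbitrary vertex $y_0 \in N(A)$, contributing the factor $|Y|$ in \eqref{varphi_bd}. The 2-linkedness of $A$ ensures that $N(A)$ is connected in the restriction of $\Sigma^2$ to $Y$, which enables a BFS-style exploration of $N(A)$ starting from $y_0$. Throughout the exploration we maintain sets $F^- \sub F^+ \sub Y$ with $F^- \sub N(A)$ and $N(A)^\varphi \sub F^+$, extending them by revealing new vertices and classifying them according to their degree into the current candidate for $[A]$. Vertices with many revealed neighbors can be added greedily, while vertices with few revealed neighbors require extra bits to pin down; the expansion hypothesis $m_\varphi \ge \delta''(\varphi d_X)$ is what keeps this second cost manageable.

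The three exponential factors in \eqref{varphi_bd} should arise from three contributions to this cost. The factor $\exp(54 g \gamma' \log d_X \log(\delta d_X d_Y)/(\varphi d_X))$ bounds the main BFS expansion: each step reveals roughly $\varphi d_X/\gamma'$ new neighbors of $[A]$ (so there are at most $\sim g\gamma'\log d_X/(\varphi d_X)$ steps) and each step costs about $\log(\delta d_X d_Y)$ bits for identifying the next vertex in $\Sigma^2$. The factor $\exp(54 g \log(\delta d_X d_Y)/d_X^{\gamma' m_\varphi/(\varphi d_X)})$ handles the rare ``exceptional'' steps, where one encounters a $y \in Y$ whose small-neighborhood subsets still have not appeared in the current exposure; the lower bound on $m_\varphi$ makes such encounters polynomially rare in $d_X$. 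The factor $\exp(54 w \log d_Y \log(\delta d_X d_Y)/(d_X(d_Y/\delta - \varphi)))$ handles the boundary $N(A) \sm N(A)^\varphi$, each of whose vertices contributes at least $d_Y/\delta - \varphi$ edges from $N(A)$ to $X \sm [A]$, the total number of such edges being bounded by $w$. The binomial factor at the end accounts for a final correction step, specifying which of the $\approx 3g\gamma' d_Y\log d_X/(\varphi d_X)$ candidate vertices (at most $\approx 3w\gamma' \log d_X/(\varphi d_X)$ of them, controlled again by the out-edge budget $w$) actually lie outside $N(A)$ and should be discarded.

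The main obstacle I anticipate is carefully tracking the multiplicative $\delta$ factors that appear when degrees are only approximately $d_X$ or $d_Y$, and ensuring that the expansion hypothesis $m_\varphi \ge \delta''(\varphi d_X)$ is invoked at the right place in the analysis of the exceptional steps. The algorithmic skeleton is that of \cite{Galvin_independent}, so much of the remaining effort is essentially bookkeeping: each $d$ in the regular-graph computation should become either $d_X$ or $d_Y$ (with occasional factors of $\delta^{\pm 1}$) in our setting, and one verifies that these factors combine into the stated bound \eqref{varphi_bd}.
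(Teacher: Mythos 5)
Your factor-by-factor accounting of \eqref{varphi_bd} is close to the truth, but the mechanism you propose --- a deterministic BFS-style exploration of $N(A)$ in $\Sigma^2$ in which ``exceptional'' vertices are polynomially rare --- is not substantiated and differs from how the lemma is actually proved. The paper's proof (following \cite{BalGLW21}, and this is also the skeleton of Sapozhenko's and Galvin's first-stage argument, which is probabilistic rather than exploratory; the greedy, degree-based algorithm you seem to have in mind is the \emph{second} stage, i.e.\ the $\psi$-approximation of \Cref{lem.psi}) chooses a \emph{random} seed set $\mathbf{\tilde T}\sub N(A)$, including each vertex independently with probability $p=\gamma'\log d_X/(\varphi d_X)$, and uses Markov's inequality to fix one $T_0$ simultaneously satisfying $|T_0|\le 3gp$, $e(T_0,X\sm[A])\le 3wp$ and $|N(A)^\varphi\sm N(N_{[A]}(T_0))|\le 3ge^{-pm_\varphi}$. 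The expansion hypothesis enters only through the expectation computation ($|N(N_{[A]}(y))|\ge m_\varphi$ for $y\in N(A)^\varphi$); it is this averaging step that makes the uncovered part of $N(A)^\varphi$ small. In a deterministic exploration you have no tool for asserting that only a $d_X^{-\gamma' m_\varphi/(\varphi d_X)}$-fraction of steps are exceptional, and your sketch supplies none; this is the central gap.

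Two further ingredients are missing. First, you never address the covering requirement $N(F)\supseteq[A]$ in the definition of a $\varphi$-approximation: in the paper this is the role of the extra set $T_1$, obtained from the Lov\'asz--Stein covering lemma (\Cref{lem.LS}) applied between $[A]\sm N(L)$ and $N(A)\sm L$, and it is exactly this step that produces the $\log d_Y$ in the third exponential term (your explanation accounts for the denominator $\delta^{-1}d_Y-\varphi$ and the budget $w$, but not for the logarithm nor for why the retained boundary vertices dominate $[A]$). Second, the counting is not done by pricing BFS steps: one shows $T=T_0\cup T_0'\cup T_1$ is $8$-linked and counts its possible values via \Cref{linked} (this is where the factor $|Y|$ and the $\log(\delta d_Xd_Y)$ terms come from), and then $F'$ is reconstructed from $(T_0,T_0',T_1,\Omega)$, where the binomial factor counts the possible edge sets $\Omega=\nabla(T_0,X\sm[A])$ --- not, as you suggest, a set of candidate vertices to discard. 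So while your heuristics point at the right bound, the actual proof needs the random-seed/Markov argument, the Lov\'asz--Stein covering step, and the linked-set counting, none of which appear in your plan.
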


\nin The proof of \Cref{lem.varphi} follows almost identically to that of \cite[Lemma A.3]{BalGLW21}, with the exception that we replace the biregular assumption by approximate biregularity. For completeness, we give a proof in Appendix~\ref{app:varphi}. 

The following definition originates from the notion of $\psi$-approximation of Sapozhenko. We relax this definition to allow the separate cutoffs $\psi_X$ and $\psi_Y$, and the values of $(\psi_X, \psi_Y)_i$ will evolve in the iteration of $\psi_i$-approximations.

\begin{mydef}\label{def:psi-approximation}
    For $1 \le \psi_X\leq d_X-1$ and $1\leq \psi_Y \le \frac{d_Y}{\delta}-1$, a \textit{$(\psi_X, \psi_Y)$-approximating pair} for $A \sub X$ is a pair $(F,S) \in 2^Y \times 2^X$ satisfying $F \sub N(A)$, $S \supseteq [A]$,
    \beq{S.degree} d_F(u) \ge d(u)-\psi_X \quad \forall u \in S, \text{ and}\enq
    \beq{F.degree} d_{X \setminus S}(v) \ge d(v)-\psi_Y \quad \forall v \in Y \setminus F.\enq
\end{mydef}

\begin{prop}\label{SF_size}
    If $(F,S)$ is a $(\psi_X, \psi_Y)$-approximating pair for $A \in \cG(a,g)$, then, with $s:=|S|$ and $f:=|F|,$
    \beq{eq.SF} s \le f+\frac{1}{d_X}[(g-f)\psi_Y+(s-a)\psi_X].\enq
\end{prop}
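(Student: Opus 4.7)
The plan is a double-counting argument on the edges between $S$ and $Y$, split across the partition $Y = F \sqcup (Y \setminus F)$. Since $S \subseteq X$ and $\Sigma$ is $\gd$-approximately $(d_X, d_Y)$-biregular, every $u \in S$ has $d(u) \geq d_X$, so
\[ s\, d_X \;\leq\; \sum_{u \in S} d(u) \;=\; |\nabla(S, F)| + |\nabla(S, Y \setminus F)|.\]
My goal is to show that the first summand on the right is at most $f d_X$ and the second is at most $(g-f)\psi_Y + (s-a)\psi_X$; dividing by $d_X$ then delivers \eqref{eq.SF}.

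For the first bound, I would write $|\nabla(S, F)| = \sum_{v \in F} d_S(v) \leq \sum_{v \in F} d(v) \leq f d_Y$, and then invoke the standing hypothesis $d_Y \leq d_X$ to upgrade this to $f d_X$.

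For the second bound, I would further decompose along $S = [A] \sqcup (S \setminus [A])$. On $S \setminus [A]$, the approximating-pair condition \eqref{S.degree} yields $d_{Y \setminus F}(u) = d(u) - d_F(u) \leq \psi_X$ for every $u \in S$, contributing at most $(s-a)\psi_X$. On $[A]$, the definition of the closure gives $N([A]) \subseteq N(A)$, so every edge from $[A]$ to $Y \setminus F$ has its $Y$-endpoint in $N(A) \setminus F$. Since $N(A)\setminus F \subseteq Y \setminus F$, condition \eqref{F.degree} applies to each such $v$ and gives $d_{[A]}(v) \leq d_S(v) \leq \psi_Y$; summing over the $g-f$ vertices of $N(A) \setminus F$ yields $|\nabla([A], Y \setminus F)| \leq (g-f)\psi_Y$.

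No substantive obstacle is anticipated; the only conceptual point is that one must split $|\nabla(S, Y\setminus F)|$ according to whether the $S$-endpoint lies in $[A]$. Outside $[A]$ one uses the $\psi_X$-degree bound on $S$-vertices directly, whereas inside $[A]$ one must first exploit the closure to force the $Y$-endpoint into $N(A)\setminus F$ before the $\psi_Y$-degree bound on $Y\setminus F$-vertices becomes available. The trade between the $d_Y$- and $d_X$-bounds on $|\nabla(S,F)|$ is the only spot where the asymmetry $d_Y \leq d_X$ of the biregularity hypothesis is used.
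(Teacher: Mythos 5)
Your proof is correct and follows essentially the same argument as the paper: double-count the edges leaving $S$, bound $|\nabla(S,F)|\leq f d_Y\leq f d_X$, and split $|\nabla(S,Y\setminus F)|$ along $S=[A]\sqcup(S\setminus[A])$, using \eqref{S.degree} for $S\setminus[A]$ and the closure property together with \eqref{F.degree} for the edges from $[A]$ into $N(A)\setminus F$. No gaps; this matches the paper's proof of Proposition~\ref{SF_size}.
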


\begin{proof}
Observe that, with $\bar F:=Y \setminus F$,
\[|\nabla(S, \bar F)|=|\nabla([A], N(A) \setminus F)|+|\nabla(S \setminus [A], \bar F)| \stackrel{\eqref{S.degree},\eqref{F.degree}}{\le} (g-f)\psi_Y+(s-a)\psi_X.\]
Therefore,
\[s d_X\leq|\nabla S|=|\nabla(S,F)|+|\nabla(S, \bar F)| \le d_Y f+[(g-f)\psi_Y+(s-a)\psi_X],\]
which yields the conclusion by recalling that $d_Y\leq d_X$.
\end{proof}

We highlight that Proposition~\ref{SF_size} makes crucial use of the assumption $d_Y\leq d_X$. In our applications, Proposition~\ref{SF_size} will allow us to conclude that $s \le (1+o(1))f$ which is key to the proof of \Cref{ML}.

For future reference, we record a very loose upper bound on $|S|$ that follows from the above proposition: by applying the bounds $\psi_X, \psi_Y\leq d_X-1$ to \eqref{eq.SF}, we have $s \le g-(g-f)/d_X+(s-a)(1-1/d_X)\le g+(1-1/d_X)s,$ from which it follows that
\beq{s_loose} s \le d_Xg. \enq

\Cref{heart} below is at the heart of the proof of \Cref{ML}. As we sketched in \Cref{sec.sketch}, the proof of \Cref{heart} is based on the idea of the iterative run of $\psi_i$-approximating algorithms. For the parameter $i$ in the statement below, it can be understood that the corresponding conclusions are derived from the $i$-th run.

\begin{lem}[Iterative $\psi$-approximation]\label{heart} 
For any sufficiently large absolute constant $\gamma$, the following holds. Fix $1\leq \varphi\leq \frac{d_Y}{\gd}-1$. Fix  $F'\subseteq Y$ and write $\cG(a,g,F')$ for the collection of $A \in \cG(a,g)$ for which $F'$ is a $\varphi$-approximation. Then there exists a family $\mathcal{F}=\mathcal{F}(a, g, F')\subseteq 2^Y \times 2^X$ that satisfies the following properties: with
\beq{kap} \kappa:=\max\{i:2^i \le \sqrt{d_X}\},\enq
we have that
\begin{enumerate}
    \item $\cF=\bigcup_{i \in [0, \kappa]} \cF_i \left(=\bigcup_{i \in [0,\kappa]}\cF_i(a,g,F')\right)$ where for each $i \in [0,\kappa]$,
    \[|\cF_i|\le {d_Yg \choose \le \frac{\delta \gamma w}{(d_Y-\delta \varphi)d_X}}{\delta d_X d_Y g \choose \le \frac{2\gamma w}{d_Xd_Y}} \exp\left[ O\left(\frac{it\gamma \log(gd_X^2/t)}{d_X}\right)\right];\]
     \item if $(F,S)\in\cF_i$, then $(F,S)$ is a $(d_X/(2^i\gamma), d_Y/\gamma)$-approximating pair for some $A \in \cG(a,g,F')$; in addition, for $i \in [0, \kappa-1]$, every $(F, S) \in \cF_i$ satisfies $|F|<g-t/2^i$; and
    \item every $A \in \cG(a,g,F')$ has a  $(d_X/(2^i\gamma), d_Y/\gamma)$-approximating pair in $\cF_i$ for some $i \in [0, \kappa]$.
\end{enumerate}
\end{lem}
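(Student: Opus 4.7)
I would construct the families $\cF_0, \cF_1, \dots, \cF_\kappa$ by induction on $i$, running a $\psi$-approximation algorithm at level $i$ with parameters $\psi_X^{(i)} := d_X/(2^i \gamma)$ and $\psi_Y := d_Y/\gamma$ to produce, for each $A \in \cG(a, g, F')$, a $(\psi_X^{(i)}, \psi_Y)$-approximating pair $(F_i, S_i)$. A pair is placed into $\cF_i$ whenever the coarseness invariant $|F_i| < g - t/2^i$ holds, and always at $i = \kappa$ (where no such restriction is imposed), so that properties~(2) and~(3) follow by construction.

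\textbf{Base case.} For $\cF_0$, I would follow the Sapozhenko--Galvin $\psi$-approximation construction (see \cite{Galvin_independent}) adapted to the $\delta$-approximately biregular setting. Starting from the $\varphi$-approximation $F'$, greedily augment to obtain $F_0 \subseteq N(A)$ by identifying high-deficiency vertices, and set $S_0 := \{x \in X : d_{F_0}(x) \geq d(x) - d_X/\gamma\}$. The expansion hypothesis $m_\varphi \geq \delta'' (\varphi d_X)$ is what guarantees $[A] \subseteq S_0$. The stated binomial factors $\binom{d_Y g}{\le \delta \gamma w/((d_Y - \delta \varphi) d_X)}$ and $\binom{\delta^2 d_X^2 d_Y^2 g}{\le 2 \gamma w/(d_X d_Y)}$ arise, respectively, from encoding the vertices added to $F'$ (via an edge-choice argument driven by the quantity $w$) and the local corrections needed to form $S_0$; at $i = 0$ the exponential factor is trivial.

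\textbf{Inductive refinement.} For $i \geq 1$, given a coarse pair $(F_{i-1}, S_{i-1}) \in \cF_{i-1}$ with $|F_{i-1}| < g - t/2^{i-1}$, I would produce $(F_i, S_i)$ in three steps: (a) identify the set $\cU \subseteq S_{i-1}$ consisting of vertices whose deficiency exceeds the tightened threshold $\psi_X^{(i)}$; (b) for each $u \in \cU$, encode which of its neighbors in $Y \setminus F_{i-1}$ lie in $N(A)$; (c) augment $F_{i-1}$ with those vertices to obtain $F_i \supseteq F_{i-1}$ and set $S_i := \{x \in X : d_{F_i}(x) \geq d(x) - \psi_X^{(i)}\}$. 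Again the expansion via $m_\varphi$ is used to ensure $[A] \subseteq S_i$. Each refinement should contribute a multiplicative factor of $\exp[O(t \gamma \log(g d_X^2/t)/d_X)]$ to the count, so iterating at most $i$ times accumulates to exactly the stated exponential factor.

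\textbf{Main obstacle.} The crux is bounding the per-iteration encoding cost, and in particular the size of $\cU$. A naive bound $\binom{|S_{i-1}|}{|\cU|}$ combined with the loose estimate $|S| \leq d_X g$ from \eqref{s_loose} is far too expensive. The right bound should come from carefully double-counting edges between $S_{i-1}$ and $Y \setminus F_{i-1}$, exploiting the $\psi_Y$-degree condition, the excess $w = g d_Y - a d_X$ bounding edges from $N(A)$ to $X \setminus [A]$, and the coarseness invariant $|F_{i-1}| < g - t/2^{i-1}$ so that the relevant deficiencies scale with $t$ rather than with $g$. Matching the precise form of the exponent—the $\log(gd_X^2/t)$ factor together with the clean $t/d_X$ scaling—while keeping $[A] \subseteq S_i$ throughout the iterations is the delicate part, and it is here that the choice of stopping index $\kappa$ with $2^\kappa \leq \sqrt{d_X}$ is dictated: going beyond this threshold would make the encoding cost overwhelm the savings passed on to the reconstruction phase of \Cref{ML}.
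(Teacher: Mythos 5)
There is a genuine gap, and it is located exactly where your ``Main obstacle'' paragraph admits uncertainty: the selection of which pairs get refined at each stage is inverted relative to what makes the iteration affordable. You propose to refine the pairs $(F_{i-1},S_{i-1})$ satisfying $|F_{i-1}|<g-t/2^{i-1}$, i.e.\ the pairs with \emph{large} deficiency $g-|F_{i-1}|$, and you hope that this ``coarseness invariant'' will make the encoding cost scale with $t$. But $|F_{i-1}|<g-t/2^{i-1}$ is a \emph{lower} bound on $g-|F_{i-1}|$; it gives no control at all on the number of vertices of $N(A)$ still missing from $F_{i-1}$, which can be of order $g$, so the cost of re-running the approximation on these pairs is comparable to a full fresh run and the accumulated bound $\exp[O(it\gamma\log(gd_X^2/t)/d_X)]$ is out of reach by this route. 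The paper does the opposite: at stage $i$ it re-runs the algorithm only on the pairs with $|F|\ge g-t/2^i$ (the set $\cU_i$), because for those the additional run (Lemma~\ref{lem.heart}) has cost governed by $\binom{|S|}{\le (g-|F|)/\psi_X'}$ with $(g-|F|)/\psi^X_{i+1}\le (t/2^i)\cdot 2^{i+1}\gamma/d_X=2t\gamma/d_X$, independent of $i$, and with the ground set $S$ of size at most $2g$ (by \eqref{s.t.bd}); this is precisely Claim~\ref{iteration.cost}. The pairs with $|F|<g-t/2^i$ are \emph{not} refined further --- they are frozen into $\cF_i$ --- and their large value of $g-|F|$ is not a cost to be paid but a gain to be cashed in later, in the reconstruction step (Lemma~\ref{small.F}), where $g-s\ge 0.9(g-f)$ yields the factor $(1+\lambda)^{-0.9(g-f)}$.

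Two smaller points. First, the containment $[A]\subseteq S$ does not come from the expansion hypothesis $m_\varphi\ge\delta''\varphi d_X$: it is automatic from the algorithm (after Step 1 every $u\in[A]$ has $d_{N(A)\setminus F}(u)\le\psi_X$, hence $d_F(u)\ge d(u)-\psi_X$, and Step 2 only deletes neighbourhoods of vertices in $Y\setminus N(A)$, which miss $[A]$). The $m_\varphi$ hypothesis is used only in the construction of the $\varphi$-approximations (Lemma~\ref{lem.varphi}), which is taken as given in Lemma~\ref{heart}. Second, because you refine the frozen pairs rather than the unfrozen ones, your construction does not actually guarantee property (2)'s constraint $|F|<g-t/2^i$ for the members of $\cF_i$ together with property (3) at a uniform cost; in the paper's scheme both follow immediately from the definition $\cF_i=\cW_i\setminus\cU_i$ for $i<\kappa$ and $\cF_\kappa=\cW_\kappa$. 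Your base case (the initial run with $\psi^X_0=d_X/\gamma$, $\psi^Y_0=d_Y/\gamma$, giving the two stated binomial factors) does match the paper's Lemma~\ref{lem.psi}, but the heart of the lemma is the per-iteration cost bound, and the mechanism you sketch for it would fail.
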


Noting that $\kappa \le \log d_X$ and $t\log^2 d_X/d_X$ is bounded away from $0$ (by the second lower bound on $t$ in \Cref{ML}), it follows that for any given $a,g$ and any $F' \in \cV(a,g,\varphi)$,
\beq{heart.bd}\sum_{i \in [0, \kappa]}|\cF_i(a,g,F')|\le {d_Yg \choose \le \frac{\delta \gamma w}{(d_Y-\delta \varphi)d_X}}{\delta d_X d_Y g \choose \le \frac{2\gamma w}{d_Xd_Y}} \exp\left[ O\left(\frac{t\gamma\log d_X \log(gd_X^2/t)}{d_X}\right)\right].\enq

\section{Proof of \Cref{heart}}\label{cont.proof}

Again, the proof of \Cref{heart} is based on an iterative $\psi$-approximating algorithm. Lemmas \ref{lem.psi} and \ref{lem.heart} below bound the cost for the initial and an additional run of the $\psi$-approximating algorithm, respectively. Crucially, we obtain the bound in \eqref{psi'_bd} which will be significantly smaller than that in \eqref{psi_bd} in our setting, which is a key to our improvement.

\begin{lem}[Cost for initial run]\label{lem.psi} For any $F' \in \cV(a,g,\varphi)$ and $1 \le \psi_X \leq d_X-1, \ 1\leq \psi_Y \le d_Y/\delta-1$ there is a family $\cW=\cW(F', \psi_X, \psi_Y) \sub 2^Y \times 2^X$ with
\beq{psi_bd}|\cW|\le {d_Yg \choose \le \delta w/((d_Y-\delta \varphi)\psi_X)}{\delta d_X d_Y g \choose \le w/((d_X-\psi_X)\psi_Y)}\enq
such that any $A \in \cG(a,g)$ for which $F'$ is a $\varphi$-approximation has a $(\psi_X, \psi_Y)$-approximating pair in $\cW.$
\end{lem}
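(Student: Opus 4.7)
For each $A\in\cG(a,g)$ with $F'$ a $\varphi$-approximation, the plan is to exhibit an algorithmic encoding that produces a $(\psi_X,\psi_Y)$-approximating pair $(F,S)$ from two auxiliary sets. The two selection steps in the algorithm will contribute the two binomial factors in \eqref{psi_bd}, so that distinct $A$ with a common $F'$ are encoded into distinct pairs $(F,S)\in\cW$.

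\textbf{Step 1 ($X$-side correction).} The first ingredient is the edge count
\[
\sum_{v\in N(A)}d_{X\sm[A]}(v)\ =\ \sum_{v\in N(A)}d(v)-\sum_{u\in[A]}d(u)\ \le\ gd_Y-ad_X\ =\ w,
\]
which uses $N(u)\sub N(A)$ for $u\in[A]$. Since every $v\in N(A)\sm F'\sub N(A)\sm N(A)^{\varphi}$ satisfies $d_{[A]}(v)\le\varphi$ and $d(v)\ge d_Y/\gd$, each such $v$ sends at least $(d_Y-\gd\varphi)/\gd$ edges into $X\sm[A]$; combining with the previous display gives $|N(A)\sm F'|\le \gd w/(d_Y-\gd\varphi)$. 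I would then encode the correction $F\sm F'\sub N(A)\sm F'$ needed to enforce \eqref{S.degree} by a $\psi_X$-amortized scheme: a vertex $u\in[A]$ that fails \eqref{S.degree} with respect to $F'$ has more than $\psi_X$ witness edges into $N(A)\sm F'$, so such vertices can be charged to blocks of $\psi_X$ edges drawn from the ground set $\nabla(F',X)$, whose size is at most $gd_Y$. Dividing the $X\sm[A]$ edge count by $\psi_X$ caps the total selection size at $\gd w/((d_Y-\gd\varphi)\psi_X)$, producing the first binomial factor.

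\textbf{Step 2 ($Y$-side correction).} With $F$ fixed, $S$ is determined up to a small family of $X$-vertices whose $F$-neighborhood is almost full. A dual argument controls the $v\in Y\sm F$ that could violate \eqref{F.degree}: any such $v$ has more than $\psi_Y$ neighbors in $S$, and the associated witness edges can be drawn from a two-hop neighborhood of $F'$ whose cardinality one can bound by $O(\gd^2d_X^2d_Y^2g)$. A symmetric edge count (this time on the $Y$-side, bounding the total discrepancy $\sum_{u\in S}(d(u)-d_F(u))$ by $w$ up to constants) combined with $\psi_Y$-amortization gives a selection of size at most $w/((d_X-\psi_X)\psi_Y)$, which is the second binomial factor. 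The pair $(F,S)$ is then reconstructed deterministically from the two selections, so $|\cW|$ is bounded by the product of the two binomial coefficients as required.

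\textbf{Main obstacle.} The principal difficulty will be designing canonical witness-selection rules in both steps so that (i) the map $A\mapsto(F,S)$ is well-defined and injective into $\cW$, (ii) the resulting $(F,S)$ provably satisfies both \eqref{S.degree} and \eqref{F.degree}, and (iii) no double counting occurs across the two phases. In particular, the choice of auxiliary sets must be extractable from $(F,S)$ alone (or from $(F',F,S)$) without further reference to $A$. Once this bookkeeping is in place, the counting is a direct product of the two binomial bounds above.
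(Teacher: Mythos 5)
Your outline is essentially the paper's argument: a two-stage algorithm that first augments $F'$ to repair the $X$-side condition \eqref{S.degree} ($\psi_X$-amortized against $N(A)\setminus F'$, whose size is at most $\delta w/(d_Y-\delta\varphi)$ exactly as you compute), then trims a candidate set $S$ to repair \eqref{F.degree} ($\psi_Y$-amortized), with the two binomial factors counting the two selection sequences. But two pieces of your bookkeeping must be corrected. First, drop the demand that distinct $A$ be encoded into distinct pairs: injectivity is neither needed nor available, since a single pair $(F,S)$ is a container that typically covers very many $A\in\cG(a,g)$ sharing the $\varphi$-approximation $F'$. What bounds $|\cW|$ is that the algorithm is canonical (always pick the $\prec$-least violating vertex), so its output is determined by $F'$ together with the sequence of selected vertices; the first factor counts selections of at most $\delta w/((d_Y-\delta\varphi)\psi_X)$ vertices $u$ from $N(F')\supseteq[A]$, a set of size at most $d_Yg$, each selected $u$ being processed via $F\leftarrow F\cup N(u)$ and thereby deleting at least $\psi_X$ vertices of $N(A)\setminus F'$. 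Note also that the witness edges of a violating $u$ go into $N(A)\setminus F'$ and are not contained in $\nabla(F',X)$, so the ``blocks of $\psi_X$ edges drawn from $\nabla(F',X)$'' charging does not parse as stated; one selects the vertices $u$ themselves.

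Second, in your Step 2 the quantity $\sum_{u\in S}(d(u)-d_F(u))$ is not the one that drives the count. The correct accounting is: every $u\in S\setminus[A]$ has $d_F(u)\ge d(u)-\psi_X\ge d_X-\psi_X$, and since $F\subseteq N(A)$ and $u\notin[A]$ all these edges lie in $\nabla(N(A),X\setminus[A])$, a set of size at most $w$; hence $|S\setminus[A]|\le w/(d_X-\psi_X)$. Each selected $v\in Y\setminus N(A)$ satisfies $N(v)\cap[A]=\emptyset$ (so $S\supseteq[A]$ is preserved) and removes more than $\psi_Y$ vertices of $S\setminus[A]$, giving at most $w/((d_X-\psi_X)\psi_Y)$ selections, drawn from $N(S)$, which sits in the fourth (not second) neighbourhood of $F'$ and has size at most $\delta^2 d_X^2 d_Y^2 g$. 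Finally, \eqref{psi_bd} leaves no room for an ``up to constants'' edge bound, so these counts must be carried out exactly; with these repairs your proposal coincides with the paper's proof.
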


\begin{proof}
    The proof goes very similarly to that of \cite[Lemma 5.5]{Galvin_independent} with the distinction between $\psi_X$ and $\psi_Y$ and different degrees. Fix any ordering $\prec$ on $V(\Sigma).$

\begin{quote}
    \nin \textbf{Input.} $A \in \cG(a,g)$ and its $\varphi$-approximation $F' \in \cV(a,g,\varphi)$

    \nin \textbf{Step 1.}  If $\{u \in [A]:d_{N(A)\setminus F'}(u)>\psi_X\} \ne \emptyset,$ pick the smallest $u$ (in $\prec$) in this set and update $F'$ by $F' \leftarrow F' \cup N(u).$ Repeat this until $\{u \in [A]:d_{N(A) \setminus F'}(u)>\psi_X\}=\emptyset.$ Then set $F''=F'$ and $S''=\{u \in X:d_{F''}(u) \ge d(u)-\psi_X\}$ and go to Step 2.

    \nin \textbf{Step 2.} If $\{v \in Y \setminus N(A): d_{S''}(v)>\psi_Y\} \ne \emptyset$, pick the smallest $v$ (in $\prec$) in this set and update $S''$ by $S'' \leftarrow S'' \setminus N(v).$ Repeat this until $\{v \in Y \setminus N(A):d_{S''}(v)>\psi_Y\}=\emptyset.$ Set $S=S''$ and $F=F'' \cup \{v \in Y:d_{S}(v)>\psi_Y\}$ and stop.

    \nin \textbf{Output.} $(F, S)$
\end{quote}

    We first check that the resulting $(F, S)$ is a $(\psi_X, \psi_Y)$-approximating pair for $A \in \cG(a,g)$ (see Definition~\ref{def:psi-approximation}). To see that $F\subseteq  N(A)$, note that $F'\subseteq N(A)$ at the start of the algorithm (by the definition of $\varphi$-approximation) and each element added to $F'$ in Step $1$ belongs to $N([A])=N(A)$. Moreover, $\{v \in Y: d_S(v)>\psi_Y\}\subseteq N(A)$ else Step 2 would not have terminated. To see that $S\supseteq [A]$ note that initially  $S''\supseteq [A]$, else Step 1 would not have terminated, and Step 2 deletes from $S''$ only neighbours of $Y\backslash N(A)$. To verify~\eqref{S.degree} note that at the end of Step 1, $d_{F''}(u)\geq d(u)-\psi_X$ for all $x\in S''$ by definition, and $F\supseteq F''$ and $S$ is contained in this initial set $S''$. Condition~\eqref{F.degree} is immediate from the definition of $F$.
    
    Next, We show that the number of outputs for each $F'$ is at most the right-hand side of \eqref{psi_bd}. 

    \nin \textit{Cost for Step 1.} Initially, $|N(A) \setminus F'| \le \delta w/(d_Y-\delta\varphi)$ (each vertex in $N(A) \setminus F'$ is in $N(A) \setminus N(A)^\varphi$ and so contributes at least $(\delta^{-1}d_Y-\varphi)$ edges to $\nabla(N(A), X \setminus [A])$, a set of size $\leq gd_Y-ad_X=w$). Each iteration in Step 1 removes at least $\psi_X$ vertices from $N(A) \setminus F'$ and so there can be at most $\delta w/((d_Y-\delta\varphi)\psi_X)$ iterations. The $u$'s in Step 1 are all drawn from $[A]$ and hence $N(F')$, a set of size at most $d_Yg$. So the total number of outputs for Step 1 is at most \[{d_Yg \choose \le \delta w/((d_Y-\delta\varphi)\psi_X)}.\]

    \nin \textit{Cost for Step 2.} Each $u \in S'' \setminus [A]$ contributes more than $d_X-\psi_X$ edges to $\nabla(N(A), X \setminus [A])$, so initially $|S'' \setminus [A]| \le w/(d_X-\psi_X).$
    
    Each $v$ used in Step 2 reduces this by at least $\psi_Y,$ so there are at most $w/((d_X-\psi_X)\psi_Y)$ iterations. Each $v$ is drawn from $N(S''),$ a set which is contained in the second neighbourhood of $F''$ (indeed $S''\subseteq N(F'')$ by definition) and so has size at most $\delta d_Xd_Y g.$ So the total number of outputs for Step 2 is at most
    \[{\delta d_X d_Y g \choose \le w/((d_X-\psi_X)\psi_Y)}. \qedhere \]
\end{proof}

\begin{lem}[Cost for additional run]\label{lem.heart} Let $\cW=\cW(F', \psi_X, \psi_Y)$ be as in \Cref{lem.psi}.
    For each $(F,S) \in \cW$, if $1 \le \psi_X'\leq \psi_X, \ 1\leq \psi_Y' \le \psi_Y$, then there is a family $\cW'=\cW'(F,S) \sub 2^Y \times 2^X$ with
    \beq{psi'_bd}|\cW'|\le {|S| \choose \le (g-|F|)/\psi_X'}{\delta d_X|S| \choose \le w/((d_X-\psi_X')\psi_Y')}\enq
    such that any $A \in \cG(a,g)$ for which $(F,S)\in \cW$ is a $(\psi_X, \psi_Y)$-approximating pair has a $(\psi_X', \psi_Y')$-approximating pair in $\cW'.$
\end{lem}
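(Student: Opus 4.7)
The plan is to mimic the algorithm in the proof of \Cref{lem.psi}, but starting the $(\psi_X',\psi_Y')$-refinement from the already-constructed pair $(F,S)$ instead of the raw $\varphi$-approximation $F'$. Fix an arbitrary order $\prec$ on $V(\Sigma)$ and set $F''\coloneqq F$. In Step 1, while there exists $u\in[A]$ with $d_{N(A)\setminus F''}(u)>\psi_X'$, pick the $\prec$-smallest such $u$ and update $F''\leftarrow F''\cup N(u)$; upon termination, define $S''\coloneqq\{u\in S:d_{F''}(u)\ge d(u)-\psi_X'\}$. In Step 2, while there exists $v\in Y\setminus N(A)$ with $d_{S''}(v)>\psi_Y'$, pick the $\prec$-smallest such $v$ and update $S''\leftarrow S''\setminus N(v)$. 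Finally output $(F^*,S^*)$ with $S^*\coloneqq S''$ and $F^*\coloneqq F''\cup\{v\in Y:d_{S^*}(v)>\psi_Y'\}$. The crucial design choice — differing from \Cref{lem.psi} — is to intersect the new $S''$ with the input $S$, which forces $S''\subseteq S$ throughout Step 2 and thereby keeps the Step 2 pool $N(S'')\subseteq N(S)$ small.

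I would then verify that $(F^*,S^*)$ is a valid $(\psi_X',\psi_Y')$-approximating pair for $A$. The inclusion $S^*\supseteq[A]$ follows from $[A]\subseteq S$ (since $(F,S)$ is a $(\psi_X,\psi_Y)$-pair) together with the termination of Step 1, which gives $d_{F''}(u)\ge d(u)-\psi_X'$ for every $u\in[A]$. For $F^*\subseteq N(A)$, note that $F''\subseteq F\cup N([A])\subseteq N(A)$, and $\{v:d_{S^*}(v)>\psi_Y'\}\subseteq N(A)$ by termination of Step 2. The two degree conditions defining a $(\psi_X',\psi_Y')$-pair are immediate from the definitions of $S''$ and $F^*$.

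For the counting, in Step 1 we have $|N(A)\setminus F''|=g-|F|$ initially and each iteration removes more than $\psi_X'$ vertices from $N(A)\setminus F''$, so at most $(g-|F|)/\psi_X'$ iterations occur; since each $u$ is drawn from $[A]\subseteq S$, Step 1 contributes the factor $\binom{|S|}{\le(g-|F|)/\psi_X'}$. For Step 2, the same edge-counting trick as in \Cref{lem.psi} yields $|S''\setminus[A]|\le w/(d_X-\psi_X')$ initially, since each $u\in S''\setminus[A]$ contributes at least $d_X-\psi_X'$ edges to $\nabla(N(A),X\setminus[A])$, a set of size $\le w$; each iteration then removes at least $\psi_Y'$ vertices from $S''\setminus[A]$ because $v\notin N(A)$ implies $N(v)\cap[A]=\emptyset$. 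Hence Step 2 runs for at most $w/((d_X-\psi_X')\psi_Y')$ iterations, with each $v$ drawn from $N(S'')\subseteq N(S)$, a set of size at most $\delta d_X|S|$. Multiplying the two costs gives \eqref{psi'_bd}.

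The main point requiring care is that restricting $S''\subseteq S$ in the definition above does not spoil $[A]\subseteq S''$: this is precisely ensured by the dual facts that $[A]\subseteq S$ (hypothesis on $(F,S)$) and $d_{F''}(u)\ge d(u)-\psi_X'$ for every $u\in[A]$ at the end of Step 1. Once this is in place, the remainder is a routine transcription of the proof of \Cref{lem.psi}; the improvement in \eqref{psi'_bd} over \eqref{psi_bd} arises from three sources enabled by starting with $(F,S)$ in hand — replacing the coarse bound $\delta w/(d_Y-\delta\varphi)$ on $|N(A)\setminus F'|$ by the exact quantity $g-|F|$, shrinking the Step 1 pool from $N(F')$ down to $S$, and shrinking the Step 2 pool from the fourth neighbourhood of $F'$ down to $N(S)$.
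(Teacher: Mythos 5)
Your proposal is correct and is essentially identical to the paper's proof: the paper also reruns the two-step approximation algorithm with the new thresholds $(\psi_X',\psi_Y')$ starting from $(F,S)$, defines the refined set as $\{u\in S: d_{\hat F}(u)\ge d(u)-\psi_X'\}$ (the same restriction to $S$ you highlight), and counts exactly as you do, with the Step 1 pool $[A]\subseteq S$ and the Step 2 pool $N(\hat S)\subseteq N(S)$ giving the two binomial factors in \eqref{psi'_bd}.
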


\begin{proof}
    For $(F,S)$ as in the statement of the lemma, we additionally run the algorithm in the proof of \Cref{lem.psi} with $\psi_X'$ and $\psi_Y'$.
\begin{quote}
    \nin \textbf{Input.} $A \in \cG(a,g)$ and its $(\psi_X, \psi_Y)$-approximating pair $(F,S)$

    \nin \textbf{Step 1'.}  If $\{u \in [A]:d_{N(A)\setminus F}(u)>\psi_X'\} \ne \emptyset,$ pick the smallest $u$ (in $\prec$) in this set and update $F$ by $F \leftarrow F \cup N(u).$ Repeat this until $\{u \in [A]:d_{N(A) \setminus F}(u)>\psi_X'\}=\emptyset.$ Then set $\hat F=F$ and $\hat S=\{u \in S:d_{\hat F}(u) \ge d(u)-\psi_X'\}$ and go to Step 2'.

    \nin \textbf{Step 2'.} If $\{v \in Y \setminus N(A): d_{\hat S}(v)>\psi_Y'\} \ne \emptyset$, pick the smallest $v$ (in $\prec$) in this set and update $\hat S$ by $\hat S \leftarrow \hat S \setminus N(v).$ Repeat this until $\{v \in Y \setminus N(A):d_{\hat S}(v)>\psi_Y'\}=\emptyset.$ Then set $\tilde S=\hat S$ and $\tilde F=\hat F \cup \{v \in Y:d_{\tilde S}(v)>\psi_Y'\}$ and stop.

    \nin \textbf{Output.} $(\tilde F, \tilde S)$
\end{quote}
It is easy to see that $(\tilde F, \tilde S)$ is a $(\psi_X', \psi_Y')$-approximating pair for $A \in \cG(a,g)$. We bound the number of outputs for each $(F,S)$. Write $f=|F|.$

    \nin \textit{Cost for Step 1'.} Each iteration in Step 1' removes at least $\psi_X'$ vertices from $N(A) \setminus F$, a set of size $g-f$, and so there can be at most ${(g-f)}/{\psi_X'}$ iterations. The $u$'s in Step 1' are all drawn from $[A]$ and hence from $S$. So the total number of outputs for Step 1' is at most \[{|S| \choose \le (g-f)/\psi_X'}.\]

    \nin \textit{Cost for Step 2'.} Each $u \in \hat S \setminus [A]$ contributes more than $d_X-\psi_X'$ edges to $\nabla(N(A), X \setminus [A])$, so initially $|\hat S \setminus [A]| \le w/(d_X-\psi_X').$ Each $v$ used in Step 2' reduces this by at least $\psi_Y',$ so there are at most $w/((d_X-\psi_X')\psi_Y')$ iterations. Each $v$ is drawn from $N(\hat S),$ a set which is contained in $N(S)$ and so has size at most $\delta d_X|S|.$ So the total number of outputs for Step 2' is at most
    \[{\delta d_X|S| \choose \le w/((d_X-\psi_X')\psi_Y')}. \qedhere\]

\end{proof}

\begin{proof}[Proof of \Cref{heart}]

Let $\gamma$ be a large enough positive constant chosen so that the following computations go through. Set
\beq{setup} \psi^X_0=d_X/\gamma \quad \text{and} \quad  \psi^Y_0=d_Y/\gamma, \enq
and let $\cW_0=\cW(F', \psi^X_0, \psi^Y_0)$ be the collection given by \Cref{lem.psi}. So
\beq{container1} \text{any $A \in \cG(a,g,F')$ has a $(\psi^X_0, \psi^Y_0)$-approximating pair in $\cW_0$.}
\enq
By \eqref{psi_bd} and the above choice of $\psi^X_0$ and $\psi^Y_0$, we have
\beq{W0.bd} |\cW_0|\leq {d_Yg \choose \le \delta \gamma w/((d_Y-\delta \varphi)d_X)}{\delta d_X d_Y g \choose \le 2\gamma w/(d_Xd_Y)}. \enq

We will iteratively run $(\psi^X_i, \psi^Y_i)$-approximations with the following parameters: given $\cW_i$, for the members $(F,S)$ of \textit{some} $\cU_i \sub \cW_i$ we will apply \Cref{lem.heart} with
\beq{psi.ini} \psi^Y_{i+1}=d_Y/\gamma \quad \text{and} \quad \psi^X_{i+1}=\psi^X_i/2=2^{-(i+1)}d_X/\gamma,\enq
until we obtain $(\psi_\kappa^X, \psi_\kappa^Y)$-approximation (see \eqref{kap} for the definition of $\kappa$). In particular, throughout our iterations, we have
\beq{i.bd} 2^i \le \sqrt {d_X} \quad \forall i\enq
and
\beq{psi.X} 1 \le \psi^X_i\leq d_X/\gamma, \ 1\leq \psi^Y_i \le d_Y/\gamma \quad \forall i\enq
(so each of the $(\psi^X_i, \psi^Y_i)$-approximations is well-defined for all $i$).

Observe that for any $i$ and any $(F,S)$, a $(\psi_i^X, \psi_i^Y)$-approximating pair for $(F,S)$ satisfies
\beq{s.t.bd} |S|-a \le 2t; \enq
indeed, by \Cref{SF_size}, \eqref{psi.X} and the fact that $d_Y\leq d_X$, we have 
\[s \le f+\frac{1}{d_X}\left[(g-f)\frac{d_Y}{\gamma}+(s-a)\frac{d_X}{\gamma}\right]\le f+(t-f+s)/\gamma,\]
which yields $s \le f+t/(\gamma-1)$. So $s-a \le f-a+t \le g-a+t=2t.$ In particular, this implies
\beq{s.2g} |S|\leq 2g. \enq

Now we define the collection $\cU_i$ for which the additional run will be performed. Let $\cU_i$ be the collection of all $(F,S)\in \cW_i$ with 
\beq{case3.F} |F| \ge g-t/2^i.\enq
Roughly speaking, $\cU_i$ consists of pairs $(F,S)$ for which an additional run of $\psi$-approximation is affordable, as is shown in the next lemma. 

\begin{claim}\label{iteration.cost}
    For each $i\geq 0$ and $(F,S) \in \cU_i$, there is a family $\cW_{i+1}(F,S) \sub 2^Y \times 2^X$ with
    \[|\cW_{i+1}(F,S)|\le \exp\left[O(t\gamma \log (gd_X^2/t)/d_X)\right]\]
    such that any $A \in \cG(a,g)$ for which $(F,S)$ is a $(\psi^X_i, \psi^Y_i)$-approximating pair has a $(\psi^X_{i+1}, \psi^Y_{i+1})$-approximating pair in $\cW_{i+1}(F,S).$
\end{claim}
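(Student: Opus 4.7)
The plan is to apply \Cref{lem.heart} directly to each $(F,S)\in\cU_i$ with the parameters $\psi_X':=\psi^X_{i+1}=d_X/(2^{i+1}\gamma)$ and $\psi_Y':=\psi^Y_{i+1}=d_Y/\gamma$; this immediately produces the family $\cW_{i+1}(F,S)$ with the required approximating property. The admissibility conditions $1\le\psi_X'\le d_X-1$ and $1\le\psi_Y'\le d_Y/\delta-1$ reduce to $2^{i+1}\le 2\sqrt{d_X}$ (which follows from \eqref{i.bd}) and to $\gamma$ being a sufficiently large constant relative to $\delta$. It therefore remains to verify that the right-hand side of \eqref{psi'_bd} simplifies to $\exp[O(t\gamma\log(gd_X^2/t)/d_X)]$ under these choices.

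The first binomial in \eqref{psi'_bd} is where the structure of $\cU_i$ comes in. By \eqref{case3.F} we have $g-|F|\le t/2^i$, and combined with $\psi_X'=d_X/(2^{i+1}\gamma)$ we get
\[\frac{g-|F|}{\psi_X'}\le \frac{t/2^i}{d_X/(2^{i+1}\gamma)}=\frac{2\gamma t}{d_X},\]
a bound that is crucially \emph{independent} of $i$. Together with $|S|\le 2g$ from \eqref{s.2g} and the binomial estimate \eqref{binom}, this factor is at most $\exp\{O((t\gamma/d_X)\log(gd_X/t))\}$.

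For the second binomial, we use $w=gd_Y-ad_X\le (g-a)d_Y=td_Y$ (since $d_Y\le d_X$), the bound $d_X-\psi_X'\ge d_X/2$ (which holds for $\gamma\ge 2$), and $\psi_Y'=d_Y/\gamma$, so that
\[\frac{w}{(d_X-\psi_X')\psi_Y'}\le \frac{td_Y}{(d_X/2)(d_Y/\gamma)}=\frac{2\gamma t}{d_X}.\]
Combined with $\delta d_X|S|\le 2\delta d_X g$ and \eqref{binom}, this factor is at most $\exp\{O((t\gamma/d_X)\log(\delta d_X^2g/t))\}$. Since $\delta$ is an absolute constant, the two contributions multiply to the claimed bound $\exp[O(t\gamma\log(gd_X^2/t)/d_X)]$. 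The only real content of the argument is the cancellation in the first display; without the defining inequality of $\cU_i$ we would have no control on $g-|F|$, and the iteration would collapse back to the cost of the initial $\psi$-approximation in \Cref{lem.psi}.
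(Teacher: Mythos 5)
Your proposal is correct and follows essentially the same route as the paper: apply \Cref{lem.heart} with $(\psi^X_{i+1},\psi^Y_{i+1})$, use the defining inequality \eqref{case3.F} of $\cU_i$ together with $\psi^X_{i+1}=d_X/(2^{i+1}\gamma)$ to get the $i$-independent bound $(g-|F|)/\psi^X_{i+1}\le 2\gamma t/d_X$ for the first binomial, and use $w\le td_Y$ with $|S|\le 2g$ for the second, exactly as in the paper's argument. The only cosmetic difference is that you verify the admissibility of the parameters inline, which the paper records separately in \eqref{psi.X}.
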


\begin{proof}
We apply \Cref{lem.heart} to produce $\cW_{i+1}(F,S)$, so $|\cW_{i+1}(F,S)|$ is bounded by \eqref{psi'_bd} with $\psi'_X=\psi^X_{i+1}$ and $\psi'_Y=\psi^Y_{i+1}$:  the logarithm of the first term on the right-hand side of \eqref{psi'_bd} is then, using \eqref{binom} and \eqref{s.2g}, at most
    \[\log{2g \choose \le (g-f)/\psi^X_{i+1}}=O\left(\frac{g-f}{\psi^X_{i+1}}\log \left(\frac{g\psi^X_{i+1}}{g-f}\right)\right)\stackrel{(\ddagger)}{=}O\left(\frac{t\gamma}{d_X}\log \left(\frac{gd_X}{t}\right)\right)\]
    where $(\ddagger)$ uses the fact that 
    \beq{1overx} \text{the function $x\log(1/x)$ is monotone increasing on $(0, 1/e)$}\enq
    and 
    \[\frac{g-f}{g\psi^X_{i+1}} \stackrel{\eqref{psi.ini},\eqref{case3.F}}{\le} t/2^i\cdot 2^{i+1}\gamma/(gd_X) \stackrel{\eqref{psi.ini}}{\le} 2t\gamma/(gd_X)~(\leq 1/e).\]
Similarly, the logarithm of the second term on the right-hand side of \eqref{psi'_bd} is at most
    \[\log{2\delta d_X g \choose \le w/((d_X-\psi^X_{i+1})\psi^Y_i)}=O\left(\frac{w}{(d_X-\psi^X_{i+1})\psi^Y_i}\log \left(\frac{d_X g (d_X-\psi^X_{i+1})\psi^Y_i}{w}\right)\right)\stackrel{(*)}{=}O\left(\frac{t\gamma}{d_X}\log\left(\frac{gd_X^2}{t}\right) \right)\]
where $(*)$ follows from
\[\frac{w}{d_Xg(d_X-\psi_{i+1}^X)\psi_i^Y} \stackrel{\eqref{psi.ini}, \eqref{w.td}}{\le} \frac{td_Y\gamma}{\frac{1}{2}gd_X^2d_Y}=\frac{2t\gamma}{gd_X^2}~(<1/e),\]
recalling that 
\beq{w.td} w~(=gd_Y-ad_X)\leq td_Y. \qedhere \enq
\end{proof}

Now, set $\cW_{i+1}=\bigcup_{(F,S) \in \cU_i}\cW_{i+1}(F,S)$ and repeat the process until we obtain $\cW_{\kappa}$. Then set
\begin{align}\label{eq:FiDef}
\cF_i:=\begin{cases} \cW_i \setminus \cU_i & \text{ for $i \in [0, \kappa-1]$;}\\
\cW_\kappa & \text{ for $i=\kappa$},\end{cases}
\end{align}
and let $\mathcal{F}=\bigcup_{i \in [0, \kappa]} \cF_i$. Crucially, by construction, $\cF_i$ satisfies items (2) and (3) in \Cref{heart}. It remains to show the bound on $|\cF_i|$ in item (1).
By \Cref{iteration.cost}, for each $i\geq 0$,
    \beq{total.containers}|\cW_i|\leq |\cW_0|\exp\left[O\left(\frac{it\gamma}{d_X} \log \left(\frac{gd_X^2}{t}\right)\right)\right].\enq
Now, the conclusion follows by recalling the upper bound \eqref{W0.bd} on $|\cW_0|$.\end{proof}

\section{Proof of \Cref{ML}}\label{mlproofsec}

Our goal is to bound $\sum_{A \in \cG(a,g)} \gl^{|A|}$ by the right-hand side of \eqref{target} using \Cref{heart}. Recall the definition of $\kappa$ in \eqref{kap}. For notational simplicity, we write $A\sim_{i, \gamma}(F, S)$ if $(F,S)\in \cF_i$
is a $(d_X/(2^i\gamma),d_Y/\gamma)$-approximating pair for $A$.
By \Cref{heart}, we have
\beq{summ}\sum_{A \in \cG(a,g)} \gl^{|A|} \le \sum_{F' \in \cV(a,g,\varphi)} \sum_{i=0}^\kappa \sum_{(F,S) \in \cF_i} \sum_{A \sim_{i, \gamma} (F,S)}\gl^{|A|}.\enq
The key part of this section is to give a uniform upper bound on $\sum_{A \sim_{i, \gamma}(F,S)} \gl^{|A|}$ that is valid for all $i$ and $(F,S)$. Once we do that, we can bound the right-hand side of~\eqref{summ} by combining the bounds in \Cref{lem.varphi} and \Cref{heart}.

Recall the assumptions
\beq{t.ass} t=g-a\ge \gd' g/d_Y \text{ for a given constant $\gd'>0$},\enq
and
\beq{gl.ass} \gl>C\log^2 d_X/\sqrt{d_X}, \enq
where we get to choose $C$ given $(\gd, \gd', \gd'')$.

For convenience, we also define
\[ \bar\lam = \min\{\lam, 1\}.\]
We then have $1+\lam\geq e^{\bar\lam/2}.$

\subsection{Bounding $\sum_{A \sim_{i, \gamma}(F,S)}\gl^{|A|}$}\label{reconstruction} Suppose $i \in [0, \kappa]$ and $(F,S) \in \cF_i$ are given. Our strategy here is similar to \cite{galvin_threshold}: we apply different reconstruction strategies depending on the size of $F$ relative to $g$. First, if $i \in [0, \kappa-1]$, then we take advantage of the fact that
\beq{2.i.bd} \left(t/\sqrt {d_X} \stackrel{\eqref{i.bd}}{\le}\right)~2^{-i}t<g-f\, , \enq
where the second inequality follows from~\eqref{case3.F} and~\eqref{eq:FiDef} (the definition of $\mathcal{F}_i$). 

\begin{lem}\label{small.F}
    For each $i \in [0, \kappa-1]$, we have
    \[\sum_{A \sim_{i, \gamma}(F,S)} \gl^{|A|} \le (1+\gl)^ge^{-\bar\gl t/(3\sqrt {d_X})}.\]
\end{lem}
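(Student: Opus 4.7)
}
The plan is to exploit the large excess $g-f$ available in this regime. Since $(F,S)\in\cF_i=\cW_i\setminus\cU_i$ with $i\le\kappa-1$, condition \eqref{case3.F} must fail, so $g-f>t/2^i\ge 2t/\sqrt{d_X}$ (the last inequality uses $2^{\kappa-1}\le\sqrt{d_X}/2$, cf.\ \eqref{kap}). Thus each $A\sim_{i,\gamma}(F,S)$ has at least $2t/\sqrt{d_X}$ vertices in the surplus set $D:=N(A)\setminus F$, and each $v\in D\subseteq Y\setminus F$ satisfies $d_S(v)\le\psi_Y=d_Y/\gamma$ by \eqref{F.degree}.

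The first step is to use $|N(A)|=g$ to factor out $(1+\lambda)^g$:
\[
\sum_{A\sim_{i,\gamma}(F,S)} \lambda^{|A|}
= (1+\lambda)^g \sum_{A\sim_{i,\gamma}(F,S)}\frac{\lambda^{|A|}}{(1+\lambda)^{|N(A)|}},
\]
reducing the claim to showing the polymer-weight sum on the right is at most $e^{-\bar\lambda t/(3\sqrt{d_X})}$.

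To bound the polymer sum, I would partition $A$'s by the surplus $D$ (a $(g-f)$-subset of $Y\setminus F$) and, for each fixed $D$, estimate $\sum_{A:N(A)=F\cup D,\, A\subseteq S}\lambda^{|A|}$ via a hard-core partition-function computation on the bipartite subgraph induced by $T_D:=\{u\in S:N(u)\subseteq F\cup D\}$ and $F\cup D$. The key savings comes from the fact that each $v\in D$ has at most $\psi_Y=d_Y/\gamma$ neighbors in $S$ (hence in $T_D$), so forcing $v\in N(A)$ is ``costly'' relative to the polymer weight $(1+\lambda)^{-1}$ it contributes. Summing over admissible $D$ and converting the multiplicative savings to exponential form via the inequality $1+\lambda\ge e^{\bar\lambda/2}$ should yield an estimate of the form $e^{-\Omega(\bar\lambda(g-f))}$, which is upgraded to the target $e^{-\bar\lambda t/(3\sqrt{d_X})}$ using $g-f\ge 2t/\sqrt{d_X}$.

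The main obstacle is balancing the entropy cost of enumerating the choices of $D$ (roughly $\binom{|N(S)\setminus F|}{g-f}$ many) against the per-vertex savings from $d_S(v)\le\psi_Y$ for $v\in D$. This is precisely where the hypothesis $\lambda\ge C\log^2 d_X/\sqrt{d_X}$ becomes essential: at this scale the savings contributed by each of the $g-f$ surplus vertices outweighs the enumeration cost per element of $D$, so that the net bound comes out with the correct constant $1/(3\sqrt{d_X})$ in the exponent. This mirrors the trade-off between $\psi$-fineness and reconstruction cost highlighted in the sketch of \Cref{sec.sketch}.
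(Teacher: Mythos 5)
Your proposal has a genuine gap at its central quantitative step: the claimed trade-off between the entropy of enumerating the surplus set $D=N(A)\setminus F$ and the "savings" from the degree condition \eqref{F.degree} does not come out in your favour, and the hypothesis $\lambda\geq C\log^2 d_X/\sqrt{d_X}$ cannot rescue it. Concretely, $D$ ranges over subsets of $N(S)\setminus F$, a set of size up to $\delta d_X|S|\approx d_Xg$, so the enumeration costs at least $\log\bigl(e|N(S)\setminus F|/(g-f)\bigr)=\Omega(\log d_X)$ nats per element of $D$. On the other hand, the best saving your mechanism can produce is roughly $\tfrac{1}{2}\bar\lambda$ per element: for fixed $D$, the edge count $|T_D|d_X\leq fd_Y+(g-f)\psi_Y$ gives $|T_D|\leq f+(g-f)/\gamma$, hence a factor $(1+\lambda)^{-0.9(g-f)}\leq e^{-0.45\bar\lambda(g-f)}$; and the alternative saving from forcing $N(A)\supseteq D$ is $\min\{1,\lambda d_Y/\gamma\}$ per vertex, which is $1$ in the relevant regime ($\lambda d_Y\gg\gamma$). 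Since $\bar\lambda\leq 1$ (indeed $\lambda=o(1)$ in the range of interest), the saving per surplus vertex is swamped by the $\Omega(\log d_X)$ enumeration cost, so the net exponent has the wrong sign. The lower bound on $\lambda$ is of no help inside this lemma: in the paper it is only invoked at the summing-up stage (Section~\ref{summing}) to make the gain $\bar\lambda t/(3\sqrt{d_X})$ dominate container-counting costs of order $t\log^2 d_X/d_X$; no comparable cost can be afforded within \Cref{small.F} itself.

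The fix is to drop the decomposition over $D$ entirely, which is what the paper does. Since every $A$ with $A\sim_{i,\gamma}(F,S)$ satisfies $A\subseteq[A]\subseteq S$, one has $\sum_{A}\lambda^{|A|}\leq(1+\lambda)^{|S|}$, and the whole point is to show $|S|$ is noticeably smaller than $g$. This is exactly \Cref{SF_size} (which uses \emph{both} degree conditions \eqref{S.degree} and \eqref{F.degree}, the former of which you never invoke) with $\psi_X=d_X/(2^i\gamma)$, $\psi_Y=d_Y/\gamma$: combined with $s-a\leq 2t$ from \eqref{s.t.bd} and the defining property $g-f>t/2^i\geq t/\sqrt{d_X}$ of $\cF_i$ for $i\leq\kappa-1$, it yields $g-|S|\geq 0.9(g-f)$, whence $(1+\lambda)^{|S|}\leq(1+\lambda)^g e^{-0.45\bar\lambda t/\sqrt{d_X}}\leq(1+\lambda)^g e^{-\bar\lambda t/(3\sqrt{d_X})}$ via $1+\lambda\geq e^{\bar\lambda/2}$. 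Your edge-counting observation for $T_D$ is essentially this argument applied to the wrong set; applied to $S$ itself it closes the proof with no enumeration and no appeal to the size of $\lambda$.
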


\begin{proof}
For the given $(F,S)$, we specify $A$ as a ($\gl$-weighted) subset of $S$, so $\sum_{A \sim_i(F,S)} \gl^{|A|} \le (1+\gl)^{|S|}$. We show that the right-hand side, $(1+\gl)^{|S|}$, is (significantly) smaller than our benchmark $(1+\gl)^g$.

Let $f=|F|$ and $s=|S|.$ By \Cref{SF_size}, \eqref{psi.ini}, and the fact that $d_Y\leq d_X$,
\[s \le f+\frac{1}{d_X}[(g-f)d_X/\qr+(s-a)2^{-i}d_X/\qr],\]
and so
\beq{same}\begin{split} g-s &\ge g-(f+(g-f)/\qr+(s-a)2^{-i}/\qr)\\
& \stackrel{\eqref{2.i.bd}}{>} (g-f)(1-1/\qr)-(s-a)(g-f)/(t\qr)\stackrel{\eqref{s.t.bd}}{\ge}(g-f)(1-1/\qr-4/\qr)\ge 0.9(g-f).\end{split}\enq
Therefore,
\beq{finalsum}\sum_{A \sim_i (F, S)} \gl^{|A|} \le (1+\gl)^{|S|} \le (1+\gl)^g(1+\gl)^{-(g-s)} \le (1+\gl)^g(1+\gl)^{-0.9(g-f)}\stackrel{\eqref{2.i.bd}}{\le }(1+\gl)^ge^{-\bar\gl t/(3\sqrt {d_X})}.\enq
\end{proof}

Notice that the proof of \Cref{small.F} crucially relies on the fact that $g-s$ is ``fairly large," which is not necessarily true for $i=\kappa$. The lemma below handles the case $i=\kappa$.

\begin{lem}\label{lem_large F} For each $(F, S) \in \mathcal{F}_\kappa$, we have
\[\sum_{A \sim_{\kappa, \gamma} (F, S)} \gl^{|A|} \le (1+\gl)^ge^{-\bar\gl t/(3\sqrt{d_X})}.\]
\end{lem}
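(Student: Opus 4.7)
The plan is to split into two subcases based on the size of $g - |F|$ relative to $t/\sqrt{d_X}$.

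\textbf{Subcase 1 ($g - |F| \ge t/\sqrt{d_X}$).} Here the argument of \Cref{small.F} carries over essentially verbatim. Since $(F,S) \in \cF_\kappa$ corresponds to $\psi^X_\kappa = d_X/(2^\kappa \gamma) \le 2\sqrt{d_X}/\gamma$ (using $2^\kappa > \sqrt{d_X}/2$) and $\psi^Y_\kappa = d_Y/\gamma$, \Cref{SF_size} combined with $|S|-a \le 2t$ from \eqref{s.t.bd} gives $|S| - |F| \le (g-|F|)/\gamma + 2(s-a)/(\gamma\sqrt{d_X})$, and choosing $\gamma$ sufficiently large yields $g - |S| \ge 0.9(g-|F|) \ge 0.9\, t/\sqrt{d_X}$. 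Using $\log(1+\lambda) \ge \bar\lambda/2$ then gives
\[
\sum_{A \sim_{\kappa,\gamma}(F,S)} \lambda^{|A|} \le (1+\lambda)^{|S|} \le (1+\lambda)^g e^{-\bar\lambda(g-|S|)/2} \le (1+\lambda)^g e^{-\bar\lambda t/(3\sqrt{d_X})}.
\]

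\textbf{Subcase 2 ($g - |F| < t/\sqrt{d_X}$).} Now $(1+\lambda)^{|S|}$ may exceed $(1+\lambda)^g$, so we extract additional structure by encoding each $A$ via the ``correction'' $R(A) := N(A) \setminus F$, which has size $g - |F|$ and is contained in $N(S) \setminus F$. Once $R$ is specified, $[A] = \{x \in X : N(x) \subseteq F \cup R\}$ is determined and is required to have size $a$. Therefore
\[
\sum_{A \sim_{\kappa,\gamma}(F,S)} \lambda^{|A|} \le \sum_R \sum_{A \subseteq [F \cup R]_X} \lambda^{|A|} \le \#\{R\} \cdot (1+\lambda)^a,
\]
and by $\log(1+\lambda) \ge \bar\lambda/2$ the target reduces to showing $\log \#\{R\} \le \bar\lambda t(1/2 - 1/(3\sqrt{d_X}))$.

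To bound $\#\{R\}$, I would use that $R$ is a $(g-|F|)$-subset of $N(S) \setminus F$, whose size is controlled by $|S|\psi^X_\kappa \le O(g\sqrt{d_X}/\gamma)$ via \eqref{S.degree}. Combining $|R| < t/\sqrt{d_X}$ (the subcase hypothesis), $g \le td_Y/\delta' \le td_X/\delta'$ (from $t \ge \delta' g/d_Y$), and $\binom{n}{k} \le (en/k)^k$ produces an estimate of the form $\log \#\{R\} = O(t\log d_X/\sqrt{d_X})$. For $\lambda \ge C\log^2 d_X/\sqrt{d_X}$ with $C$ sufficiently large, this is dominated by $\bar\lambda t/3$, closing the argument.

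The \emph{main obstacle} is Subcase 2, specifically the careful tuning of the bound on $\#\{R\}$: the interplay between the iteration-induced smallness of $g - |F|$ (a consequence of $(F,S)$ surviving $\kappa$ rounds of refinement, which forces $g - |F| \le t/2^{\kappa-1} \le 4t/\sqrt{d_X}$ throughout $\cF_\kappa$) and the lower bound $\lambda \ge C\log^2 d_X/\sqrt{d_X}$ is precisely what dictates the threshold $\lambda = \tilde\Omega(d_X^{-1/2})$ appearing in \Cref{ML}. Any improvement of the counting estimate here would translate directly into a weaker lower bound on $\lambda$, as conjectured in \Cref{conj}.
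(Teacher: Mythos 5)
Your proof is correct and follows essentially the same route as the paper's: the same case split on whether $g-|F|$ exceeds roughly $t/\sqrt{d_X}$, with the large-deficiency case handled by the $(1+\lambda)^{|S|}$ bound and $g-|S|\geq 0.9(g-|F|)$ as in \Cref{small.F}, and the small-deficiency case handled by paying for $N(A)\setminus F$ and recovering $[A]$, then gaining the factor $(1+\lambda)^{-t}$. The only (harmless) difference is that you bound the ground set for the correction $R$ by $|S|\psi^X_\kappa=O(g\sqrt{d_X})$ via \eqref{S.degree}, whereas the paper uses the cruder bound $|N(S)|\leq \delta d_X^2 g$ from \eqref{s_loose}; both give $\log\#\{R\}=O(t\log d_X/\sqrt{d_X})$, which is all that is needed.
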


\begin{proof} We consider two cases. If $g-f>2^{-\kappa}t~\left(\stackrel{\eqref{i.bd}}{\ge} t/\sqrt {d_X} \right)$, then we repeat the proof of \Cref{small.F}: again by \Cref{SF_size}, \eqref{psi.ini}, and the fact that $d_Y\leq d_X$,
\[s \le f+\frac{1}{d_X}[(g-f)d_X/\qr+(s-a)2^{-\kappa}d_X/\qr],\]
so by repeating the computation in \eqref{same}, we have $g-s \ge 0.9(g-f)$, and thus conclude (as in \eqref{finalsum}) that
\[\sum_{A \sim_\kappa(F,S)} \gl^{|A|} \le (1+\gl)^ge^{-\bar\gl t/(3\sqrt{d_X})}.\]

For the second case, if $g-f \le 2^{-\kappa} t \stackrel{\eqref{kap}}{<}\frac{2t}{\sqrt{d_X}}$, then we first specify $G \setminus F$ as a subset of $N(S) \setminus F$. Note that 
\[|N(S) \setminus F| \le |N(S)| \le \delta d_X|S| \stackrel{\eqref{s_loose}}{\le} \delta(d_X)^2g,\]
so the number of choices for $G \setminus F$ is at most $\displaystyle {\delta (d_X)^2g \choose \le 2t/\sqrt {d_X}}.$  Since $F \sub G$, the specification of $G \setminus F$ identifies $G$, and thus $[A]$. Then we specify $A$ as a ($\lambda$-weighted) subset of $[A]$. Therefore,
\[\sum_{A \sim_\kappa (F, S)} \gl^{|A|} \le {\delta(d_X)^2 g \choose \le 2t/\sqrt {d_X}} (1+\gl)^a =(1+\gl)^g(1+\gl)^{-t}{\delta(d_X)^2 g \choose \le 2t/\sqrt {d_X}}\stackrel{(\dagger)}{\leq}(1+\gl)^ge^{-\bar\gl t/3}<(1+\gl)^g e^{-\bar\gl t/(3\sqrt{d_X})},\]
where $(\dagger)$ uses \eqref{binom} and \eqref{t.ass} and requires for $C$ to be sufficiently large as a function of $\delta$ and $\delta'$.
\end{proof}

\subsection{Bounding the total number of containers} We first find an upper bound on $|\mathcal{V}|$ using \eqref{varphi_bd}. Set $\varphi=d_Y/(2\delta)$ and $\gamma' =\max\{1,10/\gd''\},$ noting that such $\gamma'$ satisfies the assumption of \Cref{lem.varphi} for large enough $d_X$ and $d_Y.$ In what follows, $O_{{\bar\gd}}(\cdot)$ means the implicit constant depends (only) on $\bar \gd=(\gd, \gd', \gd'')$.

We bound each term in \eqref{varphi_bd}:
\[\frac{54g\gamma'\log d_X \log (\delta d_Xd_Y)}{\varphi d_X}=O_{\bar \gd}\left(\frac{g\log^2 d_X}{d_Xd_Y}\right).\]
\[\frac{54g\log (\delta d_Xd_Y)}{(d_X)^{\gamma'm_\varphi/(\varphi d_X)}}=O_{\bar \gd}\left(\frac{g\log d_X}{(d_X)^{10}}\right).\]
\[\frac{54w\log d_Y \log (\delta d_Xd_Y)}{d_X(d_Y/\gd-\varphi)}\stackrel{\eqref{w.td}}{=}O\left(\frac{\gd td_Y\log^2 d_X}{d_Xd_Y}\right)=O_{\bar\gd}\left(\frac{t\log^2 d_X}{d_X}\right).\]
\[\log {\frac{3g\gamma'd_Y\log d_X}{\varphi d_X}\choose \le \frac{3w\gamma'\log d_X}{\varphi d_X}}\stackrel{\eqref{binom}}{=}O\left(\frac{\gd w\gamma' \log d_X}{d_Yd_X}\cdot \log\left(\frac{gd_Y}{w}\right)\right)\stackrel{\eqref{1overx},\eqref{w.td}, \eqref{t.ass}}{=}O_{\bar\gd}\left(\frac{t\log^2  d_X}{d_X}\right).\]
(Here and below, the asymptotic terms on the right-hand sides are bounded away from 0 by the assumption $t \ge c(\gd, \gd', \gd'')\cdot d_X/(\log d_X)^2$.) Therefore,
\beq{v.total} |\cV|\leq|Y| \exp\left[O_{\bar \gd} \left(\frac{g\log^2d_X}{d_Xd_Y}+\frac{t\log^2 d_X}{d_X}\right)\right].\enq

Next, we give a simpler upper bound on $\sum_{i \in [0, \kappa]}|\cF_i|$ by bounding each term in \eqref{heart.bd}:
\[\log{d_Yg \choose \le \delta \gamma w/((d_Y-\delta \varphi)d_X)}\stackrel{\eqref{binom}}{=}O\left(\frac{\gd w\gamma}{d_Xd_Y}\log\left(\frac{d_Y^2d_Xg}{\gamma\delta w}\right)\right)\stackrel{\eqref{1overx},\eqref{t.ass}}{=}O_{\bar\gd}\left(\frac{\gamma t \log d_X}{d_X}\right).\]
\[\log{\delta d_X d_Y g \choose \le 2\gamma w/(d_Xd_Y)}\stackrel{\eqref{binom}}{=}O_{\bar\gd}\left(\frac{w\gamma}{d_Xd_Y}\log\left(\frac{gd_X}{w\gamma}\right)\right)\stackrel{\eqref{1overx},\eqref{w.td}}{=}O_{\bar\gd}\left(\frac{\gamma t}{d_X}\log\left(\frac{d_X}{t\gamma}\right)\right)=O_{\bar\gd}\left(\frac{\gamma t\log d_X}{d_X}\right).\]
\[\frac{t\log d_X \log(gd_X^2/t)}{d_X}\stackrel{\eqref{1overx},\eqref{t.ass}}{=}O_{\bar\gd}\left(\frac{t\log^2 d_X}{d_X}\right).\]
Therefore, for any $F'\in \cV$,
    \beq{w.cost} \sum_{i \in [0, \kappa]}|\cF_i(a,g,F')|\le \exp\left[O_{\bar \gd}\left(\frac{\gamma t\log d_X}{d_X}\right)+O_{\bar\gd}\left(\frac{t\log^2 d_X}{d_X}\right)\right].\enq

\subsection{Summing up}\label{summing}
By putting together \eqref{v.total}, \eqref{w.cost}, \Cref{small.F}, and \Cref{lem_large F}, we upper bound the right-hand side of \eqref{summ} by the product of $|Y|(1+\gl)^ge^{-\bar\gl t/(3\sqrt{d_X})}$ and
\beq{above}\exp\left[O_{\bar \gd} \left(\frac{g\log^2d_X}{d_Xd_Y}+\frac{t\log^2 d_X}{d_X}\right)\right]\cdot \exp\left[O_{\bar \gd}\left(\frac{\gamma t\log d_X}{d_X}\right)+O_{\bar\gd}\left(\frac{t\log^2 d_X}{d_X}\right)\right].\enq
Set $C=C(\gd,\gd',\gd'')$ large enough to dominate all the implicit constants that depend on $(\gd, \gd', \gd'')$ in the above expression and the (absolute) constant $\gamma$. With this choice of $C$ and using \eqref{t.ass}, we can make \eqref{above} at most
\[\exp\left[\frac{1}{6}\cdot \frac{Ct\log^2 d_X}{d_X}\right]\stackrel{\eqref{gl.ass}}{<}\exp\left[\frac{1}{6}\cdot\frac{\bar\gl t}{\sqrt{d_X}}\right].\]
This yields the conclusion of \Cref{ML}.

\section{Proof of Theorem \ref{thm:main}}\label{pfthmmain}

The key idea to proving \Cref{thm:main} is to replace the use of Galvin's original container lemma in \cite{hypercube} by our \Cref{ML}. The rest of the proof proceeds as in \cite{hypercube}, so we introduce necessary concepts here at a rapid pace, and refer the reader to the above paper for a more detailed exposition.

Recall that we write $\cE$ and $\cO$ for the even and odd sides of $Q_d$, respectively. For an independent set $I\in\cI(Q_d)$, we say $\cE$ is the \emph{minority side} of $I$ if $|\cE\cap I|<|\cO\cap I|$ and the \emph{majority side} otherwise. If $\cE$ is the minority side of $I$, then $\cO$ is the majority side and vice versa.

For $\cD\in\{ \cE, \cO\}$ we will define a measure $\nu_\cD$ that will approximate the distribution of vertices on the minority side of a sample from $\mu_{Q_d,\lam}$. First, let $\cP=\cP_\cD$ be the family of all subsets $A$ of $\cD$ which are $2$-linked and satisfy $|[A]|\leq 2^{d-2}$. We call elements of $\cP$ \emph{polymers}. Two polymers $A_1, A_2$ are called \emph{compatible} if $A_1\cup A_2$ is not $2$-linked; we write this as $A_1\sim A_2$. Let $\Omega=\Omega_\cD$ be the collection of all sets of pairwise compatible polymers. For each polymer $A$, define the \emph{weight} of $A$ as
\[ w(A)\coloneqq \frac{\lam^{|A|}}{(1+\lam)^{|N(A)|}}.\]

Set
\[\Xi=\sum_{\Lambda\in\Omega} \prod_{A\in \Lambda} w(A)\]
and finally define $\nu_\cD$ as the probability measure on $\Omega$ given by
\[ \nu_\cD (\Lambda)=\frac{\prod_{A\in\Lambda} w(A)}{\Xi}.\]

\nin The tuple $(\cP, w, \sim)$ is an example of a \emph{polymer model} and $\Xi$ is the resulting partition function.

Now define a random independent set $\mathbf{ I}\in\cI(Q_d)$ as follows:
\begin{enumerate}
    \item choose $\cD\in\{\cE,\cO\}$ uniformly at random;\label{Step1}
    \item sample a configuration $\Lambda\in\Omega_\cD$ according to $\nu_\cD$ and include the set $\bar\Lambda:=\bigcup_{A\in\Lambda}A$ in $\mathbf{I}$;\label{Step2}
    \item for each vertex $v$ on the non-defect side that is not adjacent to  vertex in $\bar\Lambda$, include $v$ in $\mathbf{I}$ independently with probability $\frac{\lam}{1+\lam}$.\label{Step3}
\end{enumerate}
Let $\hat{\mu}_\lam$ denote the law of the random independent set $\mathbf{I}\in\cI(Q_d)$. We refer to the side $\cD\in\{\cE,\cO\}$ selected at Step 1 as the \emph{defect side} of $\mathbf{I}$. We will see (\Cref{lem17} below) that with high probability the defect side of $\mathbf{I}$ is the minority side. 

We borrow one more concept from the theory of  polymer models. Once again fix $\cD\in\{\cE,\cO\}$. A \emph{cluster} is an ordered tuple $\Gamma$ of polymers in $\cP_\cD$ such that the incompatibility graph $H(\Gamma)$ (defined on the vertex set $\Gamma$, with $\{A_1,A_2\}$ an edge if and only if $A_1\not\sim A_2$) is connected. The \emph{size} of a cluster $\Gamma$ is $\|\Gamma\|\coloneqq \sum_{A\in\Gamma} |A|$. We write $\cC=\cC_\cD$ for the set of all clusters, and $\cC_k$ for the set of all clusters of size $k$ ($k\geq 1$).

The \emph{Ursell function} of a graph $G=(V,E)$ is
\[ \phi(G)\coloneqq \frac{1}{|V|!} \sum_{\substack{A\subseteq E \\ (V,A)\text{ connected}}} (-1)^{|A|}.  \]
For a cluster $\Gamma\in\cC$, define
\[ w(\Gamma)=\phi(H(\Gamma))\prod_{S\in\Gamma} w(S)\]
and for $k\geq 1$  set
\[ L_k\coloneqq \sum_{\Gamma\in\cC_k} w(\Gamma)\]
and
\[ T_k\coloneqq \sum_{i=1}^{k-1} L_i.\]

The \emph{cluster expansion} is the formal power series
\[ \log \Xi = \sum_{\Gamma\in \cC} w(\Gamma).\]

\nin We will make use of the following criterion for convergence of the cluster expansion.
\begin{thm}[Koteck\'y and Preiss {\cite{kp}}]
    Let $f, g:\cP\rightarrow[0, \infty)$ be two functions. If, for all polymers $A\in\cP$, we have
    \beq{kpcond} \sum_{A'\not\sim A} w(A')e^{f(A')+g(A')} \leq f(A)\enq
    then the cluster expansion converges absolutely. Moreover, writing $\Gamma\not\sim A$ if there exists some $A'\in\Gamma$ with $A'\not\sim A$ and setting $g(\Gamma)=\sum_{A\in\Gamma} g(A)$, for all polymers $A$ we have
    \beq{kpbd} \sum_{\Gamma\not\sim A} |w(\Gamma)|e^{g(\Gamma)}\leq f(A). \enq
\end{thm}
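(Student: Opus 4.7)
The plan is to establish the \emph{moreover} statement \eqref{kpbd} directly, since absolute convergence of $\sum_{\Gamma \in \cC} w(\Gamma)$ is an immediate consequence. The strategy is the classical one: combine the tree-graph (Penrose) inequality with an induction on cluster size to reduce the estimate to two applications of the hypothesis \eqref{kpcond}.

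First, I would invoke the tree-graph bound for the Ursell function: for any connected graph $H$,
\[|V(H)|! \cdot |\phi(H)| \leq |\{\text{spanning trees of } H\}|.\]
Applied to $H = H(\Gamma)$ for a cluster $\Gamma = (A_1, \ldots, A_n)$, this bounds $|w(\Gamma)|$ by $\tfrac{1}{n!} \sum_T \prod_i w(A_i)$, where $T$ ranges over spanning trees of the incompatibility graph. The factor $1/n!$ is what will absorb the cluster ordering when we re-parametrize.

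Next, I would re-parametrize $\sum_{\Gamma \not\sim A} |w(\Gamma)| e^{g(\Gamma)}$ as a sum over rooted trees of polymers (vertices labeled by polymers, edges corresponding to incompatibilities), rooted at some polymer $A_0 \not\sim A$. The central inductive claim is
\[\Phi(A_0) := \sum_{\substack{\Gamma \\ A_0 \text{ is the root}}} |w(\Gamma)| e^{g(\Gamma)} \leq w(A_0) e^{f(A_0) + g(A_0)} \quad \text{for every } A_0 \in \cP.\]
Proceed by induction on tree size. Splitting a rooted tree at $A_0$ into an unordered collection of sub-trees, each rooted at some $A' \not\sim A_0$, the sum over any number of such children exponentiates, giving
\[\Phi(A_0) \leq w(A_0) e^{g(A_0)} \exp\!\left(\sum_{A' \not\sim A_0} \Phi(A')\right) \leq w(A_0) e^{g(A_0)} \exp\!\left(\sum_{A' \not\sim A_0} w(A') e^{f(A') + g(A')}\right),\]
where the final factor is at most $e^{f(A_0)}$ by \eqref{kpcond}. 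Summing $\Phi(A_0)$ over $A_0 \not\sim A$ and applying \eqref{kpcond} once more yields \eqref{kpbd}.

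The main obstacle is the combinatorial bookkeeping that links the Penrose tree-graph inequality to the rooted-tree parametrization; in particular, one must verify that the symmetry factors coming from unordered children of a tree vertex, together with the $1/n!$ from the Ursell bound and the ordering of clusters, cancel so that each cluster is accounted for exactly once in the rooted-tree sum (with the exponential structure above). Once this bookkeeping is set up, the induction itself is transparent and the conclusion follows cleanly from two applications of the hypothesis.
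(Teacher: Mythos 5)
This statement is the Koteck\'y--Preiss convergence criterion, which the paper does not prove at all: it is quoted verbatim from the literature (\cite{kp}) and used as a black box, so there is no internal proof to compare your argument against. Judged on its own terms, your sketch is the standard modern route to this theorem (tree-graph/Penrose bound on the Ursell function, re-parametrization of the pinned cluster sum as a sum over rooted decorated trees, and an induction whose step uses \eqref{kpcond} to close the exponential), and it is correct in outline; note that the original Koteck\'y--Preiss argument is a different induction on pinned partition functions that never passes through spanning trees, so yours is the now-common textbook proof rather than a reconstruction of theirs. Two points you flag but do not carry out deserve emphasis: the induction must be set up on a truncated quantity (rooted trees of size or depth at most $N$, with the bound uniform in $N$) so that it is well-founded before passing to the limit, and the symmetry-factor bookkeeping matching the $1/n!$ from the Ursell function, the ordering of clusters, the choice of spanning tree, and the unordered children at each tree vertex is exactly where the work lies --- together with the harmless overcounting incurred when rooting a cluster at \emph{some} polymer incompatible with $A$, since a cluster may contain several. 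Also, your remark that absolute convergence is ``an immediate consequence'' of \eqref{kpbd} is fine in the paper's setting (every polymer is incompatible with itself and the polymer family here is finite, which is how the paper uses the bound in \eqref{kpcube2}), but in a general infinite polymer system the convergence statement is really the uniform pinned bound \eqref{kpbd} itself, so this should be said rather than waved at.
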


We will also require the following result about isoperimetry in the hypercube.

\begin{lem}[see e.g. \cite{galvin.hom,sapokorcube}]

    Let $A\subseteq \cE$ (or $A\subseteq \cO$). Then:
    \begin{enumerate}
        \item if $|A|\leq d/10$, then $|N(A)|\geq d|A|-|A|^2$;
        \item if $|A|\leq d^4$, then $|N(A)|\geq d|A|/10$;
        \item if $|A|\leq 2^{d-2}$, then $|N(A)|\geq \left( 1+\frac{1}{2\sqrt{d}}\right)|A|$.
    \end{enumerate}
\end{lem}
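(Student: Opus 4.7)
The plan is to handle the three parts in order of increasing delicacy, with tools matched to the scale of $|A|$.

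For part (1) I would use straightforward Bonferroni. Because $A \subseteq \cE$, any two distinct $u,v\in A$ lie at even Hamming distance, hence at distance at least $2$; one checks that $|N(u)\cap N(v)|\leq 2$, with equality iff $u$ and $v$ are at distance exactly $2$ (the intersection then being the two vertices on a shortest $u$--$v$ path). Consequently
\[ |N(A)| \;\geq\; \sum_{v\in A}|N(v)| \;-\; \sum_{\{u,v\}\subseteq A}|N(u)\cap N(v)| \;\geq\; d|A|-2\binom{|A|}{2} \;\geq\; d|A|-|A|^2. \]

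For part (2) the inclusion-exclusion bound degrades once $|A|\gtrsim d$, so I would switch to Cauchy--Schwarz. The identities $d|A|=\sum_{w\in N(A)} d_A(w)$ and (again by $|N(u)\cap N(v)|\leq 2$) $\sum_w d_A(w)^2 \leq d|A|+2|A|(|A|-1)$ combine via Cauchy--Schwarz to yield $|N(A)|\geq d^2|A|/(d+2|A|)$, which already handles $|A|=O(d)$ with room to spare. For the wider range $d\lesssim |A|\leq d^4$ I would appeal to a Harper/Kruskal--Katona-style vertex-isoperimetric inequality restricted to $\cE$: among $A\subseteq \cE$ of a prescribed size the neighborhood is minimized on an initial Hamming ball, which for volume at most $d^4$ is supported on $O(1)$ low levels and whose neighborhood is easily seen to have size $\Omega(d|A|)$ by direct binomial comparison.

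Part (3), the sharp $(1+1/(2\sqrt d))$-expansion holding up to $|A|\leq 2^{d-2}$, is the main obstacle and is considerably more delicate than (1) and (2): Cauchy--Schwarz and Bonferroni estimates lose the right order of magnitude once $|A|$ is exponential in $d$, and the constant $\sqrt d$ reflects the isoperimetric behavior of near-half-spaces in the cube. I would approach it by compressing $A$ to an initial segment in an appropriate shifted order intersected with $\cE$ (so that $|N(A)|$ only decreases), reducing the problem to Hamming-ball-shaped subsets whose neighborhoods are explicit partial sums of binomial coefficients $\binom{d}{k}$, and then extracting the $1/\sqrt d$ from Stirling's formula applied to $\binom{d}{\lfloor d/2\rfloor}$. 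In practice I expect to cite this directly from the Sapozhenko and Galvin references already listed in the statement, rather than redo the full compression argument from scratch; the point of the lemma here is to collect these three standard estimates in the exact form they will be used in the proof of \Cref{thm:main}.
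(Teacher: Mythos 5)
The paper does not actually prove this lemma: it is quoted as a known isoperimetric input with a pointer to the cited references (Galvin; Korshunov--Sapozhenko), so the only part of your plan that matches the paper is your final remark that one should cite these results. Your argument for part (1) is correct and standard: two distinct vertices of $\cE$ have at most $2$ common neighbours, so inclusion--exclusion gives $|N(A)|\geq d|A|-2\binom{|A|}{2}\geq d|A|-|A|^2$. Your Cauchy--Schwarz step for part (2) is also correct as far as it goes, but note it only yields $|N(A)|\geq d^2|A|/(d+2|A|)$, which gives the claimed $d|A|/10$ only for $|A|\leq 4.5d$, i.e.\ it covers none of the range $d\ll |A|\leq d^4$ where the statement has content.

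The genuine gap is in how you propose to finish part (2) (and, to a lesser extent, part (3)). There is no ``Harper's theorem restricted to $\cE$'' that you can invoke: Harper's vertex-isoperimetric theorem concerns the boundary of an arbitrary subset of the cube, not the neighbourhood $N(A)\subseteq\cO$ of a subset $A\subseteq\cE$, and no standard compression argument is known to preserve membership in $\cE$ while not increasing $|N(A)|$ and pushing $A$ onto an initial Hamming ball. Moreover the extremal heuristic behind your reduction is wrong: low-level balls are \emph{not} the worst case. A ball around $\mathbf{0}$ of size $\approx\binom{d}{4}$ has $|N(A)|\gtrsim\binom{d}{5}\approx (d/5)|A|$, whereas a Kruskal--Katona-extremal family of $\approx d^4$ even-weight vertices concentrated near the middle level (all even sets inside a ground set of size $d/2+O(1)$) has neighbourhood only about $d|A|/10$; these middle-level families are precisely why the constant $1/10$ appears, and they are invisible to a ``direct binomial comparison on $O(1)$ low levels.'' A correct elementary route for $d\lesssim|A|\leq d^4$ is to work level by level with shadow/up-shadow estimates (Kruskal--Katona or local LYM), or simply to cite the references as the paper does. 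For part (3), the proposed compression ``to an initial segment of a shifted order intersected with $\cE$ so that $|N(A)|$ only decreases'' is likewise not a valid known operation; the $1+\Omega(1/\sqrt{d})$ expansion up to size $2^{d-2}$ is the Korshunov--Sapozhenko estimate, whose proof is genuinely more delicate, and your decision to quote it is the right one---that is exactly what the paper itself does for all three parts.
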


\nin In particular, the above lemma ensures that $Q_d$ satisfies the expansion properties required to apply \Cref{ML}.

The key step that enables one to derive \Cref{thm:main} is the following result, which states that the measure $\hat{\mu}_\lam$ is a very close approximation to the hard-core measure $\mu_{Q_d,\lam}$.

\begin{thm}[\textit{cf.} {\cite[Lemma 14]{hypercube}}]\label{clusterapprox}
    There exists a sufficiently large absolute constant $C>0$ such that if $\lam\geq C\log^2 d/d^{1/2}$ and $\lam$ is bounded as $d\rightarrow\infty$, then we have
    \[ \left| \log Z_{Q_d}(\lam) -\log \left[ 2(1+\lam)^{2^{d-1}}\Xi\right]\right| = O\left(\exp(-2^d/d^4)\right)\]
  and
    \beq{tvbd} \left\|\mu_{Q_d,\lam}-\hat{\mu}_\lam \right\|_{TV}=O\left(\exp(-2^d/d^4)\right). \enq
\end{thm}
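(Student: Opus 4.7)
My plan is to follow the polymer model / cluster expansion framework developed in \cite{hypercube}, substituting our new container lemma (\Cref{ML}) in place of Galvin's original container lemma as the key analytic input; this substitution extends the valid range of $\lam$ from $\tilde \Omega(d^{-1/3})$ down to $\tilde \Omega(d^{-1/2})$ with essentially no other changes to the argument.

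The first step is to verify the Koteck\'y--Preiss criterion \eqref{kpcond} for the polymer model on each side $\cD \in \{\cE, \cO\}$. I would take $f(A) = g(A) = c|A|$ for a small enough absolute constant $c>0$. The sum $\sum_{A' \not\sim A} w(A') e^{f(A')+g(A')}$ is bounded by first paying a factor of $O(|A| \cdot d^2)$ to specify a vertex of $A'$ within distance $2$ of $A$ in $Q_d$, and then stratifying by $(a, g) = (|[A']|, |N(A')|)$. For each stratum, the relevant weighted sum over $A'$ is controlled by \eqref{target} of \Cref{ML}, where the hypercube isoperimetry provides $g - a = \Omega(a/\sqrt{d})$. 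Under the assumption $\lam \geq C \log^2 d / \sqrt{d}$, the resulting series in $(a,g)$ is geometric and bounded by $c|A|$ when $c$ is small enough. From \eqref{kpbd}, applying this with the same $f, g$ yields the tail bound $\sum_{\Gamma \not\sim A} |w(\Gamma)| e^{g(\Gamma)} \leq c|A|$ and in particular absolute convergence of the cluster expansion $\log \Xi = \sum_\Gamma w(\Gamma)$.

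The second step is the partition function estimate. For $\cD \in \{\cE, \cO\}$, let $Z_\cD^{\mathrm{poly}}$ denote the sum of $\lam^{|I|}$ over $I \in \cI(Q_d)$ such that every 2-linked component of $I \cap \cD$ has closure of size at most $2^{d-2}$. Writing $I \cap \cD$ as a polymer configuration $\Lambda \in \Omega_\cD$ and $I \cap \cD^c$ as an arbitrary subset of $\cD^c \setminus N(\bar\Lambda)$ gives $Z_\cD^{\mathrm{poly}} = (1+\lam)^{2^{d-1}} \Xi$ for each side. By inclusion-exclusion,
\[ Z_{Q_d}(\lam) - 2(1+\lam)^{2^{d-1}} \Xi = Z_{\mathrm{neither}} - Z_{\mathrm{both}},\]
where $Z_{\mathrm{neither}}$ (resp.\ $Z_{\mathrm{both}}$) is the weighted count of $I \in \cI(Q_d)$ having a 2-linked component of closure $>2^{d-2}$ on both sides (resp.\ on neither side). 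The term $Z_{\mathrm{both}}$ forces the minority side $\cD$ to have $|I \cap \cD| < 2^{d-2}$ while $\cD^c$ hosts a 2-linked component $A$ of closure $\geq 2^{d-2}$; a direct application of \Cref{ML} to sum $\lam^{|A|}$ over such large components (with $g \geq 2^{d-2}$) yields savings of order $\exp(-\Omega(2^d \log^2 d / d))$, a factor much smaller than $\exp(-2^d/d^4)$. For $Z_{\mathrm{neither}}$, two applications of \Cref{ML} -- one per side -- yield an even smaller contribution.

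Finally, the total variation bound \eqref{tvbd} follows from the same decomposition applied at the level of measures. On independent sets with a polymer configuration on at least one side, the density of $\mu_{Q_d, \lam}$ agrees with that of $\hat\mu_\lam$ up to the normalization ratio $Z_{Q_d}(\lam)/[2(1+\lam)^{2^{d-1}} \Xi] = 1 + O(e^{-2^d/d^4})$ from the first part of the theorem, contributing at most $O(e^{-2^d/d^4})$ to the TV distance; on the remaining independent sets the $\mu_{Q_d, \lam}$-mass is already bounded by the $Z_{\mathrm{neither}}$ estimate and the $\hat\mu_\lam$-mass is zero. The main technical obstacle lies in the Koteck\'y--Preiss verification, where one must track constants carefully so that the geometric decay from \Cref{ML} dominates the entropy cost of specifying a 2-linked set near $A$; this is precisely where the improved lower bound $\lam \geq C\log^2 d/\sqrt{d}$ is used, and with the new container lemma in hand the rest of the argument parallels the proof of \cite[Lemma 14]{hypercube}.
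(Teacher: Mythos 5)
Your overall architecture (verify the Koteck\'y--Preiss criterion for the polymer models on $\cE$ and $\cO$ using \Cref{ML} as the new input, then compare $Z_{Q_d}(\lam)$ with $2(1+\lam)^{2^{d-1}}\Xi$ and deduce the TV bound as in \cite{hypercube}) is the same as the paper's, but the one step the paper actually has to redo --- the Koteck\'y--Preiss verification, carried out in \Cref{lem15} --- is where your argument has a genuine gap. You take $f(A)=g(A)=c|A|$ for a constant $c>0$, so for polymers with $|[A']|=a$, $|N(A')|=g$ you must absorb a factor $e^{2ca}$. But after dividing by $(1+\lam)^{g}$, all that \Cref{ML} saves over the trivial bound is $\exp\left(-(g-a)\log^2 d/(6d)\right)$, and hypercube isoperimetry only guarantees $g-a\geq a/(2\sqrt{d})$ for $a\leq 2^{d-2}$ (an order that is essentially attained), so the per-vertex decay rate available from the container lemma is $O(\log^2 d/d^{3/2})=o(1)$. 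No constant $c$, however small, is dominated by this, so the series over large polymers is not geometric and \eqref{kpcond} with $f=g=c|A|$ cannot be verified by this route (and there is good reason to believe it is simply false for this polymer model when $\lam$ is bounded). Moreover \Cref{ML} carries the prefactor $|Y|=2^{d-1}$, which must also be absorbed. This is exactly why the paper takes the much smaller decorations $f(A)=|A|/d^{3/2}$ and $g(A)=\gamma(d,|A|)$, splits at $|A|=d/10$ and $|A|=d^4$, reuses the first-moment/isoperimetry estimates of \cite{hypercube} (valid already for $\lam=\tilde\Omega(1/d)$) for $|A|\leq d^4$, and invokes \Cref{ML} only for $a>d^4$, where $a/d^{3/2}\gg d$ kills the $2^d$ factor; the resulting tail bound $\gamma(d,k)=k/d^{3/2}$ is still strong enough to give the $O(\exp(-2^d/d^4))$ errors in the statement.

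Your second and third steps are the right shape (and the paper does not redo them: once \Cref{lem15} is established it cites the derivation of Lemma 14 in \cite{hypercube}), but as written they are muddled. By your own definition, $Z_{\mathrm{both}}$ consists of independent sets all of whose $2$-linked components have closure at most $2^{d-2}$ on \emph{both} sides; such sets contain no large component, so ``a direct application of \Cref{ML} to sum over such large components'' does not apply to them --- yet this doubly-counted term is the genuinely nontrivial one, handled in \cite{hypercube} by polymer-model estimates rather than by a direct container-lemma application. Conversely, the term you call $Z_{\mathrm{neither}}$ (a component of closure exceeding $2^{d-2}$ on each side) vanishes identically: if $|[A]|>2^{d-2}$ then $|[A]|+|N(A)|>2^{d-1}$, and for such components $A\subseteq\cE\cap I$, $B\subseteq\cO\cap I$ of an independent set one checks $([A]\cup N(A))\cap([B]\cup N(B))=\emptyset$, which is impossible by a volume count in $Q_d$. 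So while the high-level route is the intended one, the key quantitative step fails as proposed and needs to be replaced by the argument of \Cref{lem15}, and the $Z$-comparison needs to be repaired along the lines of \cite{hypercube}.
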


Just like \cite[Lemma 14]{hypercube}, the proof of the above result relies on establishing convergence of the cluster expansion for $\log \Xi$ via the Koteck\'y-Preiss criterion. Crucially, we use \Cref{ML} in order to do this, replacing the use of Galvin's original container lemma as it appears in \cite{hypercube}. For brevity, we will henceforth assume without mention that $d$ is sufficiently large for all the bounds we state to hold, and $\lam$ satisfies $\lam\geq C\log^2 d/d^{1/2}$.

\begin{lem}[\textit{cf.} {\cite[Lemma 15]{hypercube}}]\label{lem15}
    For integers $d, k\geq 1$, let
    \[\gamma(d, k) = \begin{cases*}
        \log (1+\lam)(dk-3k^2) - 7k\log d& if $k\leq \frac{d}{10}$, \\
        \frac{d\log(1+\lam)k}{20}& if $\frac{d}{10}<k\leq d^4$,\\
        \frac{k}{d^{3/2}}& if $k>d^4$.
    \end{cases*} \]
    Then,
    \beq{kpbdcube} \sum_{\substack{\Gamma\in\cC \\ \|\Gamma\|\geq k}} |w(\Gamma)| \leq d^{-3/2}2^{d-1}e^{-\gamma(d, k)}. \enq
    In particular, for $k$ fixed and $d$ sufficiently large,
    \beq{kptailscube} |T_k - \log \Xi| \leq d^{7k-3/2}2^d(1+\lam)^{-dk+3k^2}. \enq
\end{lem}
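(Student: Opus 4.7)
The plan is to follow the proof of \cite[Lemma 15]{hypercube} with the critical substitution of our refined container lemma \Cref{ML} wherever Galvin's original container lemma appeared; this substitution is precisely what enables the improved range $\lam \geq C\log^2 d/d^{1/2}$. The first step is to verify the Koteck\'y--Preiss criterion \eqref{kpcond} for appropriately chosen $f,g:\cP\to[0,\infty)$. Following \cite{hypercube}, I would set $g(A)=|[A]|/d^{3/2}$ and let $f(A)$ be of the order of $\gamma(d,|[A]|)$ (with a small multiplicative safety factor so that the comparison in \eqref{kpcond} closes up). The work then reduces to proving, for every polymer $A\in\cP$, a bound of the form $\sum_{A'\not\sim A} w(A') e^{f(A')+g(A')} \leq f(A)$, by summing over $A'$ first by pinning a vertex $v$ in the second neighbourhood of $[A]$, then by its parameters $(a,g)=(|[A']|,|N(A')|)$, and finally over polymers in $\cG(a,g)$ rooted at $v$.

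The three regimes defining $\gamma(d,k)$ arise naturally from the three tools used to bound this inner sum. For small polymers ($|[A']|\le d/10$) I would apply item (1) of the isoperimetric lemma to get $w(A')\leq (1+\lam)^{-d|[A']|+|[A']|^{2}}\lam^{|A'|}$, then combine with the standard enumeration of $2$-linked sets of a given size rooted at a vertex in a graph of maximum degree $d^2$ (each such set has at most $(ed^2)^{|[A']|}$ choices, producing the $7k\log d$ slack in the first branch of $\gamma$). For medium polymers ($d/10<|[A']|\le d^{4}$) I would use item (2) of the isoperimetric lemma, which gives exponential decay of $w(A')$ of order $(1+\lam)^{-d|[A']|/10}$ and easily dominates the combinatorial count. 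For large polymers ($|[A']|>d^{4}$) I would invoke \Cref{ML}, using that $Q_d$ is $1$-approximately $(d,d)$-biregular with $m_\varphi=\Omega(\varphi d)$ and $g-a=\Omega(g/d)$ (via item (3) of the isoperimetric lemma), so the hypotheses of \Cref{ML} are satisfied. After dividing the resulting bound by $|Y|=2^{d-1}$ we obtain the per-vertex estimate $\sum_{A'\in\cG(a,g),\,A'\ni v}w(A')\leq e^{-(g-a)\log^2 d/(6d)}$, which is precisely what is needed to match the $k/d^{3/2}$ decay in the third branch of $\gamma$.

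With the Koteck\'y--Preiss criterion in hand, \eqref{kpbd} yields $\sum_{\Gamma\not\sim A}|w(\Gamma)|e^{g(\Gamma)}\leq f(A)$ for every polymer $A$. To deduce the tail bound \eqref{kpbdcube} I would distinguish a canonical root polymer in each cluster $\Gamma$ with $\|\Gamma\|\geq k$ (say, the one containing the minimum vertex under a fixed ordering), sum over possible locations of this root vertex in $\cD$ (producing the $2^{d-1}$ factor), and then apply \eqref{kpbd} with a slight strengthening of $f$ so that the surplus headroom delivers the factor $e^{-\gamma(d,k)}$. Finally \eqref{kptailscube} follows from $|T_k-\log\Xi|=\bigl|\sum_{j\geq k}L_j\bigr|\leq \sum_{\|\Gamma\|\geq k}|w(\Gamma)|$, upon substituting the first branch of $\gamma$ (valid for fixed small $k$) into \eqref{kpbdcube}.

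The main obstacle will be the bookkeeping in the large-polymer regime. \Cref{ML} is a global bound over all $2$-linked subsets of $X$ with prescribed $(a,g)$, whereas the Koteck\'y--Preiss criterion requires a sum pinned at a single vertex near $[A]$; extracting the latter from the former requires a careful double-counting step that also respects the constraint that $A'\not\sim A$. The template for this double counting is in \cite{hypercube} and should go through once one replaces Galvin's container estimate with \Cref{ML}; the novelty is purely in checking that the parameters $(\delta,\delta',\delta'')$ and the constant $C$ in \Cref{ML} line up with the new lower bound on $\lam$, and that the exponent $(g-a)\log^2 d/(6d)$ provided by \Cref{ML} still dominates the combinatorial factors appearing on the left-hand side of the Koteck\'y--Preiss inequality.
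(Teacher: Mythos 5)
Your overall architecture is the same as the paper's: verify the Koteck\'y--Preiss condition \eqref{kpcond} by pinning a vertex $v$ and splitting into the three size regimes (re-using the $\lam=\tilde\Omega(1/d)$ computations of \cite{hypercube} for $|A|\leq d^4$ and invoking \Cref{ML} for $|A|>d^4$, whose hypotheses you correctly check via the isoperimetric lemma), then deduce the cluster tail bound and \eqref{kptailscube}. However, there is a genuine flaw in how you assign the two functions in the Koteck\'y--Preiss theorem, and it breaks the deduction of \eqref{kpbdcube}. The conclusion \eqref{kpbd} only retains the factor $e^{g(\Gamma)}$; the function $f$ appears solely on the right-hand side, evaluated at the pinned polymer. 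So the decay $e^{-\gamma(d,k)}$ must be carried by $g$, not by ``surplus headroom'' in $f$ --- surplus in $f$ never survives into \eqref{kpbd}. With your choice $g(A)=|A|/d^{3/2}$ and $f(A)\approx\gamma(d,|A|)$, pinning at a single vertex gives $\sum_{\Gamma\not\sim v}|w(\Gamma)|e^{\|\Gamma\|/d^{3/2}}\leq \gamma(d,1)\approx d\log(1+\lam)$, and restricting to $\|\Gamma\|\geq k$ only yields a saving of $e^{-k/d^{3/2}}$ with a prefactor $2^{d-1}\gamma(d,1)$, which is far weaker than $d^{-3/2}2^{d-1}e^{-\gamma(d,k)}$ in the first two branches of $\gamma$. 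The paper's (and the correct) assignment is the reverse: $f(A)=|A|/d^{3/2}$, so that $f(\{v\})=d^{-3/2}$ produces the $d^{-3/2}$ prefactor after summing over the $2^{d-1}$ choices of $v$, and $g(A)=\gamma(d,|A|)$, so that, using that $\gamma(d,k)/k$ is non-increasing, $g(\Gamma)\geq\gamma(d,\|\Gamma\|)\geq\gamma(d,k)$ for clusters of size at least $k$, which is exactly what lets you divide out $e^{-\gamma(d,k)}$. The verification of \eqref{kpcond} is essentially symmetric in $f+g$ on the left-hand side, so your regime-by-regime estimates are reusable once the roles are swapped.

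A smaller imprecision: you cannot simply divide the bound of \Cref{ML} by $|Y|$ to get a vertex-pinned estimate; \Cref{ML} bounds the sum over all of $\cG(a,g)$, and turning that into a sum over $A\ni v$ requires a symmetry/double-counting argument (and even then picks up a factor of order $a$). This is not needed: as in the paper, simply drop the constraint $A\ni v$ and keep the full $2^{d}$-type prefactor, which is harmlessly absorbed because for $a>d^4$ one has $g-a\geq a/(2\sqrt d)$ and hence $(g-a)\log^2 d/(6d)\gg d$, leaving the summable decay $e^{-a d^{-3/2}}$ needed for the $1/(3d^{7/2})$ contribution.
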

\begin{proof}
    Let $g:\cP\rightarrow[0, \infty)$ be defined by $g(A)=\gamma(d, |A|)$ and $f:\cP\rightarrow[0, \infty)$ by $f(A)=|A|/d^{3/2}$. We show that the Koteck\'y-Preiss condition \eqref{kpcond} holds, i.e. that for every $A\in\cP$,
    \beq{kpcube1} \sum_{A'\not\sim A} w(A')e^{d^{-3/2}|A'| + g(A')}\leq |A|/d^{3/2}. \enq
    To this end, we first show that for every $v\in\cE$,
    \[ \sum_{A \ni v} w(A)e^{d^{-3/2}|A|+g(A)} \leq \frac{1}{d^{7/2}},\]
    and this suffices since $A'\not\sim A$ if and only if $A'\ni v$ for some $v\in N^2(A)$ and $|N^2(A)|\leq d^2|A|$. We break up the sum according to the different cases of $\gamma(d, k)$.
    In \cite[Lemma 15]{hypercube} it is proven that
    \[ \sum_{\substack{A\ni v \\ |A|\leq\frac{d}{10}}} w(A)e^{f(A)+g(A)}\leq \frac{1}{3d^{7/2}}\]
    for sufficiently large $d$, and similarly
    \[ \sum_{\substack{A\ni v \\ d/10< |A|\leq d^4}} w(A)e^{f(A)+g(A)} \leq \frac{1}{3d^{7/2}}.\]

\nin    Indeed, the proofs of the above facts only rely on the weaker assumption that $\lam=\tilde{\Omega}(1/d)$.

    Now, for the case of $d^4<|A|\leq 2^{d-2}$, we have that $|N(A)|\geq |A|(1+1/(2\sqrt{d}))$, and so
    \begin{align*} \sum_{\substack{A\ni v\\ d^4<|A|\leq 2^{d-2}}} w(A)e^{f(A)+g(A)} &\leq \sum_{\substack{ d^4<a\leq 2^{d-2} \\ (1+1/(2\sqrt{d}))a\leq g\leq 2^{d-1}}} \sum_{\substack{A\ni v \\ |[A]|=a, |N(A)|=g}} \frac{\lam^{|A|}}{(1+\lam)^g} e^{2|A|d^{-3/2}} \\ &\leq \sum_{\substack{ d^4<a\leq 2^{d-2} \\ (1+1/(2\sqrt{d}))a\leq g\leq 2^{d-1}}} e^{2ad^{-3/2}} \sum_{\substack{A\ni v \\ |[A]|=a, |N(A)|=g}} \frac{\lam^{|A|}}{(1+\lam)^g} \\ &\leq \sum_{\substack{a>d^4 \\ g\geq (1+1/(2\sqrt{d}))a}} e^{2ad^{-3/2}} 2^d \exp\left( -\frac{(g-a)\log^2 d}{6d}\right)
    \end{align*}
    where the last inequality follows from \Cref{ML}. In the sum, we have $(g-a)\geq a/(2\sqrt{d})$ and $a>d^4$, and so
    \[ \frac{2a}{d^{3/2}} + d - \frac{(g-a)\log^2d}{6d} \leq -ad^{-3/2}\]
    for large enough $d$, and so
    \begin{align*} \sum_{\substack{A\ni v \\ d^4<|A|\leq 2^{d-2}}} w(A)e^{f(A)+g(A)} &\leq \sum_{\substack{d^4<a\leq 2^{d-2} \\ (1+1/(2\sqrt{d}))a\leq b\leq 2^{d-1}}} \exp(-ad^{-3/2}) \\ &\leq 2^d \sum_{a>d^4} \exp(-ad^{-3/2}) \leq \frac{1}{3d^{7/2}}
    \end{align*}
    for $d$ large enough. Putting the three bounds together gives \eqref{kpcube1}.

    We may now conclude like in \cite{hypercube}: applying \eqref{kpbd} for the polymer $A$ consisting of the single vertex $v$ we obtain
    \[ \sum_{\Gamma\in\cC , \Gamma\not\sim v} |w(\Gamma)|e^{g(\Gamma)} \leq d^{-3/2}.\]
    Summing over all $v$ gives
    \beq{kpcube2} \sum_{\Gamma\in\cC} |w(\Gamma)|e^{g(\Gamma)} \leq 2^{d-1}d^{-3/2}. \enq
    Since $\gamma(d, k)/k$ is non-increasing in $k$, we have, recalling that $g(\Gamma)=\sum_{A\in\Gamma} g(A)$ and $\|\Gamma\|=\sum_{A\in\Gamma} |A|$,
    \[ g(\Gamma)=\sum_{A\in\Gamma} \gamma(d, |A|)\geq \sum_{A\in\Gamma} \frac{|A|}{\|\Gamma\|} \gamma(d, \|\Gamma\|) = \gamma(d, \|\Gamma\|). \]
    Keeping only terms corresponding to clusters of size at least $k$ in inequality \eqref{kpcube2}, we have
    \[ \sum_{\substack{\Gamma\in\cC \\ \|\Gamma\| \geq k}} |w(\Gamma)| \leq d^{-3/2}2^{d-1}e^{-\gamma(d, k)}\]
    as desired.
\end{proof}

To summarize, we have extended the statement of \cite[Lemma 15]{hypercube} to the regime of $\lam=\Omega(\log^2 d/d^{1/2})$. The proof of \Cref{clusterapprox} now follows identically to the derivation of Lemma 14 in \cite{hypercube}, so we omit the details here. In particular, the following intermediate result of \cite{hypercube} holds in our extended regime for $\lam$.

\begin{lem}[\textit{cf.} {\cite[Lemma 17]{hypercube}}] \label{lem17}
    With probability at least $1-O(\exp(-2^d/d^4))$ over the random independent set $\mathbf{I}$ drawn from $\hat{\mu}_\lam$, the minority side of $\mathbf{I}$ is the defect side.
\end{lem}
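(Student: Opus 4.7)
The plan is to bound $\pr_{\hat\mu_\lam}[|\mathbf I \cap \cD| \geq |\mathbf I \cap \cD^c|]$ by $O(\exp(-2^d/d^4))$. By construction $|\mathbf I \cap \cD| = |\bar\Lambda|$ depends only on the polymer configuration $\Lambda \sim \nu_\cD$, while conditional on $\Lambda$ the non-defect contribution $|\mathbf I \cap \cD^c|$ is Binomial with $2^{d-1}-|N(\bar\Lambda)|$ trials and success parameter $\lam/(1+\lam)$. I would fix a threshold $T = 2^d/d^4$ and decompose
\[\{|\mathbf I \cap \cD| \geq |\mathbf I \cap \cD^c|\} \;\subseteq\; \{|\bar\Lambda| \geq T\} \;\cup\; \{|\bar\Lambda| < T,\; |\mathbf I \cap \cD^c| \leq T\},\]
handling the two pieces separately. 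The second piece is routine: on $\{|\bar\Lambda| < T\}$ we have $|N(\bar\Lambda)| \leq dT = 2^d/d^3 \ll 2^{d-1}$, so the Binomial $|\mathbf I \cap \cD^c|$ has mean $\mu \geq (\lam/(1+\lam)) \cdot 2^{d-2} = \Omega(2^d \log^2 d/\sqrt d)$, vastly exceeding $T$, and a standard Chernoff bound gives $\pr[|\mathbf I \cap \cD^c| \leq T \mid \Lambda] \leq \exp(-\Omega(\mu))$, well within the target.

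The first piece, $\pr_{\nu_\cD}[|\bar\Lambda| \geq T]$, requires converting the cluster-level tail bound \eqref{kpbdcube} of \Cref{lem15} into a pointwise bound on the sampled polymer mass. The natural approach is an exponential-moment argument: for the perturbed weights $\tilde w(A) = w(A) e^{t|A|}$ with $t \leq 1/d^{3/2}$ (the minimum of $\gamma(d,k)/k$, attained in the third regime of \Cref{lem15}), the tilt is dominated pointwise by $e^{g(A)}$, so the Koteck\'y--Preiss bound \eqref{kpcube2} applies to the perturbed model and gives $\log \bE_{\nu_\cD}[e^{t|\bar\Lambda|}] = \log(\tilde\Xi/\Xi) \leq 2^{d-1}/d^{3/2}$. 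A direct Markov application with $t = 1/d^{3/2}$ falls slightly short of the target, and the refinement, as in \cite[Lemma 17]{hypercube}, is to split $|\bar\Lambda|$ according to which of the three regimes of $\gamma(d,k)$ each contributing polymer lives in, and to use the regime-optimal exponential tilt in each case; the smaller polymers ($|A|\leq d/10$), which dominate $|\bar\Lambda|$ in the subcritical phase, in particular admit a much larger tilt parameter of order $\sqrt d\log^2 d$.

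The main obstacle is precisely this regime-splitting step. It is the calculation carried out in \cite[Lemma 17]{hypercube}, and its only input is the three-regime tail bound of \Cref{lem15}. Since our version of \Cref{lem15} reproduces exactly the same three-regime structure in the broader range $\lam = \tilde\Omega(d^{-1/2})$ (rather than $\lam = \tilde\Omega(d^{-1/3})$), the argument of \cite[Lemma 17]{hypercube} transfers to our setting essentially verbatim, with the only substantive change being that the use of Galvin's container lemma inside the proof of \Cref{lem15} is replaced by our refined \Cref{ML}.
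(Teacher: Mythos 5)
Your high-level route is exactly the paper's: the paper offers no new argument for this lemma, but simply observes that once \Cref{lem15} (the Koteck\'y--Preiss condition with the three-regime tail function $\gamma$) is established in the wider range of $\lam$ via \Cref{ML}, the derivation of \cite[Lemma 17]{hypercube} goes through unchanged, and your decomposition into a polymer-mass tail plus a Binomial concentration step is indeed how that derivation runs.

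There is, however, a concrete quantitative problem with the threshold $T=2^d/d^4$ in your decomposition. For the mass carried by polymers of size larger than $d^4$, the third regime of $\gamma(d,k)=k/d^{3/2}$ caps the admissible exponential tilt at $t=1/d^{3/2}$, so the best bound \Cref{lem15} can yield for that contribution is of the form $\exp\bigl(E - tT\bigr)$ with $E$ the tilted one-polymer sum; even using the sharp estimate from the proof of \Cref{lem15} (which makes $E$ negligible), this gives at best $\exp(-\Theta(2^d/d^{11/2}))$, which is \emph{not} $O(\exp(-2^d/d^4))$. Nor can you simply exclude large polymers: by \eqref{kpbdcube} the probability that $\Lambda$ contains some polymer of size exceeding $d^4$ is only $e^{-\Theta(d^{5/2})}$-ish, far larger than the target. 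So with $T=2^d/d^4$ the regime-splitting you describe closes for the small and medium polymers (where the tilt is $\Omega(\sqrt d\,\log^2 d)$) but fails in the large-polymer regime. The fix is simply to raise the threshold, e.g.\ $T=2^d/d^2$ or a small constant times $\lam 2^{d}$: the Binomial side tolerates this easily, since on $\{|\bar\Lambda|<T\}$ we still have $|N(\bar\Lambda)|\le dT\ll 2^{d-1}$ and the conditional mean of $|\mathbf I\cap\cD^c|$ is $\Omega(\lam 2^d)=\Omega(2^d\log^2 d/\sqrt d)\gg T$, while the large-polymer tail now becomes $\exp(-\Omega(T/d^{3/2}))=\exp(-\Omega(2^d/d^{7/2}))$, comfortably within $O(\exp(-2^d/d^4))$. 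With that adjustment your sketch is a correct reconstruction of the transferred argument.
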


The final ingredient we need for the proof of \Cref{thm:main} is the following result from \cite{struct} (or, more precisely, the statement of this result for our extended range of $\lam$, which follows from \Cref{lem15} identically to the original).
\begin{lem}[\textit{cf.} {\cite[Lemma 11]{struct}}] \label{lem11}
    For any $\ell\geq 1$ fixed,
    \[ \sum_{\substack{\Gamma\in\cC \\ \|\Gamma\|\geq 2}} w(\Gamma)\|\Gamma\|^{\ell} =O\left(\frac{2^d d^2}{(1+\lam)^{2d}} \right).\]
\end{lem}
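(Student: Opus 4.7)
The plan is to adapt the proof of \cite[Lemma 11]{struct} directly, using \Cref{lem15} in place of the analogous tail bound used there. Split the sum by cluster size:
\[
\sum_{\|\Gamma\|\geq 2} w(\Gamma)\|\Gamma\|^\ell = 2^\ell \sum_{\|\Gamma\|=2} w(\Gamma) + \sum_{\|\Gamma\|\geq 3} w(\Gamma)\|\Gamma\|^\ell,
\]
and estimate the two pieces by very different means.

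For the size-$2$ contribution, enumerate directly the two possible types of cluster of total size $2$: (a) a single polymer $A=\{v_1,v_2\}\subseteq \cD$ on two vertices at distance $2$ in $Q_d$, for which $|N(A)|=2d-2$ since two such vertices share exactly two common neighbours; and (b) an incompatible pair $\Gamma=(A_1,A_2)$ of singleton polymers, whose Ursell factor is $-\tfrac12$. In either case there are at most $O(2^d d^2)$ clusters across the two choices of $\cD\in\{\cE,\cO\}$, each of absolute weight of order $\lam^2/(1+\lam)^{2d}$. Since $\lam$ is bounded as $d\to\infty$, this layer contributes $O(2^d d^2/(1+\lam)^{2d})$, which is the target.

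For clusters of total size $\geq 3$, combine Abel summation with the tail estimate from \Cref{lem15}. Writing $S_k:=\sum_{\|\Gamma\|\geq k}|w(\Gamma)|\leq d^{-3/2}2^{d-1}e^{-\gamma(d,k)}$ and $k^\ell-(k-1)^\ell\leq \ell k^{\ell-1}$, we get
\[
\sum_{\|\Gamma\|\geq 3}|w(\Gamma)|\|\Gamma\|^\ell \leq 3^\ell S_3 + \ell \sum_{k\geq 4}k^{\ell-1}S_k.
\]
The $k=3$ term evaluates to $O(d^{19.5}2^d/(1+\lam)^{3d-27})$ from $\gamma(d,3)=\log(1+\lam)(3d-27)-21\log d$. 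Since $\lam\geq C\log^2 d/\sqrt{d}$ forces $(1+\lam)^d$ to grow super-polynomially in $d$, this is $o(2^d d^2/(1+\lam)^{2d})$; the terms with $k\geq 4$ decay strictly faster (as $\gamma(d,k)/k$ is non-increasing in $k$) and are absorbed.

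The one delicate point is that applying \Cref{lem15} naively at $k=2$ would lose a factor of order $d^{12.5}$ (coming from the $e^{14\log d}$ slack in $\gamma(d,2)$), and is therefore far too weak to yield the desired $d^2$ exponent. This is exactly why the size-$2$ layer must be treated by the by-hand enumeration in (a) and (b), rather than via the Koteck\'y--Preiss tail estimate; once this is done, \Cref{lem15} comfortably handles everything else.
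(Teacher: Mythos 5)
Your proposal is correct and follows exactly the route the paper intends (the paper itself only defers to \cite[Lemma 11]{struct}, asserting it follows from \Cref{lem15}): the dominant size-$2$ layer is computed by direct enumeration of the two cluster types (a size-$2$ polymer with $|N(A)|=2d-2$, and an ordered incompatible pair of singletons with Ursell factor $-\tfrac12$), and the $\|\Gamma\|\geq 3$ contribution is controlled by the tail bound of \Cref{lem15}, which is precisely why the naive $k=2$ application and its $d^{12.5}$ loss must be avoided. One small repair: the absorption of the $k\geq 4$ terms should be justified by the explicit lower bound on $\gamma(d,k)$ (e.g. $\gamma(d,k)\geq 0.6\,k\,d\log(1+\lam)$ for $4\leq k\leq d/10$, and $\gamma(d,k)$ far larger in the remaining ranges, so the sum $\sum_{k\geq 4}k^{\ell-1}S_k$ is dominated by a term of order $(1+\lam)^{-2.4d}$), rather than by the non-increasing property of $\gamma(d,k)/k$, which only yields an upper bound on $\gamma(d,k)$ and so points in the wrong direction; since the needed growth is immediate from the definition of $\gamma$, this is a slip in justification, not a gap.
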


\begin{proof}[Proof of \Cref{thm:main}]
    First note that by \eqref{tvbd}, we may instead prove the result for $\hat\mu_\lam$. We let $\mathbf{I}$ denote an independent set sampled according to $\hat\mu_\lam$. Let $\cD$ denote the defect side of $\mathbf{I}$ chosen at Step~\ref{Step1} in the definition of $\hat\mu_\lam$. Without loss of generality we assume that $\cD=\cE$.
By \Cref{lem17}, we see that \eqref{expcube} reduces to estimating the typical size of $|\mathbf{I}\cap \cE|$. 

Let $\mathbf{\Lambda}\in \Omega_\cE$ denote the  configuration chosen at Step~\ref{Step2} in the definition of $\hat\mu_\lam$ and note that $|\mathbf{I}\cap \cE|= |\mathbf{\Lambda}|$. Recall that we set $\bar\Lambda:=\bigcup_{A\in\Lambda}A$. Then, as noted in \cite{struct}, we have
\begin{align} \label{eq:expclust}
\mathbb{E} |\mathbf{\mathbf{\bar \Lambda}}| = \sum_{\Gamma\in \cC} w(\Gamma)\|\Gamma\| 
\end{align}
and
\[ \text{Var} |\mathbf{\bar \Lambda}| = \sum_{\Gamma\in\cC} w(\Gamma) \|\Gamma\|^2\, . \]

Consider first the contribution to the right-hand side of~\eqref{eq:expclust} coming from clusters of size 1. There are precisely $2^{d-1}$ of them, and each of them has weight $\lam/(1+\lam)^d$.    On the other hand, by \Cref{lem11} we know that
\[ \sum_{\substack{\Gamma\in\cC \\ \|\Gamma\|\geq 2}} w(\Gamma)\|\Gamma\| =O\left(\frac{2^d d^2}{(1+\lam)^{2d}} \right),\]

\nin    we hence see that $\mathbb{E} |\mathbf{\bar \Lambda}| = (1+o(1))\frac{\lam 2^{d-1}}{(1+\lam)^d}$. Similarly we have $\text{Var} |\mathbf{\bar \Lambda}| = (1+o(1))\frac{\lam 2^{d-1}}{(1+\lam)^d}$.
      We therefore obtain \eqref{expcube} by Chebyshev's inequality.
    
    For the proof of \eqref{expbig} we note that by Step~\ref{Step3} in the definition of $\hat\mu_\lam$,  $|\mathbf{I}\cap\cO|\sim\text{Bin}\left(2^{d-1}-O\left(\frac{d\lam 2^{d-1}}{(1+\lam)^d}\right), \frac{\lam}{1+\lam}\right)$ and the result follows again by Chebyshev's inequality and noting that $|\mathbf{I}|=|\mathbf{I} \cap \cE|+|\mathbf{I}\cap \cO|$.
\end{proof}

\section{Proof of Theorem~\ref{algos}}\label{pfthmalgos}

The proof of \cite[Theorem 2]{jenssen2023approximately} also rests on the use of Galvin's container lemma. We wish to apply \Cref{ML} in order to prove \Cref{algos}, but $\alpha$-expanders might not satisfy the condition on $m_\varphi$ required for \Cref{ML} to hold. We outline how to circumvent this issue. The rest of the proof then goes through exactly as in~\cite{jenssen2023approximately} and so
we keep this section brief in order to avoid excessive repetition. 

We begin by recalling that \cite[Theorem 2]{jenssen2023approximately} establishes Theorem~\ref{algos} in the restricted range $\lam=\tilde \Omega(d^{-1/4})$. By choosing $C$ sufficiently large in the statement of Theorem~\ref{algos} we may therefore assume that $d$ is sufficiently large to support our assertions. 

We require the following lemma from \cite{jenssen2023approximately}, which enables us to construct $\varphi$-approximations as `cheaply' in this new setting as \Cref{lem.varphi} did before. For a bipartite graph $\Sigma$, recall the definition of $\cG(a,g)=\cG(a,g,\Sigma)$ from~\eqref{eq:GagDef}.

\begin{lem}[{\cite[Lemma 7]{jenssen2023approximately}}]\label{lem.ess}
	Let $\alpha>0$ be a constant and let $\Sigma=X\cup Y$ be a bipartite $d$-regular $\alpha$-expander. Let $a, g\in \mathbb{N}$. Then there is a family $\cV=\cV(a, g)\subseteq 2^Y$ with
\beq{eq.ess} |\cV|\leq |Y|\exp \left(O\left(\frac{g\log^2 d}{d}\right)\right) \enq
such that each $A\in \cG(a, g)$ has a $d/2$-approximation in $\cV$.
\end{lem}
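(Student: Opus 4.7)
The plan is to adapt the algorithmic construction underlying \Cref{lem.varphi} to the $\alpha$-expander setting. The essential difference is that an $\alpha$-expander need not satisfy the local expansion $m_\varphi \ge \delta''\varphi d$ assumed there, so one must instead exploit the global expansion $|N(A)| \ge (1+\alpha)|A|$ (which gives $g \ge (1+\alpha)a$ and hence $g-a = \Omega(g)$) together with the 2-linked structure of $A$.

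Fix a linear order $\prec$ on $V(\Sigma)$. For each $A \in \cG(a,g)$, I would construct a $d/2$-approximation $F$ via the following two-phase procedure. \emph{Phase 1} selects a canonical seed $v_0$, namely the $\prec$-smallest vertex of $N(A)$; this accounts for the $|Y|$ factor. Initialize $F = \{v_0\}$, and iterate: while there exists $u \in [A]$ with $d_{N(A) \sm F}(u) > d/2$, choose the $\prec$-smallest such $u$ lying at distance at most $2$ in $\Sigma$ from a previously chosen vertex, and update $F \leftarrow F \cup N(u)$. At termination, $F \sub N(A)$ by construction and every $u \in [A]$ satisfies $d_F(u) \ge d/2$, so $N(F) \supseteq [A]$. \emph{Phase 2} then enforces $N(A)^{d/2} \sub F$ by explicitly adding every $y \in N(A) \sm F$ with $d_{[A]}(y) > d/2$; this step uses $\alpha$-expansion in an essential way, as explained below.

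To bound $|\cV|$: in Phase 1 each iteration covers at least $d/2$ previously uncovered vertices of $N(A)$, so the total iteration count is at most $2g/d$. Because $A$ is 2-linked and each newly chosen $u$ is at distance $2$ in $\Sigma$ from the current $F$, one may encode $u$ using an $O(\log d)$-bit pointer to an anchor vertex in $F$ (via the BFS scaffold recorded so far) together with $O(\log d)$ bits for a local offset, yielding $O(\log^2 d)$ bits per iteration. Hence Phase 1 contributes $O((g/d)\log^2 d)$ bits beyond the seed. For Phase 2, the $\alpha$-expansion forces the number of vertices $y \in N(A) \sm F$ with $d_{[A]}(y) > d/2$ surviving the end of Phase 1 to be at most $O(g/d)$ (here one double-counts edges from $N(A) \sm F$ into $[A]$ and exploits $g - a = \Omega(g)$), and each such $y$ is specified with $O(\log d)$ bits via an anchor in $F$. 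Adding these contributions gives $\log |\cV| \le \log|Y| + O((g/d)\log^2 d)$, matching \eqref{eq.ess}.

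The main obstacle is twofold. First, keeping the Phase 1 per-iteration cost at $O(\log^2 d)$ rather than the naive $O(\log g)$ (which arises from letting $u$ range over all of $[A]$ or over $N^2(F)$); the resolution is to restrict $u$ to vertices closest to $F$ in $\Sigma$ and to use the 2-linked structure of $A$ as a BFS scaffold so that each new $u$ carries only $O(\log d)$ routing bits relative to an anchor. Second, and more delicate, is the Phase 2 step: in contrast to the proof of \Cref{lem.varphi}, the inclusion $N(A)^{d/2} \sub F$ is not automatic from the termination condition of Phase 1, and a naive accounting would lose a factor of $g$. This is precisely where the global $\alpha$-expansion enters: it caps the size of $N(A)^{d/2} \sm F$ at the end of Phase 1, making Phase 2 affordable.
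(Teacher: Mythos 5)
You should first note that the paper does not prove \Cref{lem.ess} at all: it is quoted from \cite{jenssen2023approximately}, whose argument is a covering argument (two applications of \Cref{lem.LS} plus the linked-set count of \Cref{linked}), not an adaptive container algorithm. Measured against that, your Phase 1 is broadly in the right spirit, but Phase 2 contains a genuine gap. You claim that at the end of Phase 1 the set $B=\{y\in N(A)\setminus F:\ d_{[A]}(y)>d/2\}$ has size $O(g/d)$ ``by double-counting edges from $N(A)\setminus F$ into $[A]$ and using expansion''. The double count only gives this: termination of Phase 1 yields $e([A],N(A)\setminus F)\le a\,d/2$, and each $y\in B$ contributes more than $d/2$ such edges, so $|B|<a$, which is $\Theta(g)$ for an $\alpha$-expander; note that $\alpha$-expansion does not enter this inequality at all, which is a warning sign. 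Nor is the bound $O(g/d)$ true in general: the termination rule constrains only the degrees of vertices of $[A]$ into $F$, and (say, when $\Sigma[[A]\cup N(A)]$ is close to $(d,d/(1+\alpha))$-biregular with $\alpha<1$) Phase 1 can halt with $F$ covering only a $(1-\varepsilon)$-fraction of $N(A)$ while essentially all of $N(A)$ lies in $N(A)^{d/2}$, so that $|B|=\Theta(g)$. Explicitly listing $\Theta(g)$ vertices costs $\exp(\Theta(g))$, which destroys the bound \eqref{eq.ess}; so as written the construction does not produce $d/2$-approximations within the stated budget.

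The repair is not to list the leftover vertices of $N(A)^{d/2}$ but to certify them by a small cover, exactly as in the cited proof: since every $y\in N(A)^{d/2}$ has more than $d/2$ neighbours in $[A]$, \Cref{lem.LS} gives $T_1\subseteq[A]$ covering $N(A)^{d/2}$ with $|T_1|\le (2a/d)(1+\log d)$, and one puts $N(T_1)\subseteq N(A)$ into the container; together with a second cover $T_2\subseteq N(A)$ of $[A]$ (size at most $(g/d)(1+\log d)$) one can take $F'=N(T_1)\cup T_2$, which is a $d/2$-approximation, and the certificate $T_1\cup T_2$ is $O(1)$-linked (each of its vertices is within distance $1$ of $[A]$, each vertex of $[A]$ is within distance $1$ of $T_2$, and $[A]$ is $2$-linked), so \Cref{linked} gives the count $|Y|\exp(O(g\log^2 d/d))$ — making Phase 1 unnecessary and, incidentally, never using expansion. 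Two smaller problems in your Phase 1 would also need fixing: with the ``distance at most $2$'' restriction the algorithm can stall, since the remaining heavy vertices may all be at distance $4$ from the chosen set (a non-heavy vertex of $[A]$ has a neighbour in $F$, so lies within distance $2$ of the chosen set, and the nearest heavy vertex beyond it is within two more $\Sigma$-steps); and the ``$O(\log d)$-bit pointer to an anchor in $F$'' is not justified, as $|F|$ can be of order $g$ — the correct accounting is that the chosen vertices together with $v_0$ form an $O(1)$-linked set of size $O(g/d)$, counted via \Cref{linked}, or a binomial count over a ground set of size $\mathrm{poly}(d)\cdot g$ as in the proof of \Cref{lem.varphi}.
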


Using this in place of \Cref{lem.varphi}, we obtain the following container lemma.

\begin{lem}\label{lem.algos}
    For every constant $\alpha>0$ there exists a constant $C>0$ such that for $\lam\geq \frac{C\log^2 d}{d^{1/2}}$ the following holds.
    Let $\Sigma=X\cup Y$ be a bipartite $d$-regular $\alpha$-expander. Then, for any $1\leq a\leq |X|/2$ and $g\geq 1$, we have:
    \[ \sum_{A\in \cG(a, g)} \lam^{|A|} \leq |Y|(1+\lam)^g \exp(-\Omega((g-a)/d)). \]
\end{lem}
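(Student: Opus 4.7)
The plan is to follow the proof of \Cref{ML} almost verbatim, with \Cref{lem.ess} replacing \Cref{lem.varphi} to produce the initial coarse containers ($\varphi$-approximations). The key observation justifying this substitution is that none of \Cref{heart}, \Cref{lem.psi}, \Cref{lem.heart}, \Cref{small.F}, or \Cref{lem_large F} invoke the local isoperimetric hypothesis on $m_\varphi$: the parameter $m_\varphi$ enters the entire argument only through \Cref{lem.varphi}. Hence it suffices to substitute the cost bound from \Cref{lem.ess} for \eqref{v.total} and to replace the assumption $g - a \geq \delta' g / d_Y$ of \Cref{ML} by the stronger global bound $g - a \geq \alpha a \geq \tfrac{\alpha}{1+\alpha} g$, which follows by applying the $\alpha$-expansion property to $[A]$ (note $|[A]| = a \leq |X|/2$ and $N([A]) = N(A)$).

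Concretely, I would first apply \Cref{lem.ess} with $\varphi = d/2$ to obtain a family $\cV = \cV(a,g)$ of $d/2$-approximations with $|\cV| \leq |Y| \exp(O(g \log^2 d / d))$. For each $F' \in \cV$, I would then apply \Cref{heart} with $\delta = 1$ and $d_X = d_Y = d$ to obtain a family $\cF = \bigcup_{i=0}^\kappa \cF_i(a, g, F')$ of $(d/(2^i \gamma), d/\gamma)$-approximating pairs, and then invoke \Cref{small.F} and \Cref{lem_large F} to obtain the uniform bound
\[
\sum_{A \sim_{i, \gamma} (F, S)} \lam^{|A|} \leq (1+\lam)^g \, e^{-\bar\lam t / (3\sqrt d)}
\]
for every $i \in [0,\kappa]$ and every $(F, S) \in \cF_i$.

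Summing over $F' \in \cV$, over $i \in [0,\kappa]$, and over $(F, S) \in \cF_i$ as in \Cref{summing}, the cost contribution from $\cV$ is $\exp(O(g \log^2 d / d))$ (from \Cref{lem.ess} in place of the multi-term bound \eqref{v.total}), while the cost from $\cF$ is $\exp(O(t \log^2 d / d))$ exactly as in \eqref{w.cost}. Using the $\alpha$-expansion bound $g \leq \tfrac{1+\alpha}{\alpha} t$, the $g$-dependent cost is absorbed into a constant multiple of the $t$-dependent one. For $\lam \geq C \log^2 d / \sqrt d$ with $C = C(\alpha)$ chosen sufficiently large, the saving $e^{-\bar\lam t / (3\sqrt d)} \leq e^{-C \log^2 d \cdot t / (3 d)}$ dominates all of these polylogarithmic-in-$d$ costs by an arbitrarily large margin, leaving a residual saving of the form $\exp(-\Omega(t/d))$ and yielding the claimed bound $|Y|(1+\lam)^g \exp(-\Omega((g-a)/d))$.

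The main obstacle, and essentially the only content of the proof beyond bookkeeping, is verifying that the $m_\varphi$ hypothesis is genuinely confined to \Cref{lem.varphi}: a careful reread of \Cref{heart}, \Cref{lem.psi}, and \Cref{lem.heart} shows that their proofs only use $\varphi \leq d_Y/\delta - 1$ and the biregularity of $\Sigma$, never $m_\varphi$. A secondary point worth noting is that, because here we pay the cost $\exp(O(g \log^2 d / d))$ for the $\varphi$-approximation step (rather than the smaller costs available under the full $m_\varphi$ hypothesis), the conclusion is correspondingly weaker: one obtains the $\exp(-\Omega(t/d))$ saving stated in \Cref{lem.algos} rather than the $\exp(-\Omega(t \log^2 d / d))$ saving of \Cref{ML}, which is why the statement of \Cref{lem.algos} is formulated with the weaker exponent.
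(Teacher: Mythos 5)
Your proposal is correct and follows essentially the same route as the paper: replace \Cref{lem.varphi} by \Cref{lem.ess} (observing that the $m_\varphi$ hypothesis enters the argument only there), use $\alpha$-expansion of $[A]$ to obtain \eqref{t.ass} with $\delta'=\alpha/(1+\alpha)$, and rerun \Cref{heart} together with the unchanged reconstruction lemmas (\Cref{small.F}, \Cref{lem_large F}) with $\delta=1$, exactly as in \Cref{pfthmalgos}. Two minor points: the paper also notes the claim is trivial for $g<d$, so that $t=\Omega(d)$, which supplies the second lower bound on $t$ implicitly needed when reusing \eqref{heart.bd} and \eqref{w.cost}; and your closing explanation for the weaker exponent is not quite accurate --- since the cost from \Cref{lem.ess} is $O_\alpha(t\log^2 d/d)$, the same computation still yields a saving of $\exp(-\Omega((g-a)\log^2 d/d))$, and \Cref{lem.algos} is simply stated in the weaker form sufficient for the application.
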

\begin{proof}
Note that the statement is trivially true for $g<d$, so we henceforth consider the case of $g\geq d$. Now, under the assumptions on $\lam$, $\Sigma$ and $a$, we have $t\geq \alpha g/(1+\alpha) =\Omega(d/\log^2 d)$. Moreover, $\alpha$-expansion implies condition \eqref{t.ass} with $\delta'=\alpha/(1+\alpha)$.

We cannot apply \Cref{ML}, as we are missing the hypothesis that $m_{d/2}=\Omega(d^2)$. However, this hypothesis was only needed in the construction of the family $\cV$ of $\varphi$-approximations (i.e. in applying \Cref{lem.varphi}) and we replace this by \Cref{lem.ess}.  So let $\cV(a,g)$ be the family of $d/2$-approximations from \Cref{lem.ess}.  We now follow the rest of the proof of \Cref{ML}, i.e. we apply \Cref{heart} (with $\delta=1$, since $\Sigma$ is regular) to obtain a family $\cF(F')$ satisfying conditions (1)-(3) in the statement of \Cref{heart} for each $F'\in\cV(a,g)$. In place of~\eqref{summ} we now instead have
\beq{summ'}\sum_{A \in \cG(a,g)} \gl^{|A|} \le \sum_{F' \in \cV(a,g)} \sum_{i=0}^\kappa \sum_{(F,S) \in \cF_i} \sum_{A \sim_{i, \gamma} (F,S)}\gl^{|A|}.\enq

We may now proceed to a reconstruction step just like for the proof of \Cref{ML}. Indeed, the computations in \Cref{reconstruction} go through unchanged, and so Lemmas \ref{small.F} and \ref{lem_large F} are still valid with $\delta'=\alpha/(1+\alpha)$. Similarly, the bound \eqref{w.cost} holds identically to before, and now instead of \eqref{v.total} we bound the size of the initial $d/2$-approximations by \eqref{eq.ess}.

Set $\alpha_{min}=\min\{1/3, \alpha\}$. By putting together \eqref{w.cost}, \eqref{eq.ess}, \Cref{small.F}, and \Cref{lem_large F}, we upper bound the right-hand side of \eqref{summ'} by the product of \[|Y|(1+\gl)^ge^{-\alpha_{min} \bar\gl t/\sqrt{d}}\] and
\beq{above'}\exp\left[O_{\delta'}\left(\frac{g\log^2 d}{d}\right)\right]\cdot \exp\left[O_{\delta'}\left(\frac{\gamma t\log d}{d}\right)+O\left(\frac{t\log^2 d}{d}\right)\right].\enq
Set $C$ large enough to dominate all the implicit constants depending on $\delta'$ in the above expression, as well as $1/\alpha_{min}$ and the (absolute) constant $\gamma$. With this choice of $C$ and recalling that $t\geq \alpha g/(1+\alpha)$ and $\bar\lam\geq C\log^2 d/d^{1/2}$, we can make \eqref{above'} at most
\[\exp\left[\frac{\alpha_{min}}{2}\cdot \frac{Ct\log^2 d}{d}\right]<\exp\left[\frac{\alpha_{min}}{2}\cdot\frac{\bar\gl t}{\sqrt{d}}\right].\]
This yields the conclusion of \Cref{lem.algos}.
\end{proof}

Using \Cref{lem.algos} in the derivation of \cite[Theorem 2]{jenssen2023approximately} in place of \cite[Lemma 20]{jenssen2023approximately} and noting that the rest of the proof goes through unchanged, we obtain \Cref{algos}.

\section{Concluding remarks}\label{conclusion}
We once again highlight  \Cref{conj}, which states that a bound of the type we obtained in \Cref{ML} should hold in the larger range $\lam=\Tilde{\Omega}(1/d)$. We note however that our present methods seem to hit a natural barrier at $\lam=\Tilde{\Omega}(1/d^{1/2})$, as even the initial construction of $\varphi$-approximations is too `costly' for values of $\lam$ smaller than this. It seems that any further progress towards \Cref{conj} would require significant new ideas.

We end with a discussion on phase transitions for the hard-core model on $\Z^d$, one of the original motivations for the results mentioned in the introduction. 

Let $G$ be the nearest neighbour graph on $\mathbb Z^d$. We say a measure $\mu$ on $\mathcal{I}(G)$ is a \emph{hard-core measure at activity $\lam$}, and write $\mu\in \text{hc}(\lam)$, if it satisfies the following property: let $\mathbf I$ be sampled according to $\mu$ and let $W\subseteq \mathbb{Z}^d$ be finite; then the conditional distribution of $\mathbf I \cap W$ given $\mathbf I \cap W^c$ is $\mu$-almost surely the same as $\mu_{H,\lam}$ where $H$ is the subgraph of $G$ induced by those vertices of $W$ not adjacent to any vertex of $I \cap W^c$. The general theory of Gibbs measures -- of which the hard-core measure is a particular instance -- guarantees the existence of at least one such $\mu$ (see e.g. \cite{georgii2011gibbs}). The question of whether $\mu$ is unique depends on $\lambda$ and is a central consideration in statistical physics. If $|\text{hc}(\lam)|>1$, then we say that the hard-core model exhibits \emph{phase coexistence} at $\lambda$ and the model is said to have undergone a \emph{phase transition}. A folklore conjecture is that there exists a critical $\lam_c(d)$ such that $|\text{hc}(\lam)|=1$ for $\lam<\lam_c(d)$ and $|\text{hc}(\lam)|>1$ if $\lam>\lam_c(d)$, and this remains an important open problem. Let $\lam(d)=\sup\{\lam: |\text{hc}(\lam)|=1 \}$. It is well-known that $\lam(d)=\Omega(1/d)$ (see e.g.~\cite{vdBS}). A seminal result of Dobrushin~\cite{dobrushinhardcore} shows that $\lam(d)<\infty$ (the proof shows that $\lam(d)<C^d$ for some $C>1$). It is commonly believed that $\lam(d)=\tilde O(1/d)$, but even showing that $\lam(d)=o(1)$ eluded efforts until a breakthrough of Galvin and Kahn~\cite{GalK04} established the bound $\lam(d)=\tilde O(d^{-1/4})$ with an approach based on the method of graph containers. This bound was later improved by Peled and Samotij \cite{peledsamotij}, who showed that $\lam(d)=\tilde O(d^{-1/3})$. It would be interesting to investigate whether the ideas of the current paper can be used to improve this bound further. 

\section*{Acknowledgements}
We thank the anonymous referees for their careful reading of this paper and their helpful comments. AM would like to thank Victor Souza for helpful discussions during the preparation of this manuscript.  MJ is supported by a UK Research and Innovation Future Leaders Fellowship MR/W007320/2. JP is supported by NSF grant DMS-2324978 and a Sloan Fellowship.

\bibliographystyle{plain}
\bibliography{bibliography}

@inproceedings{balogh2018method,
  title={The method of hypergraph containers},
  author={Balogh, J. and Morris, R. and Samotij, W.},
  booktitle={Proceedings of the International Congress of Mathematicians: Rio de Janeiro 2018},
  pages={3059--3092},
  year={2018},
  organization={World Scientific}
}

@inproceedings{chen2021sampling,
      title={Sampling Colorings and Independent Sets of Random Regular Bipartite Graphs in the Non-Uniqueness Region},
      author={Chen, Z. and Galanis, A. and {\v{S}}tefankovi{\v{c}}, D. and Vigoda, E.},
      booktitle = {Proceedings of the 2022 Annual ACM-SIAM Symposium on Discrete Algorithms (SODA)},
    chapter = {},
    pages = {2198-2207},
  year={2022}
}

@inproceedings{dyer2000relative,
  title={On the relative complexity of approximate counting problems},
  author={Dyer, M. and Goldberg, L. A. and Greenhill, C. and Jerrum, M.},
  booktitle={International Workshop on Approximation Algorithms for Combinatorial Optimization},
  pages={108--119},
  year={2000},
  organization={Springer}
}

@article{galanis2021fast,
  title={Fast mixing via polymers for random graphs with unbounded degree},
  author={Galanis, A. and Goldberg, L. A. and Stewart, J.},
  journal={Information and Computation},
  pages={104894},
  year={2022},
  publisher={Elsevier}
}

@article{friedrich2020polymer,
  title={Polymer Dynamics via Cliques: {N}ew Conditions for Approximations},
  author={Friedrich, T. and G{\"o}bel, A. and Krejca, M. S. and Pappik, M.},
  journal={Theoretical Computer Science},
  year={2023},
    volume={942},
    pages={230-252},
}

@inproceedings{galanis2020fast,
  title={Fast Algorithms for General Spin Systems on Bipartite Expanders},
  author={Galanis, A. and Goldberg, L. A. and Stewart, J.},
  booktitle={45th International Symposium on Mathematical Foundations of Computer Science (MFCS 2020)},
  year={2020},
  organization={Schloss Dagstuhl-Leibniz-Zentrum f{\"u}r Informatik}
}

@article{chen2021fast,
  title={Fast algorithms at low temperatures via {M}arkov chains},
  author={Chen, Z. and Galanis, A. and Goldberg, L. A. and Perkins, W. and Stewart, J. and Vigoda, E.},
  journal={Random Structures \& Algorithms},
  volume={58},
  number={2},
  pages={294--321},
  year={2021},
  publisher={Wiley Online Library}
}

@inproceedings{cannon2020counting,
  title={Counting independent sets in unbalanced bipartite graphs},
  author={Cannon, S. and Perkins, W.},
  booktitle={Proceedings of the Fourteenth Annual ACM-SIAM Symposium on Discrete Algorithms},
  pages={1456--1466},
  year={2020},
  organization={SIAM}
}

@inproceedings{liao2019counting,
  title={Counting Independent Sets and Colorings on Random Regular Bipartite Graphs},
  author={Liao, C. and Lin, J. and Lu, P. and Mao, Z.},
  booktitle={Approximation, Randomization, and Combinatorial Optimization. Algorithms and Techniques (APPROX/RANDOM 2019)},
  year={2019},
  organization={Schloss Dagstuhl-Leibniz-Zentrum fuer Informatik}
}

@book{georgii2011gibbs,
  title={Gibbs measures and phase transitions},
  author={Georgii, H. O.},
  year={2011},
  publisher={Walter de Gruyter GmbH \& Co. KG, Berlin}
}

@article{kahn2001entropy,
  title={An entropy approach to the hard-core model on bipartite graphs},
  author={Kahn, J.},
  journal={Combinatorics, Probability and Computation},
  volume={10},
  number={3},
  pages={219--237},
  year={2001},
  publisher={Cambridge University Press}
}

@article{jenssen2023approximately,
  title={Approximately counting independent sets in bipartite graphs via graph containers},
  author={Jenssen, M. and Perkins, W. and Potukuchi, A.},
  journal={Random Structures \& Algorithms},
  volume={63},
  number={1},
  pages={215--241},
  year={2023},
  publisher={Wiley Online Library}
}

@article{galvin2006slow,
  title={Slow mixing of {G}lauber dynamics for the hard-core model on regular bipartite graphs},
  author={Galvin, D. and Tetali, P.},
  journal={Random Structures \& Algorithms},
  volume={28},
  number={4},
  pages={427--443},
  year={2006},
  publisher={Wiley Online Library}
}

@article{sparsesperner,
    title={On {D}edekind’s problem, a sparse version of {S}perner's theorem, and antichains of a given size in the {B}oolean lattice},
    author={Jenssen, M. and Malekshahian, A. and Park, J.},
    journal={Preprint, arXiv:2411.03400},
    year={2024}
}

@article{kronenberg2022independent,
  title={Independent sets in random subgraphs of the hypercube},
  author={Kronenberg, G. and Spinka, Y.},
  journal={Preprint, arXiv:2201.06127},
  year={2022}
}

@article{galvin.hom,
    author={Galvin, D.},
    title={ On homomorphisms from the {H}amming cube to $\mathbb{Z}$},
    journal={Israel Journal of Mathematics},
    year={2003},
    volume={138},
    pages={189-213}
}

@article{Galvin_independent,
    author={Galvin, D.},
    title={Independent sets in the discrete hypercube},
    journal={Preprint, arXiv:1901.01991},
    year={2019}
}

@article{sapokorcube,
    title={The number of binary codes with distance 2 (in {R}ussian)},
    author={Korshunov, A. D. and Sapozhenko, A. A.},
    year={1983},
    journal={Problemy Kibernetiki},
    volume={40},
    pages={111-130}
}

@article{sapocontainer,
    title={On the number of connected subsets with given cardinality of the boundary in bipartite graphs (in {R}ussian)},
    author={Sapozhenko, A. A.},
    year={1987},
    journal={Metody Diskretnovo Analiza},
    volume={45},
    pages={42-70}
}

@article{kleitman1982number,
  title={On the number of graphs without 4-cycles},
  author={Kleitman, D. J. and Winston, K. J.},
  journal={Discrete Mathematics},
  volume={41},
  number={2},
  pages={167--172},
  year={1982},
  publisher={Elsevier}
}

@article{HelmuthAlgorithmic2,
  title={Algorithmic {P}irogov-{S}inai theory},
  author={Helmuth, T. and Perkins, W. and Regts, G.},
  journal={Probability Theory and Related Fields},
  volume={176},
  pages={851--895},
  year={2020}
}

@article{JKP2,
  title={Algorithms for \#{BIS}-hard problems on expander graphs},
  author={Jenssen, M. and Keevash, P. and Perkins, W.},
  journal={SIAM Journal on Computing},
  volume={49},
  number={4},
  pages={681--710},
  year={2020},
  publisher={SIAM}
}

@article{saxton2015hypergraph,
  title={Hypergraph containers},
  author={Saxton, D. and Thomason, A.},
  journal={Inventiones Mathematicae},
  volume={201},
  number={3},
  pages={925--992},
  year={2015},
  publisher={Springer}
}

@article{dobrushinhardcore,
    title={The problem of uniqueness of a {G}ibbs random field and the problem of phase transition},
    author={Dobrushin, R. L.},
    year={1968},
    journal={Functional Analysis and Its Applications},
    volume={2},
    pages={302-312}
}

@inproceedings{weitz,
    author = {Weitz, D.},
    title = {Counting independent sets up to the tree threshold},
    year = {2006},
    pages = {140–149},
    booktitle={Symposium on the Theory of Computing},
}

@article{vdBS,
    title={Percolation and the hard-core lattice gas model},
    author={van den Berg, J. and Steif, J. E.},
    journal={Stochastic Processes and their Applications},
    year={1994},
    pages={179-197},
    volume={2}
}

@article{balogh2021independent,
  title={Independent sets in the middle two layers of {B}oolean lattice},
  author={Balogh, J. and Garcia, R. I. and Li, L.},
  journal={Journal of Combinatorial Theory, Series A},
  volume={178},
  pages={105341},
  year={2021},
  publisher={Elsevier}
}

@article{balogh2015independent,
  title={Independent sets in hypergraphs},
  author={Balogh, J. and Morris, R. and Samotij, W.},
  journal={Journal of the American Mathematical Society},
  volume={28},
  number={3},
  pages={669--709},
  year={2015}
}

@article{peledsamotij,
    title={Odd cutsets and the hard-core model on $\mathbb{Z}^d$},
    author={Peled, R. and Samotij, W.},
    journal={Annales de l'Institut Henri Poincar\'e Probabilit\'es et Statistiques},
    year={2014},
    volume={50},
    pages={975-998},
    number={3}
}

@article{GalK04,
    title={On phase transition in the hard-core model in $\mathbb{Z}^d$},
    author={Galvin, D. and Kahn, J.},
    journal={Combinatorics, Probability and Computing},
    volume={13},
    number={2},
    year={2004},
    pages={137-164}
}

@article{hypercube,
  title={Independent sets in the hypercube revisited},
  author={Jenssen, M. and Perkins, W.},
  journal={Journal of the London Mathematical Society},
  volume={102},
  pages={645-669},
  year={2020},
  publisher={Wiley Online Library}
}

@article{BalGLW21,
title={Intersecting families of sets are typically trivial},
author={Balogh, J. and Garcia, R. I. and Li, L. and Wagner, A. Zs.},
year={2024},
journal={Journal of Combinatorial Theory, Series B},
pages={44-67},
volume={164}
}

@article{Lov75,
title = {On the ratio of optimal integral and fractional covers},
author={Lov\'asz, L.},
journal = {Discrete Mathematics},
volume = {13},
number = {4},
pages = {383-390},
year = {1975}
}

@article{Ste74,
  title={Two Combinatorial Covering Theorems},
  author={Stein, S. K.},
  journal={Journal of Combinatorial Theory, Series A},
  year={1974},
  volume={16},
  pages={391-397}
}

@article{kp,
    title={Cluster expansion for abstract polymer models},
    author={Koteck\'y, R. and Preiss, D.},
    journal={Communications in Mathematical Physics},
    year={1986},
    volume={103},
    pages={491-498}
}

@book{knuth,
    title={The art of computer programming},
    author={Knuth, D.},
    publisher={Addison Wesley, Boston, Massachusetts},
    year={2023},
    edition={3},
    
}

@article{galvin_threshold,
    author={Galvin, D.},
    title={A threshold phenomenon for random independent sets in the discrete hypercube},
    journal={Combinatorics, Probability and Computing},
    volume={20},
    number={1},
    year={2011},
    pages={27-51}
}

@article{struct,
    author={Jenssen, M. and Perkins, W. and Potukuchi, A.},
    title={Independent sets of a given size and structure in the hypercube},
    journal={Combinatorics, Probability and Computing},
    year={2022},
    volume={31},
    number={4},
    pages={702-720}
}

\appendix
\section{Proof of Lemma \ref{lem.varphi}}\label{app:varphi}

For integer $s\geq 1$, we say a set $A\subseteq X$ is $s$-linked if $\Sigma^s[A]$ is connected where $\Sigma^s$ is the $s^{\text{th}}$ power of $\Sigma$ (i.e. $V(\Sigma^s)=V(\Sigma)$ and two vertices $x,y$ are adjacent in $\Sigma^s$ iff their distance in $\Sigma$ is at most $s$). We note that since $d(x)\leq \delta d_X$ for all $x\in X$ and $d(y)\leq d_Y$ for all $y\in Y$, the maximum degree, $\Delta(\Sigma^s)$, of $\Sigma^s$ is at most $(\gd d_X)^{\lceil s/2 \rceil}(d_Y)^{\lfloor s/2 \rfloor}$.

We require the following two standard results from the literature.

\begin{lem}[\cite{knuth}, p. 396, ex. 11]\label{linked}
    The number of $s$-linked subsets of $X$ of size $\ell$ that contain a fixed vertex is at most $(e \Delta(\Sigma^s))^{\ell-1} \le $ $\exp(s\ell\log(\delta d_Xd_Y))$.
\end{lem}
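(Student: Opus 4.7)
The result is a special case of a classical bound on the number of rooted connected subgraphs of size $\ell$ in a graph of bounded maximum degree, applied to the graph $\Sigma^s$. The plan is to proceed in three steps.

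First, I would reduce from $s$-linked subsets to rooted subtrees of $\Sigma^s$. By definition, an $s$-linked subset $A \subseteq X$ of size $\ell$ containing $v$ is exactly the vertex set of a connected induced subgraph of $\Sigma^s$ containing $v$. Every such subgraph contains at least one spanning tree rooted at $v$, and distinct subsets give distinct vertex sets; hence the number of $s$-linked subsets of size $\ell$ through $v$ is bounded by the number of rooted subtrees of $\Sigma^s$ of size $\ell$ with root $v$.

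Second, I would count the rooted subtrees via a depth-first encoding. Given a rooted tree $T$ of size $\ell$, DFS from $v$ visits each non-root vertex twice (once descending, once backtracking) and produces a closed walk of length $2(\ell-1)$. Each descent chooses one of at most $\Delta(\Sigma^s)$ unvisited neighbours, while each backtrack is forced. The sequence of descent/backtrack moves is a Dyck path of length $2(\ell-1)$, and the encoding determines $T$ uniquely. Enumerating rooted plane trees with at most $\Delta(\Sigma^s)$ ordered children per node (a generalized Catalan / ballot count) gives the bound $(e\,\Delta(\Sigma^s))^{\ell-1}$ for the number of such encodings, and hence for the number of subtrees.

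Third, I would plug in the degree bound for $\Sigma^s$. Since $\Sigma$ is $\delta$-approximately $(d_X,d_Y)$-biregular, alternating along a path of length $s$ between $X$ and $Y$ gives $\Delta(\Sigma^s) \leq (\delta d_X)^{\lceil s/2 \rceil}(d_Y)^{\lfloor s/2 \rfloor}$, so that $\log \Delta(\Sigma^s) \leq (s/2)\log(\delta d_X d_Y)$. Combining,
\[
(e\,\Delta(\Sigma^s))^{\ell-1} \leq \exp\!\bigl((\ell-1)\bigl(1 + (s/2)\log(\delta d_X d_Y)\bigr)\bigr) \leq \exp(s\ell \log(\delta d_X d_Y))
\]
for $d_X, d_Y$ sufficiently large, as claimed.

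The only genuinely delicate point is extracting the constant $e$ (rather than a weaker constant like $4$) in the subtree count: a direct Catalan-number-times-$\Delta^{\ell-1}$ bound yields only $(4\Delta)^{\ell-1}$, and the improvement to $(e\Delta)^{\ell-1}$ requires the sharper generating-function analysis of plane trees with bounded arity (equivalently, the generalized ballot formula of Knuth's exercise). For the use made of the lemma here this refinement is essentially cosmetic, since the final bound $\exp(s\ell \log(\delta d_X d_Y))$ comfortably absorbs any constant factor.
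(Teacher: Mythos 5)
The paper offers no proof of this lemma at all --- it simply cites Knuth's exercise --- and your reduction to rooted spanning subtrees of $\Sigma^s$ plus the DFS/plane-tree (generalized ballot) count is exactly the standard argument behind that citation, so your proposal is correct and takes essentially the same route. One small caveat: the intermediate claim $\log\Delta(\Sigma^s)\le (s/2)\log(\delta d_X d_Y)$ can fail for odd $s$ (since $\delta d_X\ge d_Y$ here), but the honest bound $\log\Delta(\Sigma^s)\le\lceil s/2\rceil\log(\delta d_X d_Y)$ still yields $(e\Delta(\Sigma^s))^{\ell-1}\le\exp(s\ell\log(\delta d_X d_Y))$ with room to spare, so the conclusion is unaffected.
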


For a bipartite graph with bipartition $P \cup Q$ we say that $Q'\sub Q$ \emph{covers} $P$ if each $v\in P$ has a neighbour in $Q'$.

\begin{lem}[Lov\'asz \cite{Lov75}, Stein \cite{Ste74}]\label{lem.LS} Let $G$ be a bipartite graph with bipartition $P \cup Q$, where $d(u)\ge a$ for each $u \in P$ and $d(v)\le b$ for each $v \in Q$. Then there exists some $Q' \sub Q$ that covers $P$ and satisfies
\[|Q'|\le\frac{|Q|}{a}\cdot(1+\log b).\]
\end{lem}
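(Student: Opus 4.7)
The plan is to recast the statement as a weighted set-cover problem and attack it via a greedy algorithm coupled with a linear-programming (fractional) lower bound, following the classical approach of Lov\'asz. Identify each $q \in Q$ with its neighborhood $N(q) \subseteq P$; then covering $P$ by a subset $Q' \subseteq Q$ is the same as covering all of $P$ by the family $\{N(q) : q \in Q\}$, where each element of $P$ belongs to $\ge a$ sets and each set has size $\le b$. As a preliminary step, I would observe that the uniform fractional assignment $x_q := 1/a$ is a feasible fractional cover, since for every $p \in P$ the total weight placed on $p$ is $d(p)/a \ge 1$; hence the LP-optimal fractional cover value $\tau^*$ satisfies $\tau^* \le |Q|/a$.

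Next I run the greedy algorithm: at each step pick $q \in Q$ covering the maximum number of currently uncovered elements of $P$, and stop when all of $P$ is covered. Write $q_1, \ldots, q_k$ for the chosen sequence (so $Q' := \{q_1, \ldots, q_k\}$) and $C_i \subseteq P$ for the set of elements newly covered at step $i$; the $C_i$ partition $P$. Assign to each $p \in C_i$ a ``price'' $\pi_p := 1/|C_i|$, so that the total price equals the number of greedy steps: $\sum_{p \in P}\pi_p = k = |Q'|$. The core of the argument is then the pricing lemma: for every $q \in Q$,
\[
\sum_{p \in N(q)}\pi_p \;\le\; H(|N(q)|) \;\le\; H(b), \qquad \text{where } H(n) := 1 + \tfrac12 + \cdots + \tfrac1n \le 1 + \log n.
\]

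Granting the pricing lemma and combining it with the feasible fractional cover $x_q = 1/a$, swapping the order of summation gives
\[
|Q'| \;=\; \sum_{p \in P}\pi_p \;\le\; \sum_{p \in P}\pi_p\sum_{q\,:\,p \in N(q)} x_q \;=\; \sum_{q \in Q} x_q \sum_{p \in N(q)}\pi_p \;\le\; H(b)\cdot\frac{|Q|}{a} \;\le\; (1 + \log b)\cdot\frac{|Q|}{a},
\]
which is the claimed bound. To prove the pricing lemma, fix $q \in Q$ and order the elements of $S := N(q)$ as $e_1, \ldots, e_{|S|}$ in the order they are covered by greedy (breaking ties arbitrarily). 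If $e_j$ is covered in step $i$, then at the start of that step the elements $e_j, e_{j+1}, \ldots, e_{|S|}$ are all still uncovered, so $S$ contributes at least $|S| - j + 1$ uncovered elements; since greedy chose $q_i$ rather than $q$, the gain $|C_i|$ is at least $|S| - j + 1$. Therefore $\pi_{e_j} \le 1/(|S| - j + 1)$, and summing over $j$ yields $H(|S|)$. The only substantive obstacle is this pricing step, which is the standard dual-fitting analysis of greedy set cover; everything else is LP-duality bookkeeping.
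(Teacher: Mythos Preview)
Your proof is correct; it is the standard greedy/dual-fitting argument for the set-cover approximation ratio due to Lov\'asz and Stein. The paper does not supply its own proof of this lemma but merely cites \cite{Lov75,Ste74}, so there is nothing further to compare.
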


Now fix an arbitrary set $A\in \mathcal{G}(a,g)$ and set $p=\frac{\gamma'\log d_X}{\varphi d_X}$.

\begin{claim}[{\textit{cf.} \cite[Claim 1]{BalGLW21}}]
    There exists a set $T_0\subseteq N(A)$ such that
    \beq{claim1} |T_0|\leq 3gp, \enq
    \beq{claim2} e(T_0, X\setminus [A]) \leq 3wp, \enq
    and
    \beq{claim3} |N(A)^{\varphi}\setminus N(N_{[A]}(T_0))| \leq 3g\exp(-pm_{\varphi}).\enq
\end{claim}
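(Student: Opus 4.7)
My plan is to prove this by a standard probabilistic argument: sample a random set $T \subseteq N(A)$ by including each vertex of $N(A)$ independently with probability $p$, show that the three target quantities have small expectations, and then invoke averaging to extract a deterministic $T_0$.

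The first two expectation bounds are straightforward. By linearity, $\mathbb{E}[|T|] = gp$, giving the target mean for \eqref{claim1}. For \eqref{claim2}, one needs the observation already used in the proof of Lemma~\ref{lem.psi}: since every edge incident to $[A]$ lies in $\nabla([A], N(A))$ and $\sum_{v \in [A]} d(v) \geq a d_X$ while $\sum_{y \in N(A)} d(y) \leq g d_Y$, we have $|\nabla(N(A), X \setminus [A])| \leq g d_Y - a d_X = w$. Each such edge contributes to $e(T, X\setminus [A])$ precisely when its endpoint in $N(A)$ is sampled, so $\mathbb{E}[e(T, X\setminus [A])] \leq pw$.

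The key step is \eqref{claim3}. Fix $u \in N(A)^\varphi$ and let $K := N(u) \cap [A]$, so $|K| > \varphi$. I claim that $u \notin N(N_{[A]}(T))$ if and only if $T \cap N(K) = \emptyset$. Indeed, $N_{[A]}(T) = N(T) \cap [A]$, and $u$ lies in $N(N_{[A]}(T))$ exactly when some $v \in K$ has a neighbour in $T$, i.e.\ when $T$ intersects $\bigcup_{v \in K} N(v) = N(K)$. Now $K \subseteq [A]$, so by definition of the closure $N(K) \subseteq N(A)$, hence this event has probability exactly $(1-p)^{|N(K)|}$ over the random choice of $T$. Since $K \subseteq N(u)$ with $|K| > \varphi$, the definition of $m_\varphi$ gives $|N(K)| \geq m_\varphi$, and therefore
\[ \Pr\bigl[u \notin N(N_{[A]}(T))\bigr] \leq (1-p)^{m_\varphi} \leq e^{-pm_\varphi}. \]
Summing over the at most $g$ vertices in $N(A)^\varphi$ yields $\mathbb{E}\bigl[|N(A)^\varphi \setminus N(N_{[A]}(T))|\bigr] \leq g e^{-pm_\varphi}$.

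To extract a single $T_0$ satisfying all three bounds simultaneously, I would consider the non-negative random variable
\[ Z := \frac{|T|}{gp} + \frac{e(T, X\setminus [A])}{pw} + \frac{|N(A)^\varphi \setminus N(N_{[A]}(T))|}{g e^{-pm_\varphi}}, \]
whose expectation is at most $3$ by the three estimates above. Hence some realisation $T_0$ has $Z \leq 3$, and since each summand is non-negative, each of the three summands is individually at most $3$. Multiplying through gives exactly the inequalities \eqref{claim1}--\eqref{claim3}. The only real subtlety in this argument is verifying the inclusion $N(K) \subseteq N(A)$ and the size bound $|N(K)| \geq m_\varphi$ in the third step; both follow directly from the definitions of closure and of $m_\varphi$, so there is no serious obstacle here.
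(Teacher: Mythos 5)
Your proposal is correct and follows essentially the same route as the paper: a $p$-random subset of $N(A)$, the identical three expectation computations (including the identification of the event $u\notin N(N_{[A]}(T))$ with $T\cap N(K)=\emptyset$ and the bound $|N(K)|\geq m_\varphi$), followed by an averaging step. The only cosmetic difference is that you combine the three bounds via a single normalized sum with expectation at most $3$, whereas the paper invokes Markov's inequality; this is a tidy (and arguably slightly cleaner) way to do the same first-moment extraction.
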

\begin{proof}
    Construct a random subset $\mathbf{\tilde{T}}\subseteq N(A)$ by taking each $y\in N(A)$ independently with probability $p$. We then have $\mathbb{E}(|\mathbf{\tilde{T}}|)=gp$ and $\mathbb{E}(e(\mathbf{\tilde{T}}, X\setminus[A]))=e(N(A), X\setminus[A])p \leq (gd_Y-ad_X)p=wp.$

    Now, by the definition of $N(A)^\varphi$ and $m_\varphi$, for every $y\in N(A)^\varphi$ we have $|N(N_{[A]}(y))|\geq m_\varphi$, and therefore
    \begin{align*} \mathbb{E}(N(A)^\varphi \setminus N(N_{[A]}(\mathbf{\tilde{T}})))&= \sum_{y\in N(A)^\varphi} \mathbb{P}(y\not\in N(N_{[A]}(\mathbf{\tilde{T}})))=\sum_{y\in N(A)^\varphi} \mathbb{P}(\mathbf{\tilde{T}}\cap N(N_{[A]}(y)) =\emptyset) \\ &\leq g(1-p)^{m_\varphi} \leq g\exp(-pm_\varphi).
    \end{align*}
    Applying Markov's inequality, we can find a set $T_0\subseteq N(A)$ satisfying the desired conditions.
\end{proof}

Define
\[T_0'=N(A)^\varphi \setminus N(N_{[A]}(T_0)), \quad L=T_0' \cup N(N_{[A]}(T_0)), \quad \Omega=\nabla(T_0, X \setminus [A]).\]
Let $T_1 \sub N(A) \setminus L$ be a minimal set that covers $[A] \setminus N(L)$ in the graph $\Sigma$ induced on $[A] \setminus N(L) \cup N(A) \setminus L$. Let $F':=L \cup T_1$. Then $F'$ is a $\varphi$-approximation of $A$. Note that
\[|N(A) \setminus L|(\delta^{-1}d_Y-\varphi) \le e(N(A), X \setminus [A])\leq w,\]
and thus by \Cref{lem.LS} we have
\beq{claim4} |T_1|\leq \frac{|N(A)\setminus L|}{d_X}(1+\log d_Y) \leq \frac{3w \log d_Y}{d_X(\delta^{-1}d_Y-\varphi)}.\enq

Let $T\coloneqq T_0\cup T_0' \cup T_1$. By \eqref{claim1}, \eqref{claim3} and \eqref{claim4} we have
\beq{claim5} |T|\leq 3gp + 3g\exp(-pm_\varphi) + \frac{3w \log d_Y}{d_X(\delta^{-1}d_Y-\varphi)} \eqqcolon \tau.\enq

\begin{claim}[{\cite[Claim 2]{BalGLW21}}]
    $T$ is an $8$-linked subset of $Y$.
\end{claim}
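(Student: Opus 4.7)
The overall strategy is to show (i) every vertex of $T$ lies in $N(A)$ and in fact has a neighbour in $A\subseteq[A]$, (ii) every vertex $x\in[A]$ is within $\Sigma$-distance $3$ of $T$, and (iii) combine these with the $2$-linkedness of $A$ to chain together any two vertices of $T$ by jumps of length at most $8$ in $\Sigma$.

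\textbf{Step 1 (every $y\in T$ touches $A$).} By construction $T_0\subseteq N(A)$, $T_0'\subseteq N(A)^\varphi\subseteq N(A)$, and $T_1\subseteq N(A)\setminus L$, so $T\subseteq N(A)$. In particular every $y\in T$ has at least one neighbour in $A\subseteq[A]$.

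\textbf{Step 2 (every $x\in[A]$ is within distance $3$ of $T$ in $\Sigma$).} Fix $x\in[A]$. Since $F'=L\cup T_1$ is a $\varphi$-approximation, $[A]\subseteq N(F')$, so $x$ has a neighbour $y\in F'$. Split into cases:
\begin{itemize}
\item If $y\in T_1$, then $y\in T$ and $d_\Sigma(x,y)=1$.
\item If $y\in T_0'$, then again $y\in T$ and $d_\Sigma(x,y)=1$.
\item If $y\in N(N_{[A]}(T_0))$, pick $x'\in N_{[A]}(T_0)$ with $x'\sim y$ and pick $y'\in T_0$ with $y'\sim x'$. Then $x\sim y\sim x'\sim y'$ gives $d_\Sigma(x,y')\le 3$ with $y'\in T_0\subseteq T$.
\end{itemize}
In all cases $d_\Sigma(x,T)\le 3$.

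\textbf{Step 3 (assembling an $8$-linked chain).} Let $y_1,y_2\in T$ be arbitrary. By Step 1 there exist $x_1,x_2\in A$ with $x_i\sim y_i$. Since $A$ is $2$-linked, $\Sigma^2[A]$ is connected, so there is a sequence $x_1=a_0,a_1,\dots,a_k=x_2$ in $A$ with $d_\Sigma(a_i,a_{i+1})\le 2$ for every $i$. Set $z_0:=y_1$ and $z_k:=y_2$, and for $1\le i\le k-1$ use Step 2 to pick $z_i\in T$ with $d_\Sigma(a_i,z_i)\le 3$. By the triangle inequality
\[
d_\Sigma(z_i,z_{i+1})\le d_\Sigma(z_i,a_i)+d_\Sigma(a_i,a_{i+1})+d_\Sigma(a_{i+1},z_{i+1})\le 3+2+3=8,
\]
with the same bound (in fact $\le 6$) at the endpoints since $d_\Sigma(z_0,a_0)=d_\Sigma(z_k,a_k)=1$. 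Hence $z_0,z_1,\dots,z_k$ is a walk in $T$ whose consecutive vertices are at distance at most $8$ in $\Sigma$, i.e.\ a path in $\Sigma^8[T]$ from $y_1$ to $y_2$. Therefore $\Sigma^8[T]$ is connected and $T$ is $8$-linked.

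\textbf{Main obstacle.} There is no real obstacle here; the only point requiring care is handling the three sub-cases in Step 2 correctly, and in particular recognising that $T_0$ itself need not lie directly in $F'$: we recover it by walking through $N_{[A]}(T_0)$, which is exactly why the chosen constant $8$ (rather than something smaller) is the right target.
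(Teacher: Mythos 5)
Your proof is correct. The paper does not prove this claim itself but imports it from \cite{BalGLW21}, and your argument is exactly the standard one behind it: $T\subseteq N(A)$ so each $y\in T$ meets $A$, each $x\in[A]$ is within distance $3$ of $T$ (via $T_1$, $T_0'$, or a path $x\sim y\sim x'\sim y'$ through $N_{[A]}(T_0)$ back to $T_0$), and the $2$-linkedness of $A\subseteq[A]$ then chains any two vertices of $T$ by hops of length at most $3+2+3=8$; the only cosmetic omission is the degenerate case $x_1=x_2$, where $d_\Sigma(y_1,y_2)\le 2$ trivially.
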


Now, since $N(N_{[A]}(T_0))$ is determined by $T_0$ and $\Omega$, it follows that $F'$ is determined by $T_0$, $\Omega$, $T_0'$, and $T_1$. Let $\mathcal{V}$ be the collection of all sets $F'$ that we obtain in such a way from some tuple $(T_0, T'_0, T_1, \Omega)$. having started from any choice of $A\in \mathcal{G}(a,g)$. By \Cref{linked}, the number of possible choices for $T$ is at most
\[ |Y|\sum_{\ell\leq \tau} \exp(8\ell \log (\delta d_Xd_Y)) \leq |Y|\exp(16\tau\log (\delta d_Xd_Y)). \]

For a fixed $T$, the number of choices for $T_0$ and $T'_0$ is at most $2^{\tau}$ each, and this uniquely determines $T_1$. Finally, by \eqref{claim2}, for a fixed $T_0$ the number of choices for $\Omega$ is at most $\binom{3gpd_Y}{\leq 3wp}.$ Putting everything together, we have
\begin{align*}
    |\mathcal{V}| \leq |Y| \exp\big(16\tau\log (\delta d_Xd_Y)\big) \cdot 4^{\tau} \binom{3gpd_Y}{\leq 3wp} \leq |Y| \exp\big(18\tau\log (\delta d_Xd_Y)\big) \binom{3gpd_Y}{\leq 3wp} \\ \leq |Y| \exp\left( \frac{54g\gamma'\log d_X \log (\delta d_Xd_Y)}{\varphi d_X} + \frac{54g\log (\delta d_Xd_Y)}{(d_X)^{\gamma'm_\varphi/(\varphi d_X)}}+\frac{54w\log d_Y \log (\delta d_Xd_Y)}{d_X(\delta^{-1}d_Y-\varphi)} \right) \displaystyle\binom{\frac{3g\gamma'd_Y \log d_X}{\varphi d_X}}{\leq\frac{ 3w\gamma' \log d_X}{\varphi d_X}}.
\end{align*}
\newline

\end{document}